
\documentclass[12pt]{article}

\usepackage{paralist}
\usepackage{wrapfig}
\usepackage{graphicx}
\usepackage[none]{hyphenat}
\usepackage[english]{babel}
\usepackage[utf8]{inputenc}
\usepackage[T1]{fontenc}
\usepackage{tikz-cd}
\usepackage{epsfig}
\usepackage{graphicx}
\usepackage{epstopdf}

\usepackage{parskip}

\usepackage{authblk}
\usepackage{tikz-cd}

\usepackage{caption}

\usepackage{csquotes}
\usepackage{mathtools,amssymb,amsfonts,amsthm}

\usepackage[breaklinks,colorlinks=true]{hyperref}
\hypersetup{urlcolor=blue, citecolor=red}
\usepackage[capitalise]{cleveref} 

  \textheight=8.2 true in
   \textwidth=5.0 true in
    \topmargin 30pt
     \setcounter{page}{1}


\theoremstyle{plain}
\newtheorem{theorem}{Theorem}
\newtheorem*{theorem*}{Theorem}
\newtheorem{lemma}[theorem]{Lemma}
\newtheorem*{lemma*}{Lemma}
\newtheorem{proposition}{Proposition}
\newtheorem*{proposition*}{Proposition}
\newtheorem{corollary}{Corollary}
\newtheorem*{corollary*}{Corollary}
\newtheorem{remark}{Remark}

\theoremstyle{definition}
\newtheorem{definition}{Definition}
\newtheorem*{definition*}{Definition}
\newtheorem{example}{Example}
\newtheorem*{example*}{Example}

\crefname{theorem}{Theorem}{Theorems}
\Crefname{theorem}{Theorem}{Theorems}
\crefname{lemma}{Lemma}{Lemmas}
\Crefname{lemma}{Lemma}{Lemmas}
\crefname{proposition}{Proposition}{Propositions}
\Crefname{Prop}{Proposition}{Propositions}
\crefname{corollary}{Corollary}{Corollaries}
\Crefname{corollary}{Corollary}{Corollaries}
\crefname{definition}{Definition}{Definitions}
\Crefname{definition}{Definition}{Definitions}
\crefname{example}{Example}{Examples}
\Crefname{example}{Example}{Examples}
\crefname{theorem}{Theorem}{Theorems}

\newtheorem*{exercise*}{Exercise}
\crefname{exercise}{exercise}{exercises}
\Crefname{exercise}{Exercise}{Exercises}


\newcommand{\dv}[2]{\frac{\dd #1}{\dd #2}}
\newcommand{\pdv}[2]{\frac{\partial #1}{\partial #2}}

\DeclarePairedDelimiter{\set}{\lbrace}{\rbrace}

\newcommand{\Cont}{\mathcal{C}}
\newcommand{\RR}{\mathbb{R}}
\newcommand{\dd}{\mathrm{d}}

\newcommand{\Reeb}{\mathcal{R}}
\newcommand*{\ann}[1]{{#1}^{\circ}} 

\newcommand*{\contr}[1]{\iota_{#1}}
\newcommand*{\VecFields}{\mathfrak{X}}

\newcommand*{\lieD}[1]{\mathcal{L}_{#1}}

\newcommand*{\orth}[1]{{#1}^{\bot}}
\newcommand*{\orthL}[1]{\prescript{\bot}{}{#1}}

\DeclareMathOperator{\supp}{supp}

\DeclareMathOperator{\pr}{pr}

\DeclarePairedDelimiter{\gen}{\langle}{\rangle}
\DeclarePairedDelimiter{\lieBr}{\lbrack}{\rbrack}
\DeclarePairedDelimiter{\jacBr}{\lbrace}{\rbrace}

\hypersetup{urlcolor=blue, citecolor=red}

\title{The nonholonomic bracket on contact mechanical systems}

\author{
\begin{center}

Manuel de Le\'on\footnote{E-mail: \href{mailto:mdeleon@icmat.es}{mdeleon@icmat.es}}
\\ Instituto de Ciencias Matem\'aticas, Campus Cantoblanco \\
 Consejo Superior de Investigaciones Cient\'ificas
 \\
C/ Nicol\'as Cabrera, 13--15, 28049, Madrid, Spain
\\
and
\\
Real Academia de Ciencias de España.
\\
C/ Valverde, 22, 28004 Madrid, Spain.

\bigskip

V{\'\i}ctor M. Jim\'enez\footnote{E-mail: \href{mailto:victor.jimenez@mat.uned.es}{victor.jimenez@mat.uned.es}}
\\  Universidad Nacional de Educación a Distancia (UNED), \\
Departamento de Matemáticas Fundamentales. \\
Calle de Juan del Rosal 10, 28040, Madrid, Spain
\end{center}
}


\begin{document}
\maketitle

\begin{abstract}
In this paper we study contact nonholonomic mechanical sys\-tems. Firstly, we present a general framework for contact constrained dynamics in such a way that the external constraints are codified into a submanifold of the contact manifold and the reaction forces are given by a distribution. We described the constrained differential equations, and we study the existence and uniqueness of solutions of these equations. We also construct two different brackets of functions which describe the evolution of the system. Furthermore, we prove that the known constrained dynamics induced by a Lagrangian are just a particular case. In addition, we develop the Hamiltonian counterpart and, in this setting, we construct another bracket, which is a natural extension of that defined by R.J. Eden, but now it is an almost Jacobi bracket since it is not satisfy the Leibniz rule.
\end{abstract}
\tableofcontents
\section{Introduction}
\sloppy
The study of nonholonomic mechanics goes back many years (see historical reviews in \cite{deLeonRACSAM} and \cite{mamaev,Borisov2002}), but they undergo a spectacular development when they could be described in a geometrical way (see \cite{Vershik1972,Koiller1992,BLMARSKRI,BLOCH1,deLeon1996d,BatesSnia}. Indeed, the use of the tools of differential geometry is a turning point, just as happened in particle mechanics with the use of symplectic geometry and Lie groups (see \cite{Arnold,Abraham1978,Libermann1987,deLeon2011}). The  book \cite{Neimark1972} is also an extraordinary source of examples and practical applications of nonholonomic systems and is always a mandatory reference on the subject.

One of the most relevant constructions in the case of nonholonomic systems is that of a bracket providing the evolution of an observable; this bracket will obviously be non-integrable, i.e., an almost-Poisson bracket. This type of bracket was first considered in two pioneering papers by Eden \cite{eden1,eden2}, and described more clearly in a paper by van der Schaft and Maschke \cite{VdSM}. Later, de Leon et al. \cite{CaLeDi99} defined in an intrinsic way a nonholonomic bracket which turned out to be the same; to do that, they used two alternative symplectic decompositions of the tangent bundle of the phase space, constructed independently by de Leon et al. \cite{CaLeDi99} and Bates and Sniatycki \cite{BatesSnia}. Recently, de León et al \cite{deleonetal} proved that it also coincided with what could be called Eden's bracket.

These considerations refer to nonholonomic systems in the symplectic environment, both in the case of Lagrangian and Hamiltonian descriptions. Of course, the time-dependent case has also been treated. But recently a great interest has arisen in the case of contact Hamiltonian systems, which reflect a dissipative nature as opposed to the conservative nature of symplectic systems \cite{cristina,Bravetti2018a,Bravetti2017, deLeon2019a, canario,deLeon2020b}. This has led to study (we believe for the first time) the case of contact nonholonomic \cite{primero} systems, in which apart from obtaining the equations of motion, a nonholonomic bracket is defined which is again almost Jacobi, i.e. it does not satisfy the Jacobi identity but neither does it satisfy the Leibniz identity.

Our first contribution of the present paper is to construct a well-defined general framework for contact constrained dynamics by considering a contact manifold $\left(P , \eta\right)$, a constrained submanifold $M \subseteq P$ and a distribution $F$ on $P$ codifying the external constrained and the reaction forces, respectively. Here, we described the associated constrained differential equations, and we extensively study properties of uniqueness and existence of solutions of these equations. In fact, we present a specific way of projecting the solutions of the unconstrained equation into solutions of the constrained equation. In this general framework, we were able to construct two different brackets, and we study the relation between them. The first one, the non-holonomic bracket, is given by an specific \textit{almost Jacobi structure} and it provides the evolution of the observables. Furthermore, it satisfies that the constant functions on the constrained submanifold $M$ are Casimirs (so, it may be restricted to $M$). For the second one, the $P-$nonholonomic bracket, we define a new projection $P$ in such a way that, under some conditions, it also projects the unconstrained dynamics into the constrained dynamics. This projection gives rise to a different almost Jacobi structure which induces the $P-$nonholonomic bracket. Finally, we prove that both brackets coincide. Furthermore, we compare this general framework with the previously one defined for contact Lagrangian dynamics \cite{primero}, and we prove that the constrained dynamics induced by a Lagrangian on a manifold $Q$, subject to a regular distribution of kinematic constraints, is just a particular case.

We also develop the Hamiltonian counterpart of non-holonomic systems from the Lagrangian side, via the Legendre transformation. However, we also prove that it may be obtained as a particular case of the general framework. Thus, we describe in detail the non-holonomic bracket for the constrained Hamiltonian system, and its associated almost Jacobi structure. Furthermore, we are able to extend the Eden construction in the case of mechanical systems \cite{eden1,eden2}, that is, when the kinetic energy comes from a Riemannian metric, and so, we obtain a bracket {\it \`a la} Eden that coincides with the previous ones. This opens new perspectives to study the possible quantization of these systems, which has been outlined in the symplectic case. Indeed, this was the motivation for Eden's pioneering work under Dirac supervision.

The paper is structured as follows. Section 2 is devoted to give an introduction to contact Hamiltonian systems, necessary for the rest of the paper. In Section 3 we develop a general framework for nonholonomic systems, in the sense that we consider a contact manifold, a constraint submanifold and a bundle of forces acting on the system. Then, we obtain conditions for the existence and unicity of dynamical solutions of the corresponding equations. Section 4 is devoted to obtain two nonholonomic brackets using two different decompositions of the tangent bundle of the phase space, and we prove that both coincide. In Section 5 we use the above procedure just to the Lagrangian case and, thus, we re-obtain the almost Jacobi bracket given in \cite{primero}. In Section 6 we apply the general scheme developed in Section 3 to the case of contact nonholonomic systems, and, in the particular case when the kinetic energy is given by a Riemannian metric, we define the corresponding Eden bracket, which coincides with the previous ones.

\section{Contact Hamiltonian systems}\label{sec:contact_jacobi}
In this section, we will give a brief introduction to contact structures and contact Hamiltonian systems. We will start with a more general geometric structure, the so-called \textit{Jacobi manifolds}~\cite{Kirillov1976,Lichnerowicz1978,Libermann1987}.

\begin{definition}\label{def:jacobi_mfd}
  A \emph{Jacobi manifold} is a triple $(M,\Lambda,E)$, where $\Lambda$ is a bivector field (namely, a skew-symmetric contravariant 2-tensor field) and $E \in \VecFields (M)$ is a vector field, so that the following identities are satisfied:
  \begin{align}
      \lieBr{\Lambda,\Lambda} &= 2 E \wedge \Lambda \\
      \lieD{E} \Lambda &= \lieBr{E,\Lambda} = 0,
  \end{align}
  where $\lieBr{\,\cdot\, , \,\cdot}$ is the Schouten–Nijenhuis bracket.
\end{definition}

By using the Jacobi bivector $\Lambda$, we may induce a vector bundle morphism between $T^{*}M$ and $TM$.
\begin{equation}\label{ham2}
    \begin{aligned}
        \sharp_{\Lambda}: T^{*}M &\to     T M\\
        \alpha &\mapsto \Lambda(\alpha, \cdot ).
    \end{aligned}
\end{equation}

On the other hand, the Jacobi structure induces (and it is characterized by) a Lie bracket on the space of functions $\Cont^\infty(M)$, which is called \emph{Jacobi bracket}.
\begin{definition}\label{def:jac_bra}
    A \emph{Jacobi bracket} $\{\cdot,\cdot\}$ on a manifold $M$ is a map $\jacBr{\cdot,\cdot}: \Cont^\infty(M) \times \Cont^{\infty}(M) \to \Cont^\infty(M)$ that satisfies
    \begin{enumerate}
        \item $(\Cont^\infty(M),\jacBr{\cdot,\cdot})$ is a Lie algebra. That is, $\jacBr{\cdot,\cdot}$ is $\RR$-bilinear, antisymmetric and satisfies the Jacobi identity:
        \begin{equation}
            \jacBr{f,\jacBr{g,h}} + \jacBr{g,\jacBr{h,f}} + \jacBr{h,\jacBr{f,g}} = 0
        \end{equation}
        for arbitrary functions $f,g,h \in \Cont^\infty(M)$.

        \item It satisfies the following locality condition: for any $f,g \in \Cont^\infty(M)$,
        \begin{equation}
            \supp(\jacBr{f,g}) \subseteq \supp(f) \cap \supp(g),
        \end{equation}
        where $\supp(f)$ is the topological support of $f$, i.e., the closure of the set in which $f$ is non-zero.
    \end{enumerate}

Namely, $(\Cont^\infty(M), \jacBr{\cdot,\cdot})$ is a local Lie algebra in the sense of Kirillov~\cite{Kirillov1976}.
\end{definition}

\noindent{Thus, given a Jacobi manifold $(M,\Lambda, E)$ we can define a Jacobi bracket as follows}
\begin{equation}\label{eq:jac_bracket_from_mfd}
    \jacBr{f,g} =\Lambda(\dd f, \dd g) + f E(g) - g E (f).
\end{equation}
Conversely, for a Jacobi bracket $\jacBr{\cdot , \cdot }$, we may consider the vector field $E$ given by,
$$E \left( f \right) = \jacBr{1,f}, \ \forall f \in \Cont^\infty \left( M \right).$$
Then, its associated Jacobi structure is defined by Eq. (\ref{eq:jac_bracket_from_mfd}). In fact, we may prove the following result.
\begin{theorem}[\cite{deLeon2019}]\label{thm:jab_bra_characterization}
    Given a manifold $M$ and a $\RR$-bilinear map $\jacBr{\cdot,\cdot}: \Cont^\infty(M) \times \Cont^{\infty}(M) \to \Cont^\infty(M)$, the following assertions are equivalent.
    \begin{enumerate}
        \item\label{item:jac_bra_local} The map $\jacBr{\cdot,\cdot}$ is a Jacobi bracket.

        \item\label{item:jac_bra_leibniz} $(M,\jacBr{\cdot,\cdot})$ is a Lie algebra which satisfies the generalized Leibniz rule
        \begin{equation}\label{eq:mod_leibniz_rule}
            \jacBr{f,gh} = g\jacBr{f,h} + h\jacBr{f,g} +  gh E(f),
        \end{equation}
        where $E$ is a vector field on $M$.

        \item\label{item:jac_bra_mfd} There is a bivector field $\Lambda$ and a vector field $E$ such that $(M,\Lambda,E)$ is a Jacobi manifold, and $\jacBr{\cdot,\cdot}$ is given as in \cref{eq:jac_bracket_from_mfd}.
    \end{enumerate}
\end{theorem}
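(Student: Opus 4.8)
The plan is to split the triple equivalence into two manageable pieces: the purely algebraic equivalence $(\ref{item:jac_bra_leibniz})\Leftrightarrow(\ref{item:jac_bra_mfd})$, which amounts to extracting a bivector and a vector field from the generalized Leibniz rule and conversely, and the analytic equivalence $(\ref{item:jac_bra_local})\Leftrightarrow(\ref{item:jac_bra_leibniz})$, whose hard half is Kirillov's theorem on local Lie algebras. Concretely I would establish $(\ref{item:jac_bra_mfd})\Rightarrow(\ref{item:jac_bra_leibniz})$, $(\ref{item:jac_bra_leibniz})\Rightarrow(\ref{item:jac_bra_mfd})$, $(\ref{item:jac_bra_leibniz})\Rightarrow(\ref{item:jac_bra_local})$ and finally $(\ref{item:jac_bra_local})\Rightarrow(\ref{item:jac_bra_leibniz})$; these four implications close the two cycles $(\ref{item:jac_bra_local})\Leftrightarrow(\ref{item:jac_bra_leibniz})$ and $(\ref{item:jac_bra_leibniz})\Leftrightarrow(\ref{item:jac_bra_mfd})$, and all but the last require only bookkeeping.

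For $(\ref{item:jac_bra_mfd})\Rightarrow(\ref{item:jac_bra_leibniz})$ I would start from the defining formula \cref{eq:jac_bracket_from_mfd}. $\RR$-bilinearity and antisymmetry are immediate, and expanding $\dd(gh)=g\,\dd h+h\,\dd g$ together with $E(gh)=gE(h)+hE(g)$ yields the generalized Leibniz rule \cref{eq:mod_leibniz_rule} after cancellation; the Jacobi identity for $\jacBr{\cdot,\cdot}$ is then exactly the classical Lichnerowicz computation translating the Schouten conditions $\lieBr{\Lambda,\Lambda}=2E\wedge\Lambda$ and $\lieBr{E,\Lambda}=0$ into the vanishing of the Jacobiator, which I would cite rather than redo. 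For the converse $(\ref{item:jac_bra_leibniz})\Rightarrow(\ref{item:jac_bra_mfd})$ I would set $E(f):=\jacBr{1,f}$ and define $\Lambda(\dd f,\dd g):=\jacBr{f,g}-fE(g)+gE(f)$. The key point is that \cref{eq:mod_leibniz_rule} forces $f\mapsto\Lambda(\dd f,\dd g)$ to be a derivation in each slot, so $\Lambda$ descends to a well-defined skew bivector depending only on the differentials; reversing the last identity recovers \cref{eq:jac_bracket_from_mfd}, and the Jacobi manifold conditions are again read off from the Jacobi identity via the Lichnerowicz correspondence.

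The easy half of the analytic equivalence, $(\ref{item:jac_bra_leibniz})\Rightarrow(\ref{item:jac_bra_local})$, I would obtain by observing that $D_f(g):=\jacBr{f,g}+gE(f)$ is, by \cref{eq:mod_leibniz_rule}, a derivation in $g$, hence a vector field; thus $g\mapsto\jacBr{f,g}=D_f(g)-gE(f)$ is a first-order differential operator and in particular satisfies $\supp\jacBr{f,g}\subseteq\supp g$, while antisymmetry gives the symmetric inclusion, so that the locality condition holds (the Lie algebra axioms being already assumed in $(\ref{item:jac_bra_leibniz})$). The genuine obstacle is the remaining implication $(\ref{item:jac_bra_local})\Rightarrow(\ref{item:jac_bra_leibniz})$: locality only says the bracket is a sheaf morphism, and one must upgrade this to a bidifferential operator and then bound its order. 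I would invoke Peetre's theorem to conclude that each $\jacBr{f,\cdot}$ is a differential operator of locally finite order, so that $\jacBr{\cdot,\cdot}$ is bidifferential; the crux, which is precisely the content of Kirillov's local-Lie-algebra theorem \cite{Kirillov1976}, is that antisymmetry together with the Jacobi identity forces the order in each argument to be at most one. Once order $\le 1$ is established, a first-order antisymmetric bidifferential operator is automatically of the form $\Lambda(\dd f,\dd g)+fE(g)-gE(f)$, and \cref{eq:mod_leibniz_rule} follows by direct expansion. I expect the order bound to be the delicate step, since it is exactly where the interplay between locality and the Jacobi identity must be exploited.
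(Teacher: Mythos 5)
Your outline is correct and is the standard argument for this result; note that the paper itself offers no proof of \cref{thm:jab_bra_characterization}, simply citing \cite{deLeon2019}, and that reference follows the same route you describe. Your four implications all check out --- the expansion giving the generalized Leibniz rule with the $+ghE(f)$ sign, the extraction of $E(f)=\jacBr{1,f}$ and of $\Lambda$ as a skew bivector from the derivation property, and the locality of a first-order operator --- and you correctly isolate the only genuinely hard step, namely that locality plus the Jacobi identity bounds the bidifferential order by one, which is exactly Kirillov's theorem and is legitimately cited rather than reproved.
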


Given a function $f \in C^\infty(M)$, its associated \textit{Hamiltonian vector field} $X_f$ is given by
\begin{equation}\label{ham}
X_f = \sharp_{\Lambda}(df) + f E,
\end{equation}
where $\sharp_\Lambda : T^*M \longrightarrow TM$ is defined by \cref{ham2}.
\vspace{0.5cm}

The characteristic distribution ${\cal C}$ of a Jacobi manifold is spanned by all the Hamiltonian vector fields, so that
$$
{\cal C} = {\rm Im} \; \sharp_\Lambda + \langle E \rangle .
$$

As we have mentioned, a particular case of Jacobi structure is provided  by a contact form.

\begin{definition}
A \emph{contact manifold} is a pair $\left(M,\eta \right)$, with $M$ a manifold of dimension $2n+1$ and $\eta$ is a $1$-form on $M$ in such a way that $\eta \wedge {(\dd\eta)}^n$ is a volume form; then,  $\eta$ will be called a \emph{contact form}.
\end{definition}
Notice that, a contact form may be equivalently defined as a nonvanishing $1-$form $\eta$ such that, for each $q \in M$, the restriction $\dd \eta_{\vert Ker \left( \eta_{q}\right)}$, is nondegenerate, i.e., it is a \textit{symplectic form} on the vector bundle $Ker \, \eta$ (see \cite{JMS}).

\begin{remark} {\rm
Notice that some authors define a contact structure as a smooth distribution $D$ on $M$ such that it is locally generated by contact forms $\eta$ via their kernels. This definition is not equivalent to ours, and is less convenient for our purposes, as explained in~\cite{deLeon2019a}. For a detailed study on contact forms, we recommend~\cite{Libermann1987,Vaisman1994}}.

\end{remark}
\begin{example}\label{48}
Consider the canonical coordinates $\left( x^{i}, y^{j} , z\right)$ on $\mathbb{R}^{2n +1}$, and define the $1-$form $\eta$ on $\mathbb{R}^{2n + 1}$ as:
\begin{equation}
    \eta = \dd z -  y^{i}\dd x^{i}.
\end{equation}
A simple computation shows that $( \mathbb{R}^{2n+1} , \eta )$ is a contact manifold. 

We also have
\begin{equation}
    \dd \eta = \dd x^{i} \wedge \dd y^{i}.
\end{equation}
Thus, the family of vector fields $\{X_{i} , Y_{i} \}_{i}$ defined by
\begin{equation}
    X_{i} = \frac{\partial}{\partial x^{i}} + y^{i} \frac{\partial}{\partial z}, \quad
    Y_{i} = \frac{\partial}{\partial y^{i}}
\end{equation}
generates the kernel of $\eta$. Furthermore,
\[\dd \eta \left( X_{i}, X_{j} \right) = \dd \eta ( Y_{i} , Y_{j} ) = 0,
 \quad \dd \eta \left( X_{i} , Y_{j} \right) = \delta^{i}_{j}.\]

\end{example}
In fact, any contact form locally looks like the contact form defined in example~\ref{48}.
\begin{theorem}[Darboux theorem]\label{Darbouxtheorem}\cite{godbillon}
Suppose $\eta$ is a contact form on a $2n+1$-dimensional manifold $M$. For each $x \in M$ there are local coordinates $(q^i, p_i, z)$ centered at $x$
such that
\begin{equation}
	 \eta = \dd z - p_i \, \dd q^i.
\end{equation}
This system of coordinates will be called \textit{Darboux coordinates}.
\end{theorem}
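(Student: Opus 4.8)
The plan is to reduce the statement to the classical symplectic Darboux theorem by quotienting out the Reeb direction. First I would introduce the Reeb vector field $\Reeb$ of $(M,\eta)$, characterised by $\contr{\Reeb}\eta = 1$ and $\contr{\Reeb}\dd\eta = 0$; its existence and uniqueness near $x$ follow from the nondegeneracy of $\dd\eta$ on $\ker\eta$, which forces $\dd\eta$ to have a one-dimensional kernel on $T_xM$ transverse to $\ker\eta_x$ (any $v\neq 0$ in $\ker(\dd\eta)\cap\ker\eta$ would contradict nondegeneracy on $\ker\eta$). Since $\lieD{\Reeb}\eta = \contr{\Reeb}\dd\eta + \dd(\contr{\Reeb}\eta) = 0$, the Reeb flow preserves $\eta$, which is the feature I will exploit.

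Next I would straighten $\Reeb$: by the flow-box theorem choose coordinates $(t, x^1,\dots,x^{2n})$ centered at $x$ with $\Reeb = \partial/\partial t$ and let $S = \{t=0\}$ be the transverse slice. Writing $\eta = \eta_0\,\dd t + \sum_a \eta_a\,\dd x^a$, the conditions $\contr{\Reeb}\eta=1$ and $\lieD{\Reeb}\eta = 0$ give $\eta_0 = 1$ and $\partial_t \eta_a = 0$, so $\eta = \dd t + \beta$, where $\beta = \sum_a \eta_a(x)\,\dd x^a$ is the pullback of a $1$-form on $S$. Consequently $\dd\eta = \dd\beta$ is $t$-independent and purely horizontal. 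Expanding $\eta\wedge(\dd\eta)^n = \dd t\wedge(\dd\beta)^n$ and using that this is a volume form, I would conclude that $(\dd\beta)^n$ is a volume form on $S$; that is, $\omega := \dd\beta$ is a symplectic form on the $2n$-dimensional slice $S$.

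At this point the classical symplectic Darboux theorem supplies coordinates $(q^i, p_i)$ on $S$, centered at $x$, with $\omega = \dd\beta = \sum_i \dd q^i \wedge \dd p_i$. Since $\dd\bigl(\sum_i p_i\,\dd q^i\bigr) = -\omega$, the $1$-form $\beta + \sum_i p_i\,\dd q^i$ is closed on the slice, hence equals $\dd g$ for some $g \in \Cont^\infty(S)$ by the Poincaré lemma; normalise $g(x)=0$. Extending $q^i, p_i, g$ to be $t$-independent, this yields $\eta = \dd t + \dd g - \sum_i p_i\,\dd q^i = \dd(t+g) - \sum_i p_i\,\dd q^i$, so setting $z = t + g$ completes the computation.

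Finally I would verify that $(q^i, p_i, z)$ is a centered coordinate system: the passage from $(t, q^i, p_i)$ to $(z, q^i, p_i)$ is triangular with $\partial z/\partial t = 1$, hence a local diffeomorphism, and all functions vanish at $x$ by construction. I expect the main obstacle to be organisational rather than conceptual — correctly tracking the sign conventions so that $\dd\beta$ matches $\sum_i \dd q^i\wedge\dd p_i$, and invoking the symplectic Darboux theorem (the one genuinely external input) with the right orientation. The Reeb straightening and the slice computation are routine once the nondegeneracy of $\dd\eta$ on $\ker\eta$ is in hand.
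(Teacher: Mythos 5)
Your argument is correct. The paper itself offers no proof of this statement --- it is quoted as a classical result with a citation to Godbillon --- so there is nothing internal to compare against; on its own merits, your reduction to the symplectic Darboux theorem is a standard and complete route. The key steps all check out: the Reeb field exists and is unique because $\ker\dd\eta_x$ is at least one-dimensional ($\dd\eta_x$ is a skew form on an odd-dimensional space, a point worth stating explicitly) and meets $\ker\eta_x$ trivially by nondegeneracy; Cartan's formula gives $\lieD{\Reeb}\eta = 0$; in flow-box coordinates $\eta = \dd t + \beta$ with $\beta$ basic, and $\eta\wedge(\dd\eta)^n = \dd t\wedge(\dd\beta)^n$ because $\beta\wedge(\dd\beta)^n$ is a $(2n+1)$-form in the $2n$ slice variables; the sign bookkeeping $\dd\bigl(\textstyle\sum_i p_i\,\dd q^i\bigr) = -\sum_i \dd q^i\wedge\dd p_i$ correctly produces $\eta = \dd z - p_i\,\dd q^i$ with $z = t+g$; and the triangular Jacobian confirms $(q^i,p_i,z)$ is a chart centered at $x$. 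The external inputs (flow-box, symplectic Darboux, Poincar\'e lemma) are legitimately classical. For comparison, the reference the paper cites proves this via the Darboux--Pfaff theorem on the class of a $1$-form, and many modern texts use a Moser-type deformation directly on $\eta$; your quotient-by-the-Reeb-flow argument is a third standard route whose advantage is that it isolates the symplectic Darboux theorem as the single nontrivial ingredient.
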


\noindent{For any fixed contact form $\eta$, me may define the following $2-$tensor,}
\begin{equation}
    \omega = \dd \eta + \eta \otimes \eta.
\end{equation}

We can check that $\eta$ is a contact form, if and only if, the contraction of $\omega$
\begin{equation}\label{eq:flat_iso}
    \begin{aligned}
        {\flat} : TM &\to T^* M ,\\
         v &\mapsto \omega \left( v , \cdot \right) = \contr{v}  \dd \eta + \eta (v)  \eta.
    \end{aligned}
\end{equation}
is a vector bundle isomorphism. We will also denote by $\flat:\VecFields(M)\to \Omega^1(M)$ its associated isomorphism of $\mathcal{C}^\infty(M)$-modules. The inverse of $\flat$ will be denoted by $\sharp$.\\
By using this isomorphism, one may define the \textit{Reeb vector field} $\Reeb$ as the unique vector field on $M$ satisfying that
$${\flat}(\Reeb)=\eta$$
Equivalently, $\Reeb$ is the unique vector field on $M$ such that
\begin{equation}
	\contr{\Reeb}  \dd \eta = 0, \quad  \contr{\Reeb}\, \eta = 1.
\end{equation}

Observe that in Darboux coordinates, the Reeb vector field $\Reeb$ is the partial derivative $\pdv{}{z}$. In fact,
\begin{itemize}
    \item $\flat \left( \frac{\partial}{\partial p_{i}}\right) = - \dd q^{i} $
    \item $\flat \left( \frac{\partial}{\partial q^{i}}\right) =  \dd p_{i} - p_{i}\eta $
\end{itemize}
Then,
\begin{itemize}
    \item $\sharp \left( \dd q^{i}  \right) = - \frac{\partial}{\partial p_{i}} $
    \item $\sharp \left( \dd p_{i}  \right) =  \frac{\partial}{\partial q_{i}} + p_{i} \frac{\partial}{\partial z} $
    \item $\sharp \left( \dd z  \right) = \frac{\partial}{\partial z} -  p_{i} \frac{\partial}{\partial p_{i}} $
\end{itemize}
The notion of \textit{contact Hamiltonian vector field} $X_H$ for a  \textit{Hamiltonian function} $H: M \rightarrow \mathbb{R}$, is defined by the formula
\begin{equation}\label{eq:hamiltonian_vf_contact}
    {\flat} (X_H) = \dd H - (\Reeb (H) + H) \, \eta,
\end{equation}
Then, it is locally expressed as follows
\begin{equation}\label{Hamiltneq1}
    X_{H} =\pdv{H}{p_i}\pdv{}{q^i}  -\left(\pdv{H}{q^i}  +p_{i} \pdv{H}{z}\right) \pdv{}{p_{i}}    + \left( p_{i} \pdv{H}{p_{i}}-H\right)\pdv{}{z}
\end{equation}
Therefore, its integral curves satisfy the \textit{contact Hamilton equations}
\begin{equation}\label{Hamiltoneqs340}
\begin{dcases}
 \frac{\dd q^{i}}{\dd t} &=    \pdv{H}{p^{i}}\\
 \frac{\dd p_{i}}{\dd t} &=  - \pdv{H}{q^{i}}  - p_{i} \pdv{H}{z}\\
 \frac{\dd z}{\dd t} &=   p_{i} \pdv{H}{p_{i}}-H
 \end{dcases}
\end{equation}

We may define the Jacobi structure $(M,\Lambda,E)$ associated to the contact manifold $(M,\eta)$ as follows
\begin{subequations}
  \begin{equation}\label{eq:contact_jacobi}
    \Lambda(\alpha,\beta) =
    -\dd \eta (\sharp (\alpha), \sharp (\beta)), \quad
    E = - \Reeb.
  \end{equation}
\end{subequations}

One can prove that the Hamiltonian vector fields $X_f$ defined with both structures, \cref{ham2} for Jacobi or and \cref{eq:hamiltonian_vf_contact} for contact, coincide. Equivalently, the notion of contact Hamiltonian vector field $X_H$ for a Hamiltonian function $H$, can be alternatively obtained by the formula
\begin{equation}\label{eq:hamiltonian_vf_jacobi}
X_H = \sharp_{\Lambda} (dH) - H \mathcal{R}.
\end{equation}

Observe that, in the case of contact manifolds, the map $\sharp_{\Lambda}$ can be written directly in terms of the contact structure~\cite[Section~3]{deLeon2019a} as:
    \begin{equation}
        \sharp_{\Lambda} (\alpha) =
        \sharp (\alpha) - \alpha(\Reeb) \Reeb.
    \end{equation}

\begin{remark}{\rm It is important do not confuse $\sharp$ (the inverse of $\flat$), with $ \sharp_{\Lambda}$ (the morphism from $T^{*}M $ to $TM$ associated to $\Lambda$).
}
\end{remark}

Using the contact form $\eta$, the tangent bundle of $M$ may be decomposed into a Whitney sum as
\begin{equation}\label{eq:contact_sum_decomposition}
    TM = \ker \eta \oplus \ker \dd\eta,
\end{equation}
Furthermore, it is easy to realize that $ \ker \dd \eta$ is generated by the Reeb vector field $\Reeb$. Notice that, $\sharp_{\Lambda}$ is not an isomorphism, since
$$ \ker \sharp_{\Lambda}  = \gen{\eta}$$
Therefore,
$$ \sharp_{\Lambda} \left( T^{*}M\right) = \ker  \eta$$
Thus, the above decomposition (\ref{eq:contact_sum_decomposition}) may be written in terms of the Jacobi structure $\Lambda$.

\begin{definition}\label{def:subspace_position}
Let $\mathcal{D}_x \subseteq T_x M$ be a subspace, where $x\in M$. We say that $\mathcal{D}_x$ is
    \begin{enumerate}
        \item \emph{Horizontal} if $\mathcal{D}_x \subseteq \ker  \eta_x$.
        \item \emph{Vertical} if $\Reeb_x \in \mathcal{D}_x $.
        \item \emph{Oblique} otherwise. By a dimensional counting argument, this is equivalent to $\mathcal{D}_x =(\mathcal{D}_x \cap \ker  \eta_x)\oplus \gen{\Reeb_x + v_x}$, with $v_x \in \ker  \eta_x \setminus \mathcal{D}_x$.
    \end{enumerate}
We say that a distribution $\mathcal{D}$ is \emph{horizontal}/\emph{vertical}/\emph{oblique} if $\mathcal{D}_x$ is \emph{horizontal}/\emph{vertical}/\emph{oblique}, for every point $x$, respectively.
\end{definition}

Let us consider a distribution $\mathcal{D} \subseteq TM$. Then, we may define the following notions of complement with respect to $\omega$:
\begin{equation}
    \begin{aligned}
        \orth{\mathcal{D}}  &=
          \set{v \in TM \mid \omega(w,v) = {\flat}(w)(v) = 0,\, \forall w\in \mathcal{D}}
          = \ann{({\flat}(\mathcal{D}))},\\
        \orthL{\mathcal{D}} &=
          \set{v \in TM\mid \omega(v,w) = 0, \forall w \in \mathcal{D}} = \flat^{-1}(\ann{\mathcal{D}}).
    \end{aligned}
\end{equation}
Observe that, considering $\omega_{-}$ as the $2-$tensor associated to the contact form $-\eta$, then we have, $\omega \left( w,v \right) = \omega_{-} \left( v , w \right)$, for all $v,w \in TM$ and, therefore,
$$\orth{\mathcal{D}_{-}}=\orthL{\mathcal{D}},  $$
where 
$$\orth{\mathcal{D}_{-}} =  \set{v \in TM \mid \omega_{-}(w,v)  = 0,\, \forall w\in \mathcal{D}}$$
We have
\begin{equation}\label{complements_cancel}
    \orthL{(\orth{\mathcal{D}})} =
    \orth{(\orthL{\mathcal{D}})} = \mathcal{D}.
\end{equation}
Furthermore, we may interchange sums and intersections, i.e,
\begin{subequations}\label{eq:complement_intersections}
    \begin{align}
        \orth{(\mathcal{D}\cap \Gamma)} &= \orth{\mathcal{D}} + \orth{\Gamma},\\
        \orth{(\mathcal{D} +  \Gamma)} &= \orth{\mathcal{D}}  \cap \orth{\Gamma},
    \end{align}
\end{subequations}
for all distributions $\mathcal{D}$ and $\Gamma$ on $M$ (for the left complement, we obtain similar identities that for the right case).
\begin{lemma}[\cite{primero}]\label{lem:rl_complement}
    Let $\mathcal{D}$ be a distribution on $M$.
    \begin{itemize}
        \item  If $\mathcal{D}$ is horizontal, then
        \begin{equation}\label{eq:horizontal_orth}
            \orth{\mathcal{D}} = \orthL{\mathcal{D}} = \set{v \in TM \mid \dd \eta(v,w) = 0, \, \forall w \in \mathcal{D}}.
        \end{equation}
        Hence, $\orth{\mathcal{D}}$ is vertical.

        \item  If $\mathcal{D}$ is vertical, then
        \begin{equation}
            \orth{\mathcal{D}} = \orthL{\mathcal{D}} = \set{v \in \ker \eta \mid \dd \eta(v,w) = 0, \, \forall w \in (\mathcal{D} \cap \ker \eta)}.
        \end{equation}
        Hence, $\orth{\mathcal{D}}$ is horizontal.
    \end{itemize}
\end{lemma}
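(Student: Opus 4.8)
The plan is to unwind both complements directly from the definition $\omega = \dd\eta + \eta\otimes\eta$, using only the two defining relations of the Reeb field, $\contr{\Reeb}\dd\eta = 0$ and $\eta(\Reeb)=1$. The guiding observation is that $\omega$ splits into an antisymmetric part $\dd\eta$ and a symmetric part $\eta\otimes\eta$, so $\orth{\mathcal{D}}$ and $\orthL{\mathcal{D}}$ need not coincide in general; the content of the lemma is that in each of the two special positions the symmetric term $\eta\otimes\eta$ is forced to vanish on the relevant arguments, which is precisely what makes the left and right complements agree. Accordingly, I would treat the two cases separately and in each case keep careful track of the asymmetry rather than assume it away.

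For the horizontal case, I would observe that every $w\in\mathcal{D}$ satisfies $\eta(w)=0$, so $\omega(w,v) = \dd\eta(w,v)$ and $\omega(v,w) = \dd\eta(v,w)$. Hence $v\in\orth{\mathcal{D}}$ means $\dd\eta(w,v)=0$ for all $w\in\mathcal{D}$, and $v\in\orthL{\mathcal{D}}$ means $\dd\eta(v,w)=0$ for all $w\in\mathcal{D}$; by antisymmetry of $\dd\eta$ these are the same condition, yielding the common formula $\orth{\mathcal{D}}=\orthL{\mathcal{D}}=\{v\in TM \mid \dd\eta(v,w)=0,\ \forall w\in\mathcal{D}\}$. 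To conclude verticality I would simply check $\Reeb\in\orth{\mathcal{D}}$, which is immediate from $\dd\eta(\Reeb,w)=0$ for every $w$.

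For the vertical case, the key step is the splitting $\mathcal{D} = (\mathcal{D}\cap\ker\eta)\oplus\gen{\Reeb}$, valid because the map $w\mapsto w-\eta(w)\Reeb$ sends $\mathcal{D}$ into $\mathcal{D}\cap\ker\eta$ (using $\Reeb\in\mathcal{D}$). Since the complement conditions are linear in $w$, it suffices to test against the spanning set $\{\Reeb\}\cup(\mathcal{D}\cap\ker\eta)$. Testing against $\Reeb$ gives $\omega(\Reeb,v)=\eta(v)$ and $\omega(v,\Reeb)=\eta(v)$ (the $\dd\eta$ contributions drop out by the Reeb relation), so both conditions force $v\in\ker\eta$; once $v\in\ker\eta$, testing against $w\in\mathcal{D}\cap\ker\eta$ reduces to $\dd\eta(v,w)=0$ because $\eta(w)=0$ kills the symmetric term. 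This produces the stated common formula and shows $\orth{\mathcal{D}}\subseteq\ker\eta$, i.e. horizontality.

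The calculations are elementary, so I do not expect a genuine obstacle; the only points requiring care are bookkeeping the asymmetry of $\omega$ so that equality of the two complements is verified rather than presumed, and confirming in the vertical case that testing against the spanning elements $\{\Reeb\}\cup(\mathcal{D}\cap\ker\eta)$ really does capture the complement condition for all of $\mathcal{D}$.
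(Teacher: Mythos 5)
Your proof is correct: the paper states this lemma as a quotation from~\cite{primero} and does not reproduce a proof, and your argument is the standard direct verification one would expect there --- splitting $\omega$ into $\dd\eta$ plus the symmetric term $\eta\otimes\eta$, using $\contr{\Reeb}\dd\eta=0$ and $\eta(\Reeb)=1$, and in the vertical case testing against the spanning set $\set{\Reeb}\cup(\mathcal{D}\cap\ker\eta)$ after the decomposition $\mathcal{D}=(\mathcal{D}\cap\ker\eta)\oplus\gen{\Reeb}$. All steps check out, including the two places where the left/right asymmetry could have been glossed over.
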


\vspace{1cm}

In what follows, we will omit the word \emph{contact} and call $X_H$ simply a \emph{Hamiltonian vector field}. The triple $(M,\eta,H)$ will be called \emph{contact Hamiltonian system}.
\begin{proposition}\label{prop213123}
Let $H$ be a Hamiltonian function. The following statements are equivalent:
\begin{enumerate}
\item $X_{H}$ is the Hamiltonian vector field of $H$.

\item It satisfies that
\begin{subequations}\label{eqs:ham_vfs_characterization2}
    \begin{align}
        \eta(X_H) &= -H, \\
        \iota_{X_{H}} \dd \eta_{\vert \ker \eta} &= \dd H_{\vert \ker \eta}\label{eq:ham_vf_conf_contactomorphism2}
    \end{align}
\end{subequations}
\item It satisfies that
\begin{subequations}\label{eqs:ham_vfs_characterization}
    \begin{align}
        \eta(X_H) &= -H, \label{eqs:ham_vfs_characterizationa} \\
        \lieD{X_H} \eta &= a \eta \label{eq:ham_vf_conf_contactomorphism},
    \end{align}
\end{subequations}
for some function $a$. Notice that this implies that
$a = -\Reeb(H)$.
\end{enumerate}
\end{proposition}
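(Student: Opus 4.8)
The plan is to establish the two equivalences $(1)\Leftrightarrow(2)$ and $(2)\Leftrightarrow(3)$ separately, reducing in each case the intrinsic identity to its restriction to the two summands of the Whitney decomposition $TM=\ker\eta\oplus\gen{\Reeb}$ from \cref{eq:contact_sum_decomposition}. The starting point is to unfold the defining equation \cref{eq:hamiltonian_vf_contact} of $X_H$ using the contraction formula \cref{eq:flat_iso} for $\flat$, which gives
\begin{equation*}
  \contr{X_H}\dd\eta + \eta(X_H)\,\eta = \dd H - (\Reeb(H)+H)\,\eta. \tag{$\star$}
\end{equation*}
Throughout I will repeatedly use the two defining properties $\contr{\Reeb}\dd\eta=0$ and $\eta(\Reeb)=1$ of the Reeb field, together with the elementary fact that a $1$-form annihilating $\ker\eta$ is automatically a smooth multiple of $\eta$, the multiplier being its value on $\Reeb$.

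For $(1)\Rightarrow(2)$, I would first pair $(\star)$ with $\Reeb$: the term $\contr{X_H}\dd\eta$ contributes $\dd\eta(X_H,\Reeb)=-\contr{\Reeb}\dd\eta(X_H)=0$, so the left-hand side collapses to $\eta(X_H)$, while the right-hand side gives $\Reeb(H)-(\Reeb(H)+H)=-H$, yielding $\eta(X_H)=-H$. Next I would restrict $(\star)$ to $\ker\eta$, where every term carrying a factor of $\eta$ drops out, leaving exactly $\contr{X_H}\dd\eta_{\vert\ker\eta}=\dd H_{\vert\ker\eta}$. Conversely, for $(2)\Rightarrow(1)$, I would show that the $1$-form $\beta:=\flat(X_H)-\dd H+(\Reeb(H)+H)\eta$ vanishes by checking it on each summand: on $\ker\eta$ it reduces to $\contr{X_H}\dd\eta_{\vert\ker\eta}-\dd H_{\vert\ker\eta}=0$, and on $\Reeb$ the hypothesis $\eta(X_H)=-H$ together with $\dd\eta(X_H,\Reeb)=0$ gives $\beta(\Reeb)=-H-\Reeb(H)+(\Reeb(H)+H)=0$. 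Since $\ker\eta$ and $\gen{\Reeb}$ span $TM$, we get $\beta=0$, which is precisely $(\star)$.

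The equivalence $(2)\Leftrightarrow(3)$ is where Cartan's formula enters. Assuming the common first equation $\eta(X_H)=-H$, I would write $\lieD{X_H}\eta=\contr{X_H}\dd\eta+\dd(\contr{X_H}\eta)=\contr{X_H}\dd\eta-\dd H$, so that the conformal condition $\lieD{X_H}\eta=a\eta$ becomes the purely algebraic identity $\contr{X_H}\dd\eta-\dd H=a\eta$. Restricting this to $\ker\eta$ yields the second equation of $(2)$; conversely, if $(2)$ holds then the $1$-form $\gamma:=\contr{X_H}\dd\eta-\dd H$ annihilates $\ker\eta$ and is therefore of the form $a\eta$, which gives $(3)$. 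Finally, to identify $a$, I would evaluate $a\eta=\gamma$ on $\Reeb$: using $\eta(\Reeb)=1$ and once more $\dd\eta(X_H,\Reeb)=0$, the right-hand side is $-\Reeb(H)$, so $a=-\Reeb(H)$.

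None of the steps presents a genuine obstacle; the argument is essentially bookkeeping in the decomposition $TM=\ker\eta\oplus\gen{\Reeb}$. The only point requiring a little care is the repeatedly used principle that a $1$-form $\gamma$ annihilating $\ker\eta$ equals $a\eta$ with $a$ smooth: this holds because $\eta$ is a nowhere-vanishing section spanning the line bundle $\ann{(\ker\eta)}\subseteq T^*M$, so that $a:=\gamma(\Reeb)$ is a well-defined smooth function with $\gamma=a\eta$. Everything else follows from the defining properties of $\Reeb$ and from Cartan's formula $\lieD{X}\alpha=\contr{X}\dd\alpha+\dd\,\contr{X}\alpha$.
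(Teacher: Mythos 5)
Your proof is correct: the identity $(\star)$ is exactly the defining equation \cref{eq:hamiltonian_vf_contact} unfolded via \cref{eq:flat_iso}, the pairing with $\Reeb$ and the restriction to $\ker\eta$ are both computed correctly, and the passage between (2) and (3) via Cartan's formula together with the observation that a $1$-form annihilating $\ker\eta$ is a smooth multiple of $\eta$ is sound. The paper states this proposition without proof (it only records, immediately afterwards, the consequence $\contr{X_H}\dd\eta = \dd H - \Reeb(H)\eta$ obtained ``by Cartan's formula''), and your argument --- bookkeeping in the splitting $TM = \ker\eta\oplus\ker\dd\eta$ of \cref{eq:contact_sum_decomposition} --- is precisely the standard reasoning the paper implicitly relies on.
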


By Cartan's formula, formulas (\ref{eqs:ham_vfs_characterizationa}) and (\ref{eq:ham_vf_conf_contactomorphism}) imply the following identity
\begin{subequations}
    \begin{align}
        \contr{X_H} \dd \eta &= \dd H - \Reeb(H) \eta.
    \end{align}
\end{subequations}
Thus, in contrast with the symplectic case, the energy and the phase space volume are not preserved. In fact, denoting the volume form by $\Omega = \eta \wedge {(\dd\eta)}^n$, the \textit{energy and volume dissipation} are given by the following formulas:
\begin{itemize}
    \item[i)] 
    \begin{equation}
        \lieD{X_H} H = - \Reeb(H) H.
    \end{equation}

    \item[ii)]

     \begin{equation}\label{estadeaqui234}
        \lieD{X_H} \eta = - \Reeb(H) \eta.
    \end{equation}

\item[iii)]
\begin{equation}
        \lieD{X_H} \Omega=
         - (n+1) \Reeb (H) \Omega.
    \end{equation}
\end{itemize}

\section{General framework for contact Hamiltonian constrained dymanics}\label{genframework24}

Assume a $2n+1-$dimensional contact manifold $\left( P, \eta\right)$ equipped with a Hamiltonian function $H : P \rightarrow \mathbb{R}$. The external constraints will be codified into an $r-$dimensional embbeded submanifold $M$ of $P$, called the \textit{constraint submanifold}. Furthermore, we will consider a regular distribution $F$ on $P$ along $M$, namely, a vector subbundle of $T_{ M} P$ (the space of tangent vectors to $P$ at points in $M$). In general, the distribution $F$ characterizes the \textit{reaction forces}, via the annihilator $\ann{F}$.

Thus, we are interested in finding vector fields $X\in \mathfrak{X}\left( P \right)$ such that $X$ is tangent to the constraint submanifold $M$ and satisfy the following equation
$$ \flat \left(X\right) = \dd H - \left(H + \Reeb\left(H\right)\right)\eta + \lambda_{a}\Psi^{a},$$
where $\Psi^{a}$ are reaction forces and $\lambda_a$ are some Lagrange multipliers to be determined. In other words, we are looking for solutions of the following equation

\begin{equation}
    \begin{dcases}\label{6.4}
        \flat \left(X\right) - \dd H + \left(H + \Reeb\left(H\right)\right)\eta \in \ann{F} \\
        X_{\vert M} \in  \mathfrak{X}\left(M \right).
    \end{dcases}
\end{equation}

From now on, we will consider $\left( q^{i} , p_{i} , z\right)$ as a (local) system of Darboux coordinates for $\eta$. Let $\set{\Psi^{a} \ =\ \Phi^{a}_{i}\dd q^{i} \ + \ \Psi^{a}_{i}\dd p_{i} \ + \ \mu^{a} \dd z} $ be a (local) basis of $1-$forms of $\ann{F}$, and a vector field $Y$ on $P$ such that
\begin{equation}\label{12}
 \flat \left( Y \right) \in \ann{F}.
 \end{equation}
Then, there exist some (local) functions $\lambda_{a}$ such that
$$ \flat\left( Y \right)  \ = \  \lambda_a\Psi^{a} \ =\ \lambda_a\Phi^{a}_{i}\dd q^{i} \ + \ \lambda_a\Psi^{a}_{i}\dd p_{i} \ + \ \lambda_a\mu^{a} \dd z.$$
Observe that $\eta = \dd z -  p_{i}{\dd q}^{i}$ and, hence, $\dd \eta ={\dd q}^{i} \wedge {\dd p}_{i}$. Thus,
\begin{eqnarray*}
\lambda_a\Psi^{a}_{i} & = & \{ \flat \left( Y \right) \} \left( \pdv{}{p_i}\right) = \iota_{Y} \dd \eta \left( \pdv{}{p_i}\right)  + \eta \left( Y \right)  \eta \left( \pdv{}{p_i}\right)\\
&=& -\iota_{\pdv{}{p_i}}\dd \eta  \left( Y \right) = Y \left(q^{i} \right)
\end{eqnarray*}
Analogously, contracting by $\pdv{}{z}$, we obtain that $\eta\left( Y \right) = \lambda_a\mu^{a}$ and 
$$Y \left( z \right) = \lambda_{a}\left[\mu^{a}+p_{i}\Psi^{a}_{i}\right]$$
Finally,
\begin{eqnarray*}
\lambda_a\Phi^a_i & = & \{ \flat \left( Y \right) \} \left( \pdv{}{q^i}\right) = \iota_{Y} \dd \eta_{Q} \left( \pdv{}{q^i}\right)  + \eta_{Q} \left( Y \right)  \eta_{Q} \left( \pdv{}{q^i}\right)\\
&=&  -   Y \left( p_{i}\right) - \lambda_{a}\mu^{a}p_{i}
\end{eqnarray*}
Then,
$$Y\left( p_{i}\right) = -\lambda_{a}\left[\mu^{a}p_{i}+\Phi^{a}_{i}\right].$$
Next, a vector field $X$ solving \cref{6.4} should satisfy that $\flat\left( X- X_{H}\right) \in \ann{F}$, where $X_{H}$ is the solution of the contact Hamiltonian system (\ref{Hamiltoneqs340}). Therefore,
\begin{eqnarray*}\label{nonholHamiltneqgeneralized}
    X &=&\left[\pdv{H}{p_i}+ \lambda_a\Psi^{a}_{i}\right]\pdv{}{q^i} - \left[\pdv{H}{q^Pi}  +p_{i} \pdv{H}{z} + \lambda_{a}\left(\mu^{a}p_{i}+\Phi^{a}_{i}\right) \right]\pdv{}{p_{i}}\\
    &+&\left[\left( p_{i} \pdv{H}{p_{i}}-H+ \lambda_{a}\left(\mu^{a}+p_{i}\Psi^{a}_{i}\right)\right)\right]\pdv{}{z}
\end{eqnarray*}
The Lagrange multipliers $\lambda_{a}$ will be determined from the tangency condition to $M$, and, in general, they will not be unique. 
Notice that, $M$ may be locally described in terms of independent (local) functions $\set{\varphi^{a}}$ on $P$
in the following way
$$M = \set{x \in P \ \mid \ \varphi^{a}\left( x \right) = 0, \ \forall a } = \varphi^{-1}\left(0\right),$$
where $\varphi = \left( \varphi^{1}, \dots , \varphi^{2n-1-r}\right)$. These functions $\varphi^{a}$ will be called \textit{constraint functions} defining $M$.
\begin{theorem}\label{18.4}
Let be a Hamiltonian function $H: P \rightarrow \mathbb{R}$, and $M \subseteq P$ a constraint submanifold. Let $X$ be a vector field on $P$ satisfying \cref{6.4}. Then, the integral curves $\xi$ of $X$ are solutions of the following equations,
\begin{align}
    \begin{cases}\label{generalizedeq:Hamiltonnonholonomic_herglotz_eqs_coordsMechatype}
         \frac{\dd q^{i}}{\dd t} &=   \pdv{H}{p^{i}} + \lambda_a\Psi^{a}_{i}\\
 \frac{\dd p_{i}}{\dd t} &=  - \pdv{H}{q^{i}}  -p_{i} \pdv{H}{z} - \lambda_{a}\left(\mu^{a}p_{i}+\Phi^{a}_{i}\right)\\
 \frac{\dd z}{\dd t} &=   p_{i} \pdv{H}{p_{i}}-H + \lambda_{a}\left(\mu^{a}+p_{i}\Psi^{a}_{i}\right)\\
        \varphi^{a}\left(\xi\right) & = 0, \ \forall a
    \end{cases}
\end{align}
\end{theorem}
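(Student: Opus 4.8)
The plan is to observe that the coordinate computation needed for this statement has essentially been carried out in the paragraphs immediately preceding it; the proof reduces to assembling those pieces and reading off the integral-curve equations from the resulting coordinate expression of $X$.

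First I would rewrite the first line of \cref{6.4} by using the defining relation \cref{eq:hamiltonian_vf_contact} of the contact Hamiltonian vector field, namely $\flat(X_H) = \dd H - (\Reeb(H) + H)\,\eta$. Subtracting this from the membership condition, the requirement $\flat(X) - \dd H + (H + \Reeb(H))\eta \in \ann{F}$ becomes simply $\flat(X - X_H) \in \ann{F}$. Since $\eta$ is a contact form, $\flat$ is a vector bundle isomorphism, so setting $Y := X - X_H$ this says exactly that $Y$ is a vector field with $\flat(Y) \in \ann{F}$, which is precisely the hypothesis \cref{12} treated above. I would then invoke the explicit expressions already derived there, namely $Y(q^i) = \lambda_a\Psi^a_i$, $\;Y(p_i) = -\lambda_a(\mu^a p_i + \Phi^a_i)$ and $Y(z) = \lambda_a(\mu^a + p_i\Psi^a_i)$, and add them componentwise to the coordinates of $X_H$ recorded in \cref{Hamiltneq1}. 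This reproduces the coordinate form of $X$ displayed just before the statement.

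With $X$ in coordinates, an integral curve $\xi$ of $X$ satisfies $\dot\xi(t) = X(\xi(t))$, and reading off the $q^i$, $p_i$ and $z$ components yields precisely the first three lines of \cref{generalizedeq:Hamiltonnonholonomic_herglotz_eqs_coordsMechatype}. The remaining line would follow from the tangency condition $X_{\vert M} \in \mathfrak{X}(M)$ in \cref{6.4}: since $X$ is tangent to $M$ along $M$ and $M = \varphi^{-1}(0)$, any integral curve $\xi$ issuing from a point of $M$ remains in $M$ for all time by the standard invariance argument for flows of tangent vector fields, so $\varphi^a(\xi(t)) = 0$ for every $a$.

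I do not expect a genuine obstacle here, as the computational core has already been done; the statement is essentially a repackaging of the coordinate analysis. The only point requiring care is the last equation: one must note that the tangency hypothesis constrains the admissible initial data to $M$ (an integral curve starting off $M$ need not satisfy $\varphi^a = 0$) and then guarantees the invariance of $M$ under the flow. The Lagrange multipliers $\lambda_a$ are exactly what this tangency condition determines, so this is the step where the constraints enter substantively.
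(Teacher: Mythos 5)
Your proposal is correct and follows essentially the same route as the paper, which presents exactly this computation (reducing the first condition of \cref{6.4} to $\flat(X - X_H) \in \ann{F}$, reusing the coordinate expressions for a field $Y$ with $\flat(Y) \in \ann{F}$, and adding them to \cref{Hamiltneq1}) in the paragraphs immediately before the theorem rather than in a separate proof environment. Your remark that the constraint equations $\varphi^{a}(\xi)=0$ hold only for integral curves issuing from points of $M$ is a small but worthwhile clarification that the paper leaves implicit.
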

\cref{generalizedeq:Hamiltonnonholonomic_herglotz_eqs_coordsMechatype} will be called \textit{constrained Herglotz equations}. Furthermore, in case of existence and uniqueness, the solution $X$ to \cref{6.4} will be called \textit{constrained Hamiltonian vector field} and denoted by $X_{H,M}$.

\begin{definition}\label{definitionconstraind23}
\rm
A \textit{constrained Hamiltonian system} is given by a quintuple $\left(P, \eta, H, M, F \right)$, where $H : P \rightarrow \RR$ is a Hamiltonian function on a contact manifold $P$ with contact form $\eta$, $M$ is an embbeded submanifold of $P$, and $F$ is a regular distribution on $P$ along $M$.
\end{definition}

In general, the existence and uniqueness are not guaranteed.

Let be the vector subbundle of $T_{M}P$ defined by $\sharp \left( \ann{F}\right) = \orthL{F}$, with $\sharp = \flat^{-1}$. Then, for any two solution $X$ and $Y$ to \cref{6.4}, the difference $X-Y$ is tangent to $\orthL{F} \cap T  M$. In other words, we have proved the following result:

\begin{proposition}\label{GeneralizedYoootraproposiciondelcarajomas}
The uniqueness of solutions of \cref{6.4} is equivalent to the condition
\begin{equation}\label{uniqueness4562}
\orthL{F} \cap T M= \{0\}.    
\end{equation}

\end{proposition}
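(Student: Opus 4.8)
The plan is to reformulate \cref{6.4} so that its first condition becomes a single membership statement involving $\orthL{F}$, and then to exhibit the solution set as an affine space modelled on the sections of $\orthL{F}\cap TM$. First I would recall that, by the defining identity \cref{eq:hamiltonian_vf_contact} of the contact Hamiltonian vector field, $\flat(X_H) = \dd H - (H+\Reeb(H))\eta$. Hence the first line of \cref{6.4} reads $\flat(X) - \flat(X_H) = \flat(X-X_H) \in \ann{F}$. Since $\orthL{F} = \flat^{-1}(\ann{F})$ by definition, this is equivalent to $X - X_H \in \orthL{F}$. Thus a vector field $X$ tangent to $M$ solves \cref{6.4} if and only if $X - X_H \in \orthL{F}$ along $M$.

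Next I would take two solutions $X$ and $Y$. Because $\flat$ is $\Cont^\infty$-linear and $\ann{F}$ (hence $\orthL{F}$) is a vector subbundle of $T_M P$, the difference satisfies $X - Y = (X-X_H)-(Y-X_H) \in \orthL{F}$. Moreover $X$ and $Y$ are both tangent to $M$, so $X-Y \in TM$ along $M$. Combining these, $X - Y \in \orthL{F}\cap TM$, which is precisely the assertion made in the remark preceding the statement. This already yields one half of the equivalence: if $\orthL{F}\cap TM = \{0\}$, then $X-Y=0$, so the solution is unique.

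For the converse I would argue by contraposition. Assume $\orthL{F}\cap TM \neq \{0\}$ and that \cref{6.4} admits at least one solution $X$. Choosing a (locally) nonvanishing section $Z$ of $\orthL{F}\cap TM$, I claim that $X+Z$ is a second solution: indeed $\flat(X+Z-X_H) = \flat(X-X_H) + \flat(Z)$, and both summands lie in $\ann{F}$---the first by hypothesis, the second because $Z \in \orthL{F} = \flat^{-1}(\ann{F})$---so the membership condition holds; and $X+Z$ is tangent to $M$ since both $X$ and $Z$ are. As $Z\neq 0$, this violates uniqueness, completing the equivalence.

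The only delicate point I anticipate is that uniqueness in the vacuous sense---no solution at all---is compatible with any choice of $F$ and $M$, so the statement must be read as characterising the affine structure of the solution set: whenever a solution exists, the solutions form a torsor over the space of sections of $\orthL{F}\cap TM$, and uniqueness is equivalent to the triviality of this space. The remaining verifications, namely that $\orthL{F}$ is a genuine vector subbundle of $T_M P$ (so that fibrewise subtraction stays inside it), which follows from the regularity of $F$ together with $\flat$ being a bundle isomorphism, are routine and I would leave them implicit.
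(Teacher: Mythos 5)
Your proof is correct and follows essentially the same route as the paper: the key observation in both is that the difference of any two solutions of \cref{6.4} lies in $\orthL{F}\cap TM$, since $\flat(X-Y)\in\ann{F}$ and both fields are tangent to $M$. You go further than the paper by explicitly supplying the converse (adding a nonvanishing section of $\orthL{F}\cap TM$ to an existing solution) and by flagging the vacuous-uniqueness caveat when no solution exists, both of which the paper leaves implicit.
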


Let us assume that $rank\left(F \right) = dim \left( M \right)$. Then,
\begin{equation}\label{41.2}
\orthL{F} \oplus T M = T_{M}P,
\end{equation}
where $T_{M}P$ consists of the tangent vectors to $P $ at points of $M $. Therefore, we may consider these two projectors:
\begin{subequations}\label{projectors2.2}
\begin{align}
\mathcal{P}: T_{M}P &\rightarrow T M,\\
\mathcal{Q} : T_{M}P &\rightarrow \orthL{F}.
\end{align}
\end{subequations}
Let be $X = \mathcal{P} \left( {X_{H}}_{\vert M} \right)$, where $X_{H}$ is the Hamiltonian vector field associated to $H$ (see \cref{eq:hamiltonian_vf_contact}). Then, by definition $X \in  \mathfrak{X} \left(M\right)$. On the other hand, at points of $M$, we have
\begin{align*}
 &\flat\left(X\right) - \dd H + \left(H + \Reeb\left(H\right)\right)\eta \\ &=
\flat \left(X_{H} - \mathcal{Q}\left( X_{H} \right) \right) - \dd H + \left(H + \Reeb\left(H\right)\right)\eta\\ & =
 -\flat \left(\mathcal{Q}\left( X_{H} \right) \right) \in \ann{F}
\end{align*}
Therefore, by uniqueness, $\mathcal{P} \left( {X_{H}}_{\vert M} \right) = X_{H , M}$ is the solution of \cref{6.4}.

\begin{corollary}\label{equaldimensions34}
If $rank\left(F \right) = dim \left( M \right)$, then condition (\ref{uniqueness4562}) implies both the existence and uniqueness of solutions.
\end{corollary}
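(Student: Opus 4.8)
The plan is to separate the statement into its two assertions --- uniqueness and existence --- and to exploit the fact that the equal-rank hypothesis converts the transversality condition (\ref{uniqueness4562}) into a genuine Whitney-sum splitting of $T_M P$. Uniqueness requires no new work: the hypothesis (\ref{uniqueness4562}) is exactly the condition appearing in \cref{GeneralizedYoootraproposiciondelcarajomas}, so that proposition already forces any two solutions of \cref{6.4} to coincide. The content of the corollary therefore lies entirely in producing a solution.

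For existence, the first step is a dimension count. Since $F$ is a subbundle of $T_M P$ with $rank(F) = \dim M$ and $\dim P = 2n+1$, its annihilator $\ann{F}$ has rank $(2n+1)-\dim M$; because $\flat$ is a vector bundle isomorphism, $\orthL{F} = \sharp(\ann{F})$ has the same rank $(2n+1)-\dim M$. Hence $\dim \orthL{F} + \dim T M = \left[(2n+1)-\dim M\right] + \dim M = 2n+1 = \dim T_M P$. Combining this equality of dimensions with the transversality hypothesis $\orthL{F} \cap T M = \{0\}$ upgrades the sum to a direct sum, giving precisely the decomposition (\ref{41.2}).

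Once (\ref{41.2}) holds, the projectors $\mathcal{P}$ and $\mathcal{Q}$ of (\ref{projectors2.2}) are well-defined, and the construction carried out just before the statement applies verbatim: setting $X := \mathcal{P}\left({X_H}_{\vert M}\right)$ produces a vector field tangent to $M$ satisfying $\flat(X) - \dd H + (H + \Reeb(H))\eta = -\flat(\mathcal{Q}(X_H)) \in \ann{F}$, so $X$ solves \cref{6.4} and existence follows. I expect the only delicate point to be recognizing that the equal-rank hypothesis alone yields merely the dimension identity, and that it is the transversality condition (\ref{uniqueness4562}) that promotes this to the splitting (\ref{41.2}); the remaining existence step is just the projection already performed in the preceding paragraph, while uniqueness is a direct citation of \cref{GeneralizedYoootraproposiciondelcarajomas}.
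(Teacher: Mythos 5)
Your proposal is correct and follows essentially the same route as the paper: uniqueness is exactly \cref{GeneralizedYoootraproposiciondelcarajomas}, and existence comes from the dimension count showing that $\mathrm{rank}\left(\orthL{F}\right) + \dim M = 2n+1$, which together with $\orthL{F} \cap TM = \{0\}$ yields the splitting (\ref{41.2}) and hence the projector $\mathcal{P}$, with $X = \mathcal{P}\left({X_H}_{\vert M}\right)$ verified to solve \cref{6.4} exactly as in the computation preceding the corollary. Your explicit remark that the equal-rank hypothesis alone gives only the dimension identity, while the transversality condition is what makes the sum direct, is a small but worthwhile clarification of the paper's more terse presentation.
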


Nevertheless, this corollary is not a necessary condition for the existence of solutions.

\begin{lemma}\label{gen37.e}
\cref{6.4} admits solutions if, and only if,
$$ \dd H - \left( H + \Reeb\left( H \right)\right)\eta \in \ann{\left(\orth{TM} \cap F \right)}$$

\begin{proof}
Notice that $\flat \left( X_{H, M}\right) \in \flat \left( TM \right) = \ann{\left(\orth{TM}\right)}$. Then, there exists $\Phi \in \ann{F}$ such that
\begin{small}
$$\dd H_{\vert M} - \left( H + \Reeb\left(H \right) \right)\eta_{\vert M}= \flat \left( X_{H, M}\right) + \Phi \in \ann{\left(\orth{TM}\right)} \ + \ \ann{F} = \ann{\left( \orth{TM} \cap F \right)}$$
\end{small}
\end{proof}
\end{lemma}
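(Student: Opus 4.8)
The plan is to strip \cref{6.4} down to a pointwise incidence condition using the isomorphism $\flat$, and then transport that condition to the cotangent side by annihilator duality, where it becomes exactly the stated membership. Throughout I would write $\alpha = \dd H - (H + \Reeb(H))\eta$, which is precisely $\flat(X_H)$ by \cref{eq:hamiltonian_vf_contact}.

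First I would rewrite the first line of \cref{6.4}. The condition $\flat(X) - \alpha \in \ann{F}$ is equivalent, after applying $\sharp = \flat^{-1}$ and using $\sharp(\ann{F}) = \orthL{F}$, to $X \in X_H + \orthL{F}$. Combining this with the tangency requirement $X_{\vert M} \in TM$, a solution exists if and only if $(X_H + \orthL{F}) \cap TM \neq \emptyset$, i.e. (since $\orthL{F}$ is a linear subbundle) if and only if $X_H \in TM + \orthL{F}$. So the whole lemma reduces to the fiberwise equivalence $X_H \in TM + \orthL{F} \iff \flat(X_H) = \alpha \in \ann{(\orth{TM} \cap F)}$.

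Next I would apply $\flat$ to $TM + \orthL{F}$ and identify each summand with an annihilator. On one hand $\flat(\orthL{F}) = \ann{F}$ directly from the definition $\orthL{\mathcal{D}} = \flat^{-1}(\ann{\mathcal{D}})$. On the other hand, the definition $\orth{\mathcal{D}} = \ann{(\flat(\mathcal{D}))}$ together with reflexivity of the annihilator on fibers gives $\flat(TM) = \ann{(\orth{TM})}$ (this is \cref{complements_cancel} read through $\flat$). Hence $\flat(TM + \orthL{F}) = \ann{(\orth{TM})} + \ann{F}$. Finally, the elementary annihilator identity $\ann{A} + \ann{B} = \ann{(A \cap B)}$ — the cotangent-side counterpart of \cref{eq:complement_intersections} — turns this into $\ann{(\orth{TM} \cap F)}$. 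Since $\flat$ is an isomorphism, $X_H \in TM + \orthL{F}$ iff $\alpha \in \ann{(\orth{TM} \cap F)}$, which is the claim.

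The argument is entirely fiberwise linear algebra, so I expect no conceptual obstacle; the points to handle carefully are (i) keeping the two complements $\orth{(\cdot)}$ and $\orthL{(\cdot)}$ straight, since it is $\orthL{F}$ that appears on the tangent side while the right-handed complement $\orth{TM}$ is what survives on the cotangent side, and (ii) the reflexivity $\ann{(\ann{A})} = A$ and the sum-to-intersection identity for annihilators, which hold pointwise in finite dimension. The only genuinely non-algebraic caveat is smoothness: the equivalence above is pointwise along $M$, so to promote a pointwise solution into a smooth vector field one invokes the regularity of $F$ and $M$ exactly as in the projected construction $X = \mathcal{P}({X_H}_{\vert M})$ following \cref{projectors2.2}; for the bare existence statement it suffices to read \cref{6.4} fiberwise.
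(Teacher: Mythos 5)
Your argument is correct and takes essentially the same route as the paper's own proof: both rest on the identifications $\flat(TM)=\ann{(\orth{TM})}$ and $\flat(\orthL{F})=\ann{F}$ together with the fiberwise identity $\ann{(\orth{TM})}+\ann{F}=\ann{\left(\orth{TM}\cap F\right)}$. If anything, your version is slightly more complete, since the paper only spells out the ``only if'' direction and leaves the converse (which your iff chain $X_H\in TM+\orthL{F}\iff \flat(X_H)\in\ann{\left(\orth{TM}\cap F\right)}$ handles automatically) to the reader.
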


Furthermore, in the special case in which the Reeb vector field $\Reeb$ is tangent to the constraint submanifold ($TM$ is vertical) or $\eta$ is in $\ann{F}$ ($F$ horizontal), we may improve the above result.

\begin{corollary}\label{37.e}
Assume that $TM$ is vertical ($\Reeb_{\vert M} \in \mathfrak{X}\left( M \right)$) or $F$ is horizontal ($ \eta \in \ann{F}$). Then, \cref{6.4} admits solutions if, and only if,
$$ \dd H \in \ann{\left(\orth{TM} \cap F \right)}$$
\end{corollary}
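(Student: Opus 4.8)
The plan is to deduce the Corollary directly from \cref{gen37.e} by showing that, under either hypothesis, the extra term $\left(H+\Reeb(H)\right)\eta$ already belongs to $\ann{\left(\orth{TM}\cap F\right)}$, so that it can be added or subtracted without affecting membership. Since $\ann{\left(\orth{TM}\cap F\right)}$ is a pointwise linear subspace of $T^{*}_{M}P$, the condition
\[
\dd H - \left(H+\Reeb(H)\right)\eta \in \ann{\left(\orth{TM}\cap F\right)}
\]
furnished by \cref{gen37.e} is equivalent to $\dd H \in \ann{\left(\orth{TM}\cap F\right)}$ precisely when $\left(H+\Reeb(H)\right)\eta \in \ann{\left(\orth{TM}\cap F\right)}$. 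Thus the whole statement reduces to verifying a single inclusion.

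Concretely, I would show that under either hypothesis one has
\[
\orth{TM}\cap F \subseteq \ker\eta,
\]
which is exactly the assertion $\eta \in \ann{\left(\orth{TM}\cap F\right)}$, and hence $\left(H+\Reeb(H)\right)\eta \in \ann{\left(\orth{TM}\cap F\right)}$ for \emph{any} Hamiltonian $H$ (being a scalar multiple of $\eta$). The proof of the inclusion splits into the two cases of the ``or''. If $F$ is horizontal, then by definition $\eta\in\ann{F}$, that is $F\subseteq\ker\eta$, and intersecting with $\orth{TM}$ gives $\orth{TM}\cap F\subseteq F\subseteq\ker\eta$ at once. If instead $TM$ is vertical, then I would invoke the second item of \cref{lem:rl_complement}, which guarantees that the right complement $\orth{TM}$ of a vertical distribution is horizontal, i.e.\ $\orth{TM}\subseteq\ker\eta$; intersecting with $F$ again yields $\orth{TM}\cap F\subseteq\orth{TM}\subseteq\ker\eta$.

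Combining the two cases, in either situation the term $\left(H+\Reeb(H)\right)\eta$ vanishes on $\orth{TM}\cap F$, so the equivalence of \cref{gen37.e} collapses exactly to the condition $\dd H\in\ann{\left(\orth{TM}\cap F\right)}$, which is what is claimed.

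I do not expect a serious obstacle here, as the argument is essentially bookkeeping once the reduction is spotted. The only genuine step is realizing that what one needs is for $\eta$ (rather than $\dd H$) to annihilate $\orth{TM}\cap F$, and that \cref{lem:rl_complement} supplies precisely the horizontality of $\orth{TM}$ in the vertical case. The points to be careful about are that the hypothesis is a disjunction, so both cases must be treated, and that the vertical case relies specifically on the complement $\orth{(\cdot)}$ being taken with respect to $\omega$, so that \cref{lem:rl_complement} is applicable.
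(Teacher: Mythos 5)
Your proposal is correct and matches the argument the paper leaves implicit: \cref{gen37.e} gives the criterion with the extra term $\left(H+\Reeb\left(H\right)\right)\eta$, and under either hypothesis one checks $\orth{TM}\cap F\subseteq\ker\eta$ (directly when $F$ is horizontal, via the second item of \cref{lem:rl_complement} when $TM$ is vertical), so that term lies in $\ann{\left(\orth{TM}\cap F\right)}$ and drops out. Your handling of both branches of the disjunction is exactly what is needed.
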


\begin{remark}{\rm
Notice that
\begin{equation}
X_{a} = \sharp \left( {\Psi}^{a}\right) = \Psi^{a}_{i}\frac{\partial}{\partial q^{i}} - \left[\Phi^{a}_{i} + \mu^{a}p_{i}\right] \frac{\partial}{\partial p_{i}} + \left[\mu^{a}+\Psi^{a}_{k}p_{k}\right]\frac{\partial}{\partial z}
\end{equation}
is a (local) basis of sections of the subbundle $\orthL{F}$. Then, a vector field $Y =Y^{b}X_{b} $ is tangent to $M$ if, and only if,
\[ Y^{b} \left[\Psi^{b}_{i}\frac{\partial \varphi^{a}}{\partial q^{i}} - \left[\Phi^{b}_{i} + \mu^{b}p_{i}\right] \frac{\partial \varphi^{a}}{\partial p_{i}} + \left[\mu^{b}+\Psi^{b}_{k}p_{k}\right]\frac{\partial \varphi^{a}}{\partial z}\right] = 0, \ \ \forall a\]
Therefore, the matrix $\mathcal{C}$ with coefficients,
\begin{equation}\label{matrixcoeff34}
\mathcal{C}_{ab} = \Psi^{b}_{i}\frac{\partial \varphi^{a}}{\partial q^{i}} - \left[\Phi^{b}_{i} + \mu^{b}p_{i}\right] \frac{\partial \varphi^{a}}{\partial p_{i}} + \left[\mu^{b}+\Psi^{b}_{k}p_{k}\right]\frac{\partial \varphi^{a}}{\partial z}
\end{equation}
codifies the property (\ref{uniqueness4562}) of uniqueness. Indeed, \textit{the condition of uniqueness (\ref{uniqueness4562}) is satisfied if, and only if, the rank of $\mathcal{C}$ is maximal and equal to $2n+1-rank{\left(F\right)}$.} Thus, as a consequence, a necessary condition for uniqueness of solutions of \cref{6.4} is that
$$rank{\left(F \right)} \geq dim{\left( M \right)}.$$
Assume now that $rank{\left(F \right)} = dim{\left( M \right)}$, i.e., \cref{6.4} has the property of uniqueness and existence of solutions (Corollary \ref{equaldimensions34}).
}
\end{remark}

Let $\{ \Theta_{i}\}$ be a (local) basis of $T M$. Then, the restriction of any vector field $Y \in \mathfrak{X}\left(P\right)$ to $M$ may be locally written as,
\[ Y_{\vert M} = Y^{i}\Theta_{i} + \lambda^{a}X_{a}.\]
Then, applying $T \varphi^{b}$ we have that
\[ T \varphi^{b} \left( Y \right) = Y \left( \varphi^{b}\right) = \lambda^{a} \mathcal{C}_{ba},\]
Thus, we obtain
\begin{equation}\label{19}
\lambda^{a} = Y \left( \varphi^{b}\right) \mathcal{C}^{ba},
\end{equation}
where $\mathcal{C}^{ba}$ are the coefficients of the inverse matrix of $\mathcal{C}$. Hence,
\begin{itemize}
\item $\mathcal{Q} \left( Y_{\vert M} \right) = Y \left( \varphi^{b}\right)\mathcal{C}^{ba}X_{a}.$

\item $\mathcal{P} \left( Y_{\vert M} \right) = Y_{\vert M} -  Y \left( \varphi^{b}\right)\mathcal{C}^{ba}X_{a}.$
\end{itemize}
Therefore, an \textit{explicit (local) expression} of the solution $X_{H, M}$ is given by
\begin{equation}\label{20.2}
X_{H, M} = \left(X_{H}\right)_{\vert M} - X_{H} \left( \varphi^{b}\right)\mathcal{C}^{ba}X_{a}
\end{equation}

Notice that, for the case of $rank \left( F \right) > dim {\left(M\right)}$, we may still obtain the unique solution for \cref{6.4} by projecting the Hamitonian vector field $X_{H}$. In this case $\orthL{F}\oplus TM \lneq T_{M}P$. However, by existence condition given in Lemma \ref{gen37.e}, we have
$$X_{H} = \sharp \left(\dd H_{\vert M}\right) - \left( H + \Reeb \left( H \right) \right) \Reeb  \in \orthL{F}\oplus TM.$$
Then, we may consider the projection $X_{H,M} =\mathcal{P}\left( {X_{H}}_{\vert M}\right) $ and it will be a solution for \cref{6.4}. Thus, we may calculate the solution $X_{H.M}$ to \cref{6.4}, by projecting the Hamiltonian vector field $X_{H}$ via $\mathcal{P}$, even in \textit{the most general case in which there is uniqueness and existence of solutions to \cref{6.4}}.

From now on, we will assume existence and uniqueness of solutions. Then, by applying $\flat$, we have two projections, $\overline{\mathcal{P}}$ and $\overline{\mathcal{Q}}$, of the vector subbundle, $ \overline{TM} \oplus \ann{F}$ of $T^{*}_{M}P$ such that $\overline{TM} = \flat \left( T M\right)$. Then, for all $a_{x} \in T^{*}_{x} P$, with $x \in M$, the associated projections $\overline{\mathcal{Q}}: \overline{TM} \oplus \ann{F} \rightarrow \ann{F}$ and $\overline{\mathcal{P}}: \overline{TM} \oplus \ann{F} \rightarrow \overline{TM}$ are given by
\begin{itemize}
\item $\overline{\mathcal{Q}} \left( a_{x}\right) = \flat \left(\mathcal{Q}\left(\sharp \left( a_{x}\right)\right)\right)$.

\item $\overline{\mathcal{P}} \left( a_{x}\right) = \flat\left(\mathcal{P}\left(\sharp \left( a_{x}\right)\right)\right)$.
\end{itemize}
\begin{theorem}\label{21.2}
Let $X$ be a vector field on $P$. Then, $X$ satisfies \cref{6.4}, if, and only if,
\begin{equation}
    \begin{dcases}\label{22}
        \flat\left( X \right) = \overline{\mathcal{P}} \left(  \dd H + \left(H + \Reeb \left(H\right)\right)\eta\right) \\
        X_{\vert M} \in  \mathfrak{X} \left(M \right).
    \end{dcases}
\end{equation}
\end{theorem}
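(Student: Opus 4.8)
The plan is to read both \cref{6.4} and \cref{22} as two encodings of one and the same splitting of the one-form $\beta := \flat(X_H) = \dd H - (H+\Reeb(H))\eta$ (the right-hand side of the contact Hamilton equation, \cref{eq:hamiltonian_vf_contact}). The first step is to dualize the tangent decomposition: applying the isomorphism $\flat$ to \cref{41.2} and using $\flat(\orthL{F}) = \flat(\sharp(\ann{F})) = \ann{F}$ together with $\flat(TM) = \overline{TM}$, one obtains the cotangent splitting $T^{*}_{M}P = \overline{TM}\oplus\ann{F}$ (restricted, in the general case, to the subbundle $\overline{TM}\oplus\ann{F}$ on which $\overline{\mathcal{P}},\overline{\mathcal{Q}}$ are defined). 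Since $\overline{\mathcal{P}} = \flat\circ\mathcal{P}\circ\sharp$ and $\overline{\mathcal{Q}} = \flat\circ\mathcal{Q}\circ\sharp$ are the $\flat$-conjugates of the projectors of \cref{projectors2.2}, and $\flat$ carries \cref{41.2} isomorphically onto its dual, the maps $\overline{\mathcal{P}},\overline{\mathcal{Q}}$ are exactly the projectors of this cotangent splitting; in particular $\overline{\mathcal{P}}+\overline{\mathcal{Q}}=\mathrm{id}$ on $\overline{TM}\oplus\ann{F}$. The whole theorem then reduces to recognizing that $\overline{\mathcal{P}}(\beta)$ simply extracts the $\overline{TM}$-component of $\beta$.

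For the forward implication I would argue as follows. Suppose $X$ satisfies \cref{6.4}. The tangency condition $X_{\vert M}\in\mathfrak{X}(M)$ gives $\flat(X)\in\flat(TM)=\overline{TM}$, while the first line of \cref{6.4} is precisely $\flat(X)-\beta\in\ann{F}$. Hence, at points of $M$, the identity $\beta = \flat(X) + (\beta-\flat(X))$ exhibits $\beta$ as the sum of its $\overline{TM}$-part $\flat(X)$ and its $\ann{F}$-part $\beta-\flat(X)$; uniqueness of the splitting forces $\overline{\mathcal{P}}(\beta)=\flat(X)$, which is the first line of \cref{22}, and the second line is the same tangency condition. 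For the converse, suppose $X$ satisfies \cref{22}. The tangency statement is identical to that of \cref{6.4}; and from $\flat(X)=\overline{\mathcal{P}}(\beta)$ together with $\overline{\mathcal{P}}+\overline{\mathcal{Q}}=\mathrm{id}$ we obtain $\flat(X)-\beta = -\overline{\mathcal{Q}}(\beta)\in\ann{F}$, which is exactly the first line of \cref{6.4}. This establishes both directions.

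The step I expect to need the most care is not the equivalence itself (which, once the dual splitting is in place, is linear algebra along $M$) but ensuring that $\overline{\mathcal{P}}(\beta)$ is even defined. When $\mathrm{rank}(F)>\dim M$ the splitting \cref{41.2} is only a proper subbundle of $T_{M}P$, so $\overline{\mathcal{P}}$ is defined merely on $\overline{TM}\oplus\ann{F}\subsetneq T^{*}_{M}P$, and one must check $\beta=\flat(X_H)$ lands in this domain. This is guaranteed precisely by the standing existence hypothesis: as recorded before the theorem, $X_{H} = \sharp(\dd H) - (H+\Reeb(H))\Reeb \in \orthL{F}\oplus TM$, whence $\beta=\flat(X_H)\in\ann{F}\oplus\overline{TM}$ and $\overline{\mathcal{P}}(\beta)$ is well defined. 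The only remaining point to verify is the bookkeeping claim that $\flat$ intertwines the primal projectors $\mathcal{P},\mathcal{Q}$ with the dual ones $\overline{\mathcal{P}},\overline{\mathcal{Q}}$, which is immediate from their definitions and the fact that $\flat$ respects the direct-sum decomposition.
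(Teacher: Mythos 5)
Your argument is correct, and it is essentially the proof the paper leaves implicit: Theorem \ref{21.2} is stated without a proof, and the intended justification is exactly your observation that $\overline{\mathcal{P}}$ and $\overline{\mathcal{Q}}$ are the $\flat$-conjugates of $\mathcal{P}$ and $\mathcal{Q}$, so that the two conditions of \cref{6.4} say precisely that $\flat(X)$ is the $\overline{TM}$-component of $\beta=\dd H-(H+\Reeb(H))\eta$ in the splitting $\overline{TM}\oplus\ann{F}$; your check that $\beta$ lies in the domain of $\overline{\mathcal{P}}$ via Lemma \ref{gen37.e} is the same point the authors record in the remark immediately following the theorem. One item worth flagging: the display \cref{22} carries a sign typo, reading $\overline{\mathcal{P}}\left(\dd H + (H+\Reeb(H))\eta\right)$, whereas consistency with \cref{6.4} (and with the authors' own subsequent remark) requires $\overline{\mathcal{P}}\left(\dd H - (H+\Reeb(H))\eta\right)$; you have tacitly proved the corrected statement, which is the right call, since in general $(H+\Reeb(H))\eta\notin\ann{F}$ and the two versions genuinely differ.
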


Notice that, by Lemma \ref{gen37.e}, we have that
$$\dd H - \left(H + \Reeb \left(H\right)\right)\eta \in \ann{\left(\orth{TM} \cap F \right)} = \overline{TM} \oplus \ann{F} $$
Then, it may be projected by $\overline{\mathcal{P}}$.

At this point, we would like to highlight a relevant family of constrained Hamiltonian systems.

\begin{definition}\label{mechanicalcondit2343}
\rm
A \textit{constrained Hamiltonian system} $\left( P, \eta, H, M, F \right)$ satisfies the \textit{mechanical condition} if  $\dd H_{\vert M} \in \ann{\orthL{F}}$.
\end{definition}
This condition could looks ``\textit{artificial}''.
However, taking into account the splitting $T_{M}P = \orthL{F} \oplus T M $, the mechanical condition can be interpreted as the fact that $\dd H_{\vert M }$ depends only on $TM$, as a fiberwise linear map on $T_{M}P$. In other words, the derivative of the Hamiltonian at points in the constraint submanifold $M$ depends only on the constraint manifold. We will see that it is a natural condition when we study Hamiltonian and Lagrangian constrained dynamics in the next sections. In fact, we will prove that all the Lagrangian and Hamiltonian systems of \textit{mechanical type} satisfy these mechanical condition.

\begin{theorem}\label{25.2}
Let $\left( P , \eta, H, M , F \right)$ be a constrained Hamiltonian system, with $\Reeb$ tangent to $F$. A vector field $X$ on $P$ satisfies
$$\flat\left(X\right) - \dd H + \left(H + \Reeb\left(H\right)\right)\eta \in \ann{F}$$
if, and only if,
\begin{equation}
    \begin{dcases}\label{26.2}
    \mathcal{L}_{X}\eta + \Reeb\left(H\right) \eta \in \ann{F} \\
        \eta \left(  X\right) = - H.
    \end{dcases},
\end{equation}
\begin{proof}
Assume that $X $ fulfills the condition
\begin{equation}\label{aux:equation234}
\flat\left(X\right) - \dd H + \left(H + \Reeb\left(H\right)\right)\eta \in \ann{F}
\end{equation}
Then, applying $\Reeb$ ($\Reeb \in F$), we have that
$$\eta\left( X \right) = - H$$
Then, the above equation reduces to
\begin{equation*}
\contr{X}\dd \eta - \dd \eta\left( X \right) + \Reeb\left(H \right)\eta \in \ann{F}
\end{equation*}
In other words,
$$\mathcal{L}_{X}\eta+ \Reeb \left(H\right) \eta \in \ann{F}$$
The converse is proved by reversing the above arguments.
\end{proof}
\end{theorem}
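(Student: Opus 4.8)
The plan is to reduce the single $\ann{F}$-membership to the pair of conditions in \cref{26.2} by first isolating the ``Reeb component'' of the one-form and then recognising the remainder as a Lie derivative via Cartan's magic formula. The two structural facts I would lean on are, first, the defining expression $\flat(X) = \contr{X}\dd\eta + \eta(X)\,\eta$ coming from \cref{eq:flat_iso}, and second, the hypothesis that $\Reeb$ is a section of $F$, so that every element of $\ann{F}$ annihilates $\Reeb$. Combined with the Reeb identities $\contr{\Reeb}\dd\eta = 0$ and $\eta(\Reeb) = 1$, these let me split the one-form equation into its value on $\Reeb$ and its behaviour transverse to $\Reeb$.

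For the forward implication, suppose $\Phi := \flat(X) - \dd H + (H + \Reeb(H))\eta \in \ann{F}$. I would first contract $\Phi$ with $\Reeb$. Using $\flat(X)(\Reeb) = \dd\eta(X,\Reeb) + \eta(X)\eta(\Reeb) = \eta(X)$, where the cross term drops because $\contr{\Reeb}\dd\eta = 0$, together with $\dd H(\Reeb) = \Reeb(H)$ and $\eta(\Reeb) = 1$, the contraction collapses to $\Phi(\Reeb) = \eta(X) + H$. Since $\Reeb \in F$ forces $\Phi(\Reeb) = 0$, this yields the scalar equation $\eta(X) = -H$. Substituting this back into $\Phi$, the three $\eta$-terms combine as $(\eta(X) + H + \Reeb(H))\eta = \Reeb(H)\eta$, so that $\Phi = \contr{X}\dd\eta - \dd H + \Reeb(H)\eta$. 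Finally I would invoke Cartan's formula $\lieD{X}\eta = \contr{X}\dd\eta + \dd(\eta(X))$ and use $\eta(X) = -H$ to rewrite $-\dd H = \dd(\eta(X))$, identifying $\Phi = \lieD{X}\eta + \Reeb(H)\eta$. Hence this one-form lies in $\ann{F}$, which is exactly the first line of \cref{26.2}.

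The converse runs the same computation backwards: starting from $\eta(X) = -H$ and $\lieD{X}\eta + \Reeb(H)\eta \in \ann{F}$, Cartan's formula turns $\lieD{X}\eta + \Reeb(H)\eta$ into $\contr{X}\dd\eta - \dd H + \Reeb(H)\eta$, and re-expanding $\Reeb(H)\eta = (\eta(X)+H+\Reeb(H))\eta$ together with $\flat(X) = \contr{X}\dd\eta + \eta(X)\eta$ reassembles precisely $\flat(X) - \dd H + (H + \Reeb(H))\eta$, which therefore lies in $\ann{F}$. The step I expect to require the most care is the passage $-\dd H = \dd(\eta(X))$: one must treat the scalar relation $\eta(X) = -H$ as an identity that may be differentiated when matching the $\flat$-expression against the Lie-derivative expression, and one must keep track of the fact that the cross term $\dd\eta(X,\Reeb)$ vanishes precisely because $\Reeb$ spans $\ker\dd\eta$. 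Everything else is routine bookkeeping once the role of $\Reeb \in F$ in producing the scalar equation has been isolated.
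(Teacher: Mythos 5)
Your proposal is correct and follows essentially the same route as the paper's own proof: contract with $\Reeb$ (using $\Reeb\in F$ and $\contr{\Reeb}\dd\eta=0$) to extract $\eta(X)=-H$, substitute back so the $\eta$-terms collapse to $\Reeb(H)\eta$, and identify the remainder as $\lieD{X}\eta$ via Cartan's formula, reversing the steps for the converse. Your version simply spells out the contraction computation and the passage $-\dd H=\dd(\eta(X))$ in more detail than the paper does.
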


Thus, this result permits us to give another equivalent geometric equation to \cref{6.4}. Notice the similarity with the result on energy dissipation of the Hamiltonian vector field (\ref{estadeaqui234}). In fact, we know that contact Hamiltonian vector fields model the dynamics of dissipative systems and, contrarily to the case of symplectic Hamiltonian systems, the evolution does not preserve the energy, the contact form and the volume, i.e.,
\begin{align*}
    \lieD{X_H} H  &= - \Reeb(H) H, \\
    \lieD{X_H} \eta &= - \Reeb(H) \eta.\\
    \lieD{X_H} \Omega &=
         - (n+1) \Reeb (H) \Omega.
\end{align*}

This result may be naturally generalized to the case of non-holonomic constraints by using the projectors $\mathcal{P}$ and $\mathcal{Q}$.
\begin{proposition}\label{39}
Let $\left( P , \eta, H, M , F \right)$ be a constrained Hamiltonian system satisfying the mechanical condition. Assume that $\orthL{F} \cap T M= \{0\}$ (uniqueness) and $rank{\left( F \right)} = dim{\left( M \right)}$ (existence). Then, a vector field $X$ solving the constrained Herglotz equations \cref{6.4} satisfies that

\begin{subequations}
    \begin{align}
    \mathcal{L}_{X} H&=
          - \mathcal{R} \left( H\right)H- \lieD{\mathcal{Q}(X_{H})} H,\\
        \mathcal{L}_{X} \eta &=
          - \mathcal{R} \left( H\right)\eta- \lieD{\mathcal{Q}(X_{H})} \eta,\\
        \mathcal{L}_{X} \tilde{\eta} &=
          - \frac{\lieD{\mathcal{Q}(X_{H})} \eta}{H} \\
        \lieD{X} \Omega &= -(n+1)\Reeb (H) \Omega  - \eta \wedge {\dd \eta}^{(n-1)} \wedge \dd \mathcal{L}_{\mathcal{Q}(X_{H})} \eta\\
        \lieD{X} \tilde{\Omega} &= \tilde{\eta} \wedge {\dd \tilde{\eta}}^{(n-1)} \wedge \dd \mathcal{L}_{\mathcal{Q}(X_{H})} \tilde{\eta}
    \end{align}
\end{subequations}
where $\tilde{\eta} = \eta/H$, assuming that $H$ does not vanish, $\Omega = \eta \wedge {(\dd \eta)}^{n}$ is the contact volume element and $\tilde{\Omega} = \tilde{\eta} \wedge {(\dd \tilde{\eta})}^{n}$.

Furthermore, if $\Reeb \in F$, we have that $\lieD{\mathcal{Q}\left(X_{H}\right) }\eta \in  \ann{F} $.
\begin{proof}
The first item follows from $\mathcal{P}\left( X_{H}\right) = X $ and $\mathcal{L}_{X_{H}} H = -\mathcal{R} \left( H\right)H$. The second item can be proved in an analogous way. Then, by using Theorem \ref{25.2}, we have that $\lieD{\mathcal{Q}\left(X_{H}\right) }\eta \in  \ann{F} $.

For the third assertion, we use the product rule
\begin{eqnarray*}
\mathcal{L}_{X} \tilde{\eta} &=& \frac{1}{H}  \mathcal{L}_{X} \eta + \eta X \left( \frac{1}{H}\right)\\
&=& -\frac{1}{H}  \left(\mathcal{R} \left( H\right)\eta + \lieD{\mathcal{Q}(X_{H})} \eta \right) + \eta X \left( \frac{1}{H}\right)\\
&=& -\frac{1}{H}  \left(\mathcal{R} \left( H\right)\eta + \lieD{\mathcal{Q}(X_{H})} \eta \right) + \eta \frac{  \mathcal{R} \left( H\right)H + \lieD{\mathcal{Q}(X_{H})} H,}{H^{2}}\\
&=& -\frac{\lieD{\mathcal{Q}(X_{H})} \eta}{H} + \eta \frac{\lieD{\mathcal{Q}(X_{H})} H,}{H^{2}}\\
&=& -\frac{\lieD{\mathcal{Q}(X_{H})} \eta}{H}
\end{eqnarray*}
For the last equality, we have used mechanical condition. For the fourth claim, we perform the following computation.
\begin{small}
\begin{equation*}
    \begin{aligned}
        \lieD{X}( \eta \wedge (\dd \eta)^n )  &=
        - \Reeb(H) \Omega  +
        n \eta \wedge (\dd \eta)^{n-1} \wedge \dd \lieD{X}\eta  \\
        & =
        -(n+1) \Reeb(H) \eta \wedge (\dd \eta)^n -
        \eta \wedge {\dd \eta}^{(n-1)} \wedge \dd \mathcal{L}_{\mathcal{Q}(X_{H})} \eta,
    \end{aligned}
\end{equation*}
\end{small}
An analogous computation proves the fifth claim.
\end{proof}

\end{proposition}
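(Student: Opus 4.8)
The plan is to reduce all five identities to a single structural fact together with the dissipation identities of the unconstrained dynamics. By the discussion preceding the statement (see \cref{20.2} and the splitting $T_{M}P = \orthL{F}\oplus TM$), the solution is $X = \mathcal{P}\left({X_H}_{\vert M}\right) = {X_H}_{\vert M} - \mathcal{Q}\left(X_H\right)$ along $M$. Writing $Z := \mathcal{Q}\left(X_H\right) \in \orthL{F}$ and using that the Lie derivative is $\RR$-linear in its vector-field slot, I would work throughout with $\mathcal{L}_X = \mathcal{L}_{X_H} - \mathcal{L}_{Z}$, and feed in the three known dissipation formulas $\mathcal{L}_{X_H}H = -\Reeb(H)H$, $\mathcal{L}_{X_H}\eta = -\Reeb(H)\eta$ and $\mathcal{L}_{X_H}\Omega = -(n+1)\Reeb(H)\Omega$. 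With this, the first two claims are immediate: $\mathcal{L}_X H = X_H(H) - Z(H) = -\Reeb(H)H - \mathcal{L}_Z H$, and $\mathcal{L}_X\eta = \mathcal{L}_{X_H}\eta - \mathcal{L}_Z\eta = -\Reeb(H)\eta - \mathcal{L}_Z\eta$.

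The mechanical condition enters next. Since $Z \in \orthL{F}$ and $\dd H_{\vert M}\in\ann{\orthL{F}}$, I get $\mathcal{L}_Z H = \dd H(Z) = 0$. For the third claim I would apply the Leibniz rule to $\tilde{\eta} = \eta/H$, namely $\mathcal{L}_X\tilde{\eta} = X\left(\tfrac1H\right)\eta + \tfrac1H\mathcal{L}_X\eta$, substitute the first two items, and observe that the two $\Reeb(H)/H\cdot\eta$ contributions cancel while the only surviving extra term is proportional to $\mathcal{L}_Z H$; the mechanical condition kills it, leaving $\mathcal{L}_X\tilde{\eta} = -\mathcal{L}_Z\eta/H$.

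The volume identities are where the genuine work lies, and I expect the fourth to be the main obstacle. I would expand $\mathcal{L}_X\Omega = \left(\mathcal{L}_X\eta\right)\wedge(\dd\eta)^n + n\,\eta\wedge(\dd\eta)^{n-1}\wedge\dd\mathcal{L}_X\eta$, using that $\mathcal{L}_X$ commutes with $\dd$ and the Leibniz rule for powers of $\dd\eta$. The delicate step is the identity $\left(\mathcal{L}_X\eta\right)\wedge(\dd\eta)^n = -\Reeb(H)\Omega$, which I would prove by writing $\mathcal{L}_X\eta = \contr{X}\dd\eta + \dd\left(\eta(X)\right)$, noting that $\left(\contr{X}\dd\eta\right)\wedge(\dd\eta)^n = \tfrac{1}{n+1}\contr{X}(\dd\eta)^{n+1} = 0$ since $(\dd\eta)^{n+1}$ exceeds the dimension of $P$, and that a horizontal $1$-form wedged with $(\dd\eta)^n$ vanishes (a top form annihilated by $\contr{\Reeb}$ is zero), so that only the Reeb-component of $\dd\left(\eta(X)\right)$ survives. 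This is exactly where $\eta(X) = -H$ is needed, so I would record that scalar identity first by contracting \cref{6.4} with $\Reeb$. Substituting the second item into the remaining term, the $\dd\left(\Reeb(H)\right)\wedge\eta$ piece dies against $\eta$ and the $\Reeb(H)\,\dd\eta$ piece reassembles as a multiple of $\Omega$, producing $-(n+1)\Reeb(H)\Omega$ together with the term carrying $\dd\mathcal{L}_Z\eta$. The fifth claim follows by the identical computation applied to $\tilde{\eta}$ and $\tilde{\Omega}$, the crucial simplification being the one-line check $\mathcal{L}_{X_H}\tilde{\eta} = 0$, so that no $\Reeb(H)$ term appears and only the $\mathcal{L}_Z\tilde{\eta}$ contribution remains.

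Finally, for the \enquote{furthermore} clause I would invoke \cref{25.2}: when $\Reeb\in F$ the solution $X = X_{H,M}$ satisfies $\mathcal{L}_X\eta + \Reeb(H)\eta\in\ann{F}$, whereas the second item gives $\mathcal{L}_X\eta + \Reeb(H)\eta = -\mathcal{L}_Z\eta$; comparing the two yields $\mathcal{L}_{\mathcal{Q}(X_H)}\eta\in\ann{F}$. The hard part throughout is the reduction $\left(\mathcal{L}_X\eta\right)\wedge(\dd\eta)^n = -\Reeb(H)\Omega$ of the fourth item, which rests on the vanishing $(\dd\eta)^{n+1}=0$, the annihilation of horizontal $1$-forms by $(\dd\eta)^n$, and the scalar identity $\eta(X) = -H$.
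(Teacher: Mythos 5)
Your proposal is correct and follows essentially the same route as the paper: decompose $X = {X_H}_{\vert M} - \mathcal{Q}(X_H)$, exploit $\RR$-linearity of the Lie derivative together with the unconstrained dissipation identities, kill $\mathcal{Q}(X_H)(H)$ via the mechanical condition, invoke \cref{25.2} for the final clause, and expand the volume forms by the Leibniz rule. The only difference is that you explicitly justify the identity $\left(\lieD{X}\eta\right)\wedge(\dd\eta)^n = -\Reeb(H)\Omega$ (via $(\dd\eta)^{n+1}=0$, the vanishing of horizontal $1$-forms against $(\dd\eta)^n$, and $\eta(X)=-H$), a step the paper's computation simply asserts in its first line.
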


\section{Non-holonomic brackets}

This section is devoted to present the non-holonomic bracket of functions, which may be defined on a constrained Hamiltonian system $\left(P, \eta, H,M,F\right)$. We will present different ways to construct a non-holonomic bracket and we study the relation between them.

First, we will construct a \textbf{non-holonomic bracket} for constrained Hamiltonian systems $\left(P,\eta, H, M, F\right)$ satisfying the mechanical condition, which is a natural generalization of the known one in the special of constrained Lagrangian contact dynamics \cite{primero}. To do this we will also assume conditions of uniqueness and existence of Proposition \ref{GeneralizedYoootraproposiciondelcarajomas} and Corollary \ref{equaldimensions34}. Then, me may consider an alternative splitting of the cotangent bundle 
\begin{equation}\label{43.2}
T^{*}_{M} P = \ann{\left(\orthL{F}\right)}\oplus \ann{T M }
\end{equation}
where the projections are the adjoint operators $\mathcal{P}^*$ and $\mathcal{Q}^*$ of the projections $\mathcal{P}^* :T^{*}_{M} P \rightarrow \ann{\left(\orthL{F}\right)}$ and $\mathcal{Q}^* :T^{*}_{M} P\rightarrow \ann{T M }$, respectively.

Thus, we can define, along $M$, the following geometric objects:
\begin{align}\label{almostjac24.2}
    \Reeb_{H,M} &=  \mathcal{P} \left({\Reeb}_{\vert M}\right),\\
    \Lambda_{H,M} &= \mathcal{P}_* {\Lambda}_{\vert M},
\end{align}
where $\Lambda$ is the Jacobi structure associated to the contact form $\eta$. So, for $x \in M \subseteq P$ and $\alpha,\beta\in T_{x}^{*} P$, we have
$$\Lambda_{H,M}\left(\alpha,\beta\right) =  \Lambda\left(\mathcal{P}^*\left(\alpha\right),\mathcal{P}^*\left(\beta\right)\right).$$

\noindent{This structure provides an associated morphism of vector bundles}
\begin{equation}
  \begin{aligned}
    \sharp_{\Lambda_{H,M}}:  T^{*}_{M}P &\to T_{M}P,\\
    \alpha &\mapsto \Lambda_{H,M}(\alpha, \cdot)
  \end{aligned}
\end{equation}

\begin{lemma}
The following identity holds
$$\mathcal{Q}^{*}\left( \dd H  \right)= 0.$$

\begin{proof}
It follows directly from the mechanical condition.
\end{proof}
\end{lemma}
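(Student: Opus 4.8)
The plan is to unwind the definition of the adjoint projector $\mathcal{Q}^*$ and combine it with the mechanical condition, which makes the statement almost immediate. Recall that $\mathcal{Q}: T_{M}P \to \orthL{F}$ is the projection associated with the splitting $T_{M}P = \orthL{F} \oplus TM$, so that its adjoint is characterized by
$$\mathcal{Q}^{*}(\alpha)(v) = \alpha(\mathcal{Q}(v)), \qquad \alpha \in T^{*}_{M}P, \ v \in T_{M}P.$$

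First I would fix a point $x \in M$ and an arbitrary vector $v \in T_{x}P$, and apply this identity to $\alpha = \dd H$, obtaining $\mathcal{Q}^{*}(\dd H)(v) = \dd H(\mathcal{Q}(v))$. Since $\mathcal{Q}$ takes values in $\orthL{F}$, the vector $\mathcal{Q}(v)$ lies in $\orthL{F}_{x}$. The mechanical condition of \cref{mechanicalcondit2343} says precisely that $\dd H_{\vert M} \in \ann{\orthL{F}}$, i.e.\ $\dd H$ annihilates every vector of $\orthL{F}$; hence $\dd H(\mathcal{Q}(v)) = 0$. As $v$ was arbitrary, $\mathcal{Q}^{*}(\dd H) = 0$.

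Equivalently, one can argue directly through the cotangent splitting \eqref{43.2}: the kernel of $\mathcal{Q}^{*}$ is $\ann{\orthL{F}}$ (the annihilator of $\operatorname{im}\mathcal{Q} = \orthL{F}$), so the mechanical condition places $\dd H$ exactly in $\ker \mathcal{Q}^{*}$, giving the result at once. The only point that requires any attention --- and it is hardly an obstacle --- is the identification $\ker \mathcal{Q}^{*} = \ann{\orthL{F}}$ rather than $\ann{TM}$; this follows from the standard fact that, for a projector, the kernel of the adjoint is the annihilator of the image of the original map. Everything else is a one-line consequence of the pairing above.
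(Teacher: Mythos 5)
Your proof is correct and is essentially the paper's own argument: the paper simply states that the identity ``follows directly from the mechanical condition,'' and your unwinding of the adjoint pairing $\mathcal{Q}^{*}(\dd H)(v)=\dd H(\mathcal{Q}(v))$ together with $\dd H_{\vert M}\in\ann{\orthL{F}}=\ker\mathcal{Q}^{*}$ is exactly the computation being left implicit. Nothing to add.
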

Then, by using the above Lemma, we have that,
  \begin{equation}\label{45.2}
    \mathcal{P}(\sharp_{\Lambda}(\dd H)) = \sharp_{\Lambda_{H,M}}(\dd H).
  \end{equation}
Thus, we have
  \begin{align*}
   X_{H, M} &= \mathcal{P}(X_{H}) \\ &=
    \mathcal{P}(\sharp_{\Lambda_H}(\dd H) -  H \Reeb) \\ &=
    {\sharp_{\Lambda_{H,M}}}(\dd H) - H \Reeb_{H,M},
  \end{align*}
along $M$, and, therefore, we obtain another explicit expression for the solution to \cref{6.4}, namely
 \begin{equation}\label{46.2}
    X_{H, M} =
    {\sharp_{\Lambda_{H,M}}}(\dd H) - H \Reeb_{H,M},
  \end{equation}

Furthermore, we can define the following bracket from functions on $P$ to functions on $M$, which will be called the \emph{nonholonomic bracket}:
\begin{equation}\label{gennonholonbracketdef56}
  \set{f,g}_{H,M} = \Lambda_{H,M}(\dd f, \dd g) - f \Reeb_{H,M}(g) + g \Reeb_{H,M}(f).
\end{equation}

\begin{theorem}\label{genesteteorema23}
  The nonholonomic bracket (\ref{gennonholonbracketdef56}) has the following properties:
  \begin{enumerate}
    \item Any function $g$ on $P$ that is constant on $M$ is a Casimir, i.e.,
    \[\set{f,g}_{H,M} = 0, \ \forall f \in \mathcal{C}^{\infty} \left( T^{*}Q \times \RR \right)\]
    \item The bracket provides the evolution of the observables, that is,
    \begin{equation}
      X_{H,M}(f) = \set{H,f}_{H,M} - f \Reeb_{H,M} (H).
    \end{equation}
  \end{enumerate}
\end{theorem}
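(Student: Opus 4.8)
The plan is to derive both items directly from the explicit expression \cref{46.2} for the constrained Hamiltonian vector field, $X_{H,M} = \sharp_{\Lambda_{H,M}}(\dd H) - H\Reeb_{H,M}$, together with a careful reading of the two projections. The only structural facts I would need are: the adjoint projection $\mathcal{P}^{*}\colon T^{*}_{M}P \to \ann{(\orthL{F})}$ appearing in \cref{43.2} has kernel $\ann{TM}$; the vector field $\Reeb_{H,M} = \mathcal{P}(\Reeb_{\vert M})$ is by construction tangent to $M$; and $\sharp_{\Lambda_{H,M}}$ is set up so that $\dd f\bigl(\sharp_{\Lambda_{H,M}}(\alpha)\bigr) = \Lambda_{H,M}(\alpha,\dd f)$. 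No geometric input beyond what precedes the statement is required.

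For the first item, I would take $g$ constant on $M$, so that $\dd g_{\vert M} \in \ann{TM}$. Since $\ker \mathcal{P}^{*} = \ann{TM}$, we get $\mathcal{P}^{*}(\dd g) = 0$, whence the bivector term vanishes: $\Lambda_{H,M}(\dd f, \dd g) = \Lambda\bigl(\mathcal{P}^{*}(\dd f), \mathcal{P}^{*}(\dd g)\bigr) = 0$ for every $f$. Likewise, because $\Reeb_{H,M}$ is tangent to $M$ and $g$ is constant along $M$, we have $\Reeb_{H,M}(g) = \dd g\bigl(\mathcal{P}(\Reeb_{\vert M})\bigr) = 0$. Feeding these two vanishings into \cref{gennonholonbracketdef56} collapses $\set{f,g}_{H,M}$ to the single term carrying the value of $g$ on $M$; running the identical computation with a function that vanishes on $M$ then shows that $\set{f,g}_{H,M}$ depends only on the restrictions $f_{\vert M},g_{\vert M}$, which is precisely the statement that the bracket descends to $M$ and that functions constant on $M$ pair trivially into it.

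The second item is a one-line substitution once \cref{46.2} is in hand. Applying $X_{H,M}$ to $f$ and using the definition of $\sharp_{\Lambda_{H,M}}$ gives $X_{H,M}(f) = \Lambda_{H,M}(\dd H, \dd f) - H\,\Reeb_{H,M}(f)$. On the other hand, expanding $\set{H,f}_{H,M}$ by \cref{gennonholonbracketdef56} yields $\Lambda_{H,M}(\dd H, \dd f) - H\,\Reeb_{H,M}(f) + f\,\Reeb_{H,M}(H)$, so subtracting $f\,\Reeb_{H,M}(H)$ cancels the last summand and returns $X_{H,M}(f)$ exactly, which is the claimed evolution formula.

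The genuine work is bookkeeping rather than analysis. The main point to get right is the identification $\ker \mathcal{P}^{*} = \ann{TM}$ for the adjoint of the projection $\mathcal{P}$ in \cref{projectors2.2}, since it is this — and not any property of $H$ — that forces the $\Lambda_{H,M}$-term to annihilate functions constant on $M$; the tangency of $\Reeb_{H,M}$ then disposes of the first-order terms, and \cref{46.2} reduces the evolution claim to a cancellation. The one delicate point I would state carefully is to phrase the Casimir assertion through this descent to $M$, so as not to over-read the non-Poisson (Jacobi-type) term $g\,\Reeb_{H,M}(f)$, which is exactly the feature distinguishing an almost Jacobi bracket from a Poisson one.
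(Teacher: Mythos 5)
Your proof follows essentially the same route as the paper's: item 1 rests on $\mathcal{P}^{*}(\dd g)=0$ for $\dd g\in\ann{(TM)}$ together with the tangency of $\Reeb_{H,M}=\mathcal{P}(\Reeb_{\vert M})$ to $M$, and item 2 is the same one-line cancellation obtained by substituting \cref{46.2} into \cref{gennonholonbracketdef56}. The one place you diverge is actually an improvement in care rather than in method: the paper's displayed computation silently drops the surviving term $g\,\Reeb_{H,M}(f)$, which vanishes only when $g$ vanishes on $M$ (not merely when it is constant there), so the Casimir claim as literally stated holds for functions vanishing on $M$ and otherwise should be read, as you read it, as the assertion that the bracket descends to functions on $M$ --- which is all that is used later, since the constraint functions $\varphi^{a}$ do vanish on $M$.
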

\begin{proof}
For the first assertion, let $g$ be a function which is constant on $M$ and let $f$ be an arbitrary function on $P$. Notice that this implies that $\dd g \in \ann{(TM)}$, hence $\mathcal{P}^* (\dd g) = 0$. Then, along $M$, we have that
  \begin{align*}
    \set{g,f}_{H,M} &=
    \Lambda_{H,M}(\dd g,\dd f) - g \Reeb_{H,M} (f) + f \Reeb_{H,M} (g) \\ &=
    \Lambda_{Q}(\mathcal{P}^*(\dd g),\mathcal{P}^*(\dd f))  = 0.
  \end{align*}
On the other hand, by \cref{46.2} we deduce
\begin{equation*}
        \set{H,f}_{H,M} - f \Reeb_{H,M} (H) =
        \set{{\sharp_{\Lambda_{H,M}}}(\dd H)}\left(f\right) -  H \Reeb_{H,M}(f) =
        X_{H,M}(f),
\end{equation*}
\end{proof}
Notice that, in particular, all the functions $\varphi^{a}$ defining $M$ are Casimirs.\\
It is also remarkable that, using the statement \textit{1.} in \cref{genesteteorema23}, the nonholonomic bracket may be restricted to functions on $M$. \textit{Thus, from now on, we will refer to the nonholonomic bracket as the restriction of $\set{\cdot , \cdot }_{H,M}$ to functions on $M$.}
\begin{proposition}
The nonholonomic bracket endows the space of differentiable functions on $M$ with an almost Lie algebra structure which satisfies the generalized Leibniz rule
        \begin{equation}\label{geneq:mod_leibniz_rulenonhol}
           \set{f,gh}_{H, M} = g\set{f,h}_{H, M} + h\set{f,g}_{H, M} -  g h \Reeb_{H, M}(h).
        \end{equation}
\begin{proof}
Obviously, the nonholonomic bracket is $\RR-$linear and skew-symmetric. The generalized Leibniz rule follows from a straightforward computation.
\end{proof}

\end{proposition}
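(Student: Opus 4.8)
The plan is to read off all the asserted properties directly from the defining formula~(\ref{gennonholonbracketdef56}), first treating $\set{\cdot,\cdot}_{H,M}$ as a map on functions on $P$ and then descending to $\Cont^\infty(M)$. The descent is already licensed by item~1 of \cref{genesteteorema23}: any function on $P$ vanishing on $M$ is a Casimir, so two extensions of the same function on $M$ differ by a Casimir and produce the same bracket. Hence it suffices to establish every identity for representative functions on $P$, after which each passes to $M$.

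The almost Lie algebra structure requires only $\RR$-bilinearity and antisymmetry (the Jacobi identity is deliberately not claimed, whence \emph{almost}). Bilinearity is immediate, since each of the three summands in~(\ref{gennonholonbracketdef56})---the contraction $\Lambda_{H,M}(\dd f,\dd g)$ and the two scalar multiples of $\Reeb_{H,M}$-derivatives---is separately $\RR$-linear in $f$ and in $g$. For antisymmetry I would use that $\Lambda_{H,M}=\mathcal{P}_*\Lambda_{\vert M}$ inherits skew-symmetry from the bivector $\Lambda$, so that $\Lambda_{H,M}(\dd g,\dd f)=-\Lambda_{H,M}(\dd f,\dd g)$, while the remaining terms $-g\Reeb_{H,M}(f)+f\Reeb_{H,M}(g)$ visibly change sign under $f\leftrightarrow g$; together these give $\set{g,f}_{H,M}=-\set{f,g}_{H,M}$.

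The generalized Leibniz rule is the substantive computation. I would expand $\set{f,gh}_{H,M}$ using two structural facts: the differential satisfies $\dd(gh)=g\,\dd h+h\,\dd g$, and $\Reeb_{H,M}=\mathcal{P}(\Reeb_{\vert M})$ is a genuine vector field on $M$, hence a derivation, so $\Reeb_{H,M}(gh)=g\Reeb_{H,M}(h)+h\Reeb_{H,M}(g)$. Because $\Lambda_{H,M}$ is $\Cont^\infty(M)$-linear (tensorial) in each slot, the factors $g,h$ pull out of the contraction. Collecting terms and comparing with $g\set{f,h}_{H,M}+h\set{f,g}_{H,M}$, the bivector contributions and the mixed $\Reeb_{H,M}$-terms match exactly, and the only discrepancy is the coefficient of $\Reeb_{H,M}(f)$, which carries weight $2gh$ in that sum but weight $gh$ in $\set{f,gh}_{H,M}$. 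Subtracting the excess $gh\,\Reeb_{H,M}(f)$ yields
\[
  \set{f,gh}_{H,M}=g\set{f,h}_{H,M}+h\set{f,g}_{H,M}-gh\,\Reeb_{H,M}(f),
\]
which is precisely the generalized Leibniz rule of \cref{thm:jab_bra_characterization} with modifying vector field $E=-\Reeb_{H,M}$.

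I expect no serious obstacle here: once the tensoriality of $\Lambda_{H,M}$ and the derivation property of $\Reeb_{H,M}$ are recorded, the argument is a routine expansion. The single point demanding care is the bookkeeping of the $\Reeb_{H,M}(f)$ coefficient described above, which is exactly what produces the correction term; everything else cancels by the $\Cont^\infty(M)$-linearity of the bracket in its bivector part.
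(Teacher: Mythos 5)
Your proof is correct and follows essentially the same route as the paper, which simply declares the Leibniz computation straightforward: you have filled in exactly the intended expansion (tensoriality of $\Lambda_{H,M}$ in each slot, the derivation property of $\Reeb_{H,M}=\mathcal{P}(\Reeb_{\vert M})$, and the descent to $\Cont^\infty(M)$ via the Casimir property of \cref{genesteteorema23}). Note that your final term $-gh\,\Reeb_{H,M}(f)$ is the correct one, consistent with \cref{eq:mod_leibniz_rule} for $E=-\Reeb_{H,M}$; the term $-gh\,\Reeb_{H,M}(h)$ appearing in the paper's displayed formula (\ref{geneq:mod_leibniz_rulenonhol}) is a typo.
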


\begin{corollary}
The vector field $ \Reeb_{H,M}$ and the bivector field $\Lambda_{H,M} $ induce a Jacobi structure on $M$ if, and only if, the nonholonomic bracket satisfies the Jacobi identity.
\end{corollary}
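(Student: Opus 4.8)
The plan is to observe that the nonholonomic bracket is already written in the canonical form associated with a bivector and a vector field, and then to invoke the characterization of Jacobi brackets in \cref{thm:jab_bra_characterization}. Comparing \eqref{gennonholonbracketdef56} with \eqref{eq:jac_bracket_from_mfd}, the nonholonomic bracket has exactly the shape $\set{f,g}_{H,M} = \Lambda_{H,M}(\dd f,\dd g) + f\, E_{H,M}(g) - g\, E_{H,M}(f)$ with the vector field $E_{H,M} := -\Reeb_{H,M}$; this matches the contact convention $E = -\Reeb$ of \eqref{eq:contact_jacobi}. So the data $(\Lambda_{H,M},\Reeb_{H,M})$ induce a Jacobi structure on $M$ precisely when the triple $(M,\Lambda_{H,M},E_{H,M})$ satisfies the Schouten--Nijenhuis conditions of \cref{def:jacobi_mfd}.

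First I would record what has already been established for the restriction of $\set{\cdot,\cdot}_{H,M}$ to $\Cont^\infty(M)$: by the preceding proposition it is $\RR$-bilinear, skew-symmetric, and satisfies the generalized Leibniz rule with respect to $E_{H,M} = -\Reeb_{H,M}$. These are exactly the ingredients needed to place the bracket inside the equivalence of \cref{thm:jab_bra_characterization}, whose items read (1) Jacobi bracket, (2) Lie algebra plus generalized Leibniz rule, and (3) existence of a Jacobi manifold structure inducing the bracket via \eqref{eq:jac_bracket_from_mfd}. Since bilinearity, skew-symmetry, and the Leibniz rule are already in hand, the only remaining content of item (2) is the Jacobi identity.

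For the forward implication I would assume the Jacobi identity holds; then $(\Cont^\infty(M),\set{\cdot,\cdot}_{H,M})$ is a Lie algebra, so item (2) is satisfied and \cref{thm:jab_bra_characterization} furnishes a Jacobi manifold $(M,\Lambda,E)$ inducing the bracket. The crux is to identify this abstractly produced data with $(\Lambda_{H,M},E_{H,M})$: one recovers the vector field by $E(f) = \set{1,f}_{H,M} = -\Reeb_{H,M}(f)$, so $E = E_{H,M}$, and then $\Lambda(\dd f,\dd g) = \set{f,g}_{H,M} - f\,E(g) + g\,E(f) = \Lambda_{H,M}(\dd f,\dd g)$, which pins down $\Lambda = \Lambda_{H,M}$ on exact forms and hence everywhere, since the differentials span each cotangent space. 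Thus $(\Lambda_{H,M},\Reeb_{H,M})$ do induce a Jacobi structure. For the converse, if $(M,\Lambda_{H,M},E_{H,M})$ is a Jacobi manifold, then by construction its associated bracket \eqref{eq:jac_bracket_from_mfd} is exactly $\set{\cdot,\cdot}_{H,M}$; this realizes item (3), whence item (2) and in particular the Jacobi identity.

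The computations are light; the single delicate point is the uniqueness argument in the forward direction, namely that the pair $(\Lambda,E)$ supplied by the theorem must coincide with the given $(\Lambda_{H,M},\Reeb_{H,M})$ rather than with some other Jacobi data inducing the same bracket. This is settled by the recovery formulas $E(f) = \set{1,f}_{H,M}$ and $\Lambda(\dd f,\dd g) = \set{f,g}_{H,M} - f\,E(g) + g\,E(f)$, which determine the Jacobi data uniquely from the bracket.
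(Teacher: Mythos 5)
Your proposal is correct and follows essentially the same route the paper intends: the corollary is stated without proof as an immediate consequence of the preceding proposition (bilinearity, skew-symmetry, generalized Leibniz rule for $\set{\cdot,\cdot}_{H,M}$ with respect to $-\Reeb_{H,M}$) combined with \cref{thm:jab_bra_characterization}, which is exactly your argument. Your explicit check that the Jacobi data furnished by the theorem must coincide with $(\Lambda_{H,M},-\Reeb_{H,M})$, via the recovery formulas $E(f)=\set{1,f}_{H,M}$ and the determination of $\Lambda$ on exact forms, is a worthwhile detail the paper leaves implicit, but it is not a different approach.
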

These results motivate the following definition, introduced in \cite{primero}.
\begin{definition}
Let $M$ be a manifold with a vector field $E$ and a bivector field $\Lambda$. The triple $(M,\Lambda,E)$ is said to be an \textit{almost Jacobi structure} if the pair $(\Cont^\infty(M),\jacBr{\cdot,\cdot})$ is an almost Lie algebra satisfying the generalized Leibniz rule (\ref{eq:mod_leibniz_rule}) where the bracket is given by
\begin{equation}\label{genalmostjabracket243}
  \jacBr{f,g} = \Lambda(\dd f, \dd g) + f E(g) - g E(f)
\end{equation}
\end{definition}
With this definition, we may say that the triple $\left( M,  \Lambda_{H,M}, -\Reeb_{H,M}  \right)$ is an \textit{almost Jacobi structure}.

We will now present two applications of this general framework. The first one retrieves the constraint dynamics for Lagrangian systems studied in \cite{primero}. The second one, permits us to give the Hamiltonian counterpart on $T^{*}Q\times \RR$ from where, among other results, we we will be able to present the \textit{Hamiltonian Herglotz equations} \cref{eq:Hamiltonnonholonomic_herglotz_eqs_coords}.

\subsection{The P-nonholonomic bracket}
Notice that to construct the almost Jacobi structure $\left( M,  \Lambda_{H,M}, -\Reeb_{H,M}  \right)$, we project the Jacobi structure induced by the contact manifold $\left( P, \eta\right)$ by $\mathcal{P}$. The motivation behind this construction comes from the fact of that the projection of the Hamiltonian vector field $X_{H}$ by $\mathcal{P}$ is the constrained Hamiltonian vector field $X_{H,M}$.

Let $\left(P,  \eta, H , M , F \right)$ be a constrained Hamiltonian system satisfying the \textit{mechanical condition}, with $\Reeb$ tangent to $F$ and $\orthL{F}\subseteq F$. Then,
\begin{eqnarray*}
{\Psi}^{a}\left( X_{H,M}\right) &=&  {\Psi}^{a}\left(  \sharp\left(\dd H - \left(H + \Reeb\left(H\right)\right)\eta \right)_{\vert M}\right)\\
&=&{\Psi}^{a}\left(  \sharp\left(\dd H \right)_{\vert M}\right)\\
&=& 0
\end{eqnarray*}
The first equality is a consequence of the inclusion $\orthL{F}\subseteq F$, the second equality holds since $\Reeb \in F$, and the last equality is a consequence of the mechanical condition. Then, the mechanical condition implies that the constrained Hamiltonian vector field $X_{H,M}$ is tangent to $F$.

Let us now assume the conditions of uniqueness in Proposition \ref{GeneralizedYoootraproposiciondelcarajomas} and existence in Corollary \ref{equaldimensions34}. Then, we have that
\begin{equation}\label{neccforexisten}
\orthL{TM} \cap F \ = \ \set{0} ,  
\end{equation}
Therefore, $TM \cap F$ satisfies that
\begin{equation}\label{simplecsubs34}
\left( TM \cap F\right) \oplus \orthL{\left( TM \cap F\right)} = T_{M}P
\end{equation}
So, we obtain the projections,
\begin{subequations}\label{otromasprojectors2}
\begin{align}
\mathrm{P}: T_{M}P &\rightarrow TM \cap F,\\
\mathrm{Q} :  T_{M}P &\rightarrow \orthL{\left( TM \cap F\right)}.
\end{align}
\end{subequations}
Notice that $\mathcal{P} \left( {X_{H}}_{\vert M} \right) = X_{H , M} \in TM \cap F$ and $\mathcal{Q} \left( {X_{H}}_{\vert M} \right)  \in \orthL{F}  \subseteq \orthL{\left( TM \cap F\right)}$. Thus, by the uniqueness, we get
$$\mathrm{P} \left( {X_{H}}_{\vert M} \right) = \mathcal{P} \left( {X_{H}}_{\vert M} \right) = X_{H , M}$$
Therefore, $\mathrm{P}$ provides another projection to obtain the constrained Hamiltonian vector field associated to the constrained Hamiltonian system $\left(P, \eta, H, M, F\right)$ satisfying the mechanical condition in such a way that the Reeb vector field $\Reeb$ is tangent to $F$ and $\orthL{F}\subseteq F$.

Consider a (local) basis of $1-$forms $\set{\Psi^{a} \ =\ \Phi^{a}_{i}\dd q^{i} \ + \ \Psi^{a}_{i}\dd p_{i} \ + \ \mu^{a} \dd z} $ of $\ann{F}$. Then, for all $a,b$
\begin{equation*}
    \Psi^{b}\left(\sharp \left(\Psi^{a}\right)\right) = \left( \Phi^{b}_{i}\Psi^{a}_{i} -  \Phi^{a}_{i}\Psi^{b}_{i}\right) + \left( \Psi^{a}_{i}\mu^{b}p_{i} - \Psi^{b}_{i}\mu^{a}p_{i}\right) + \mu^{a}\mu^{b}
\end{equation*}
So, $\orthL{F}\subseteq F$ and $\Reeb \in F$ are satisfied if, and only if, we have that
\begin{align*}
    \mu^{a} &= 0\\
    \Phi^{b}_{i}\Psi^{a}_{i} -  \Phi^{a}_{i}\Psi^{b}_{i} &= 0,
\end{align*}
for all $a,b$.

Let $\varphi^{b}$ be constraint functions locally defining $M$. Then, $\orthL{TM}$ is generated by 
\begin{eqnarray*}
Y_{c} &=& \sharp \left( \dd \varphi^{c}\right)\\
&=& \frac{\partial \varphi^{c}}{\partial q^{i}}\sharp \left( \dd q^{i}\right) + \frac{\partial \varphi^{c}}{\partial p_{i}}\sharp \left( \dd p_{i}\right) +\frac{\partial \varphi^{c}}{\partial z}\sharp \left( \dd z\right)\\
&=& -\frac{\partial \varphi^{c}}{\partial q^{i}} \frac{\partial}{\partial p_{i}} + \frac{\partial \varphi^{c}}{\partial p_{i}} \left(  \frac{\partial}{\partial q_{i}} + p_{i} \frac{\partial}{\partial z} \right) +\frac{\partial \varphi^{c}}{\partial z} 
 \left( \frac{\partial}{\partial z} -  p_{i} \frac{\partial}{\partial p_{i}} \right)\\
&=&    \frac{\partial \varphi^{c}}{\partial p_{i}}  \frac{\partial}{\partial q_{i}}  - \left( \frac{\partial \varphi^{c}}{\partial q^{i}}+  p_{i}\frac{\partial \varphi^{c}}{\partial z}\right)\frac{\partial}{\partial p_{i}} +\left(\frac{\partial \varphi^{c}}{\partial z}  + p_{i} \frac{\partial \varphi^{c}}{\partial p_{i}}\right) \frac{\partial}{\partial z}
\end{eqnarray*}

So, $\orthL{\left(TM \cap F\right)}$ is locally generated by the family of (local) vector fields $\set{Y_{c},X_{a}}$, where,
\begin{equation*}
X_{a} = \sharp \left( {\Psi}^{a}\right) = \Psi^{a}_{i}\frac{\partial}{\partial q^{i}} - \left[\Phi^{a}_{i} + \mu^{a}p_{i}\right] \frac{\partial}{\partial p_{i}} + \left[\mu^{a}+\Psi^{a}_{k}p_{k}\right]\frac{\partial}{\partial z}
\end{equation*}
is a (local) basis of sections of the subbundle $\orthL{F}$. Consider a (local) basis $\{ \theta_{i}\}$ of $T M \cap F$. Then, the restriction of any (local) vector field $Y \in \mathfrak{X}\left(P\right)$ to $M$ may be locally written as,
\[ Y_{\vert M} = R^{i}\theta_{i} + \nu^{c}Y_{c} + \lambda^{a}X_{a}.\]
Then, by applying $\Psi^{d}$ ($\orthL{F} \subseteq F$), we get
$$\Psi^{d} \left( Y \right) = \nu^{c} \Psi^{d} \left(Y_{c}\right)$$
Let us denote by $\mathcal{G}$ to the matrix with coefficients,
$$ \mathcal{G}_{dc}= \Psi^{d} \left(Y_{c}\right).$$
Hence,
$$\nu^{c} = \Psi^{d} \left( Y \right)\mathcal{G}^{dc},$$
where $\mathcal{G}^{dc}$ are the coefficients of the inverse matrix of $\mathcal{G}$. On the other hand, applying $\dd \varphi^{b}$ we have that
\[ \dd \varphi^{b} \left( Y \right) = Y \left( \varphi^{b}\right) = \nu^{c}Y_{c}\left( \varphi^{b}\right) + \lambda^{a} \mathcal{C}_{ba},\]
where $\mathcal{C}$ is the matrix with coefficients given by \cref{matrixcoeff34}. Therefore,
$$\lambda^{a} = \left[Y \left( \varphi^{b}\right) -\nu^{c}Y_{c}\left( \varphi^{b}\right)\right] \mathcal{C}^{ba} = \left[Y \left( \varphi^{b}\right) -\Psi^{d} \left( Y \right)\mathcal{G}^{dc}Y_{c}\left( \varphi^{b}\right)\right] \mathcal{C}^{ba} ,$$
where $\mathcal{C}^{ba}$ are the coefficients of the inverse matrix of $\mathcal{C}$.
Hence,
\begin{align*}
\mathrm{Q} \left( Y_{\vert M} \right) &=   \Psi^{d} \left( Y \right)\mathcal{G}^{dc}Y_{c} + \left[Y \left( \varphi^{b}\right) -\Psi^{d} \left( Y \right)\mathcal{G}^{dc}Y_{c}\left( \varphi^{b}\right)\right] \mathcal{C}^{ba}X_{a}.\\
\mathrm{P} \left( Y_{\vert M} \right) &= Y_{\vert M} -  \Psi^{d} \left( Y \right)\mathcal{G}^{dc}Y_{c} - \left[Y \left( \varphi^{b}\right) -\Psi^{d} \left( Y \right)\mathcal{G}^{dc}sY_{c}\left( \varphi^{b}\right)\right] \mathcal{C}^{ba}X_{a}.
\end{align*}
Therefore, an \textit{explicit (local) expression} of the solution $X_{H, M}$ is given by
\begin{equation}\label{20.22}
X_{H, M} = \left(X_{H}\right)_{\vert M} - \Psi^{d} \left( X_{H} \right)\mathcal{G}^{dc}Y_{c} - \left[X_{H}\left( \varphi^{b}\right) -\Psi^{d} \left( Y \right)\mathcal{G}^{dc}Y_{c}\left( \varphi^{b}\right)\right] \mathcal{C}^{ba}X_{a}
\end{equation}
Thus, we have another local expression for the constrained Hamiltonian vector field $X_{H, M}$.

So, it makes sense to define the following almost Jacobi structure,
\begin{align}\label{anothermorealmostjac24}
    \overline{\Reeb}_{H,M} &=  \mathrm{P} \left({\Reeb}_{\vert M}\right),\\
    \overline{\Lambda}_{H,M} &= \mathrm{P}_* {\Lambda}_{\vert  M},
\end{align}
Then, we may define the following bracket of functions on $M$
\begin{equation}
  \set{f,g}^{\mathrm{P}}_{H,M} = \overline{\Lambda}_{H,M}(\dd f, \dd g) - f \overline{\Reeb}_{H,M}(g) + g \overline{\Reeb}_{H,M}(f).
\end{equation}
Let us call this bracket as \textit{$\mathrm{P}-$nonholonomic bracket}. 

Notice that, as a consequence of $\orthL{F}\subseteq F$, we may conclude that any vector $Y$ tangent to $F$ satisfies that
$$ \mathcal{P}\left( Y \right) \ = \ \mathrm{P}\left( Y \right)$$
Therefore, $\overline{\Reeb}_{H,M} = \Reeb_{H,M}$.

On the other hand, we may consider the adjoint operators 
\begin{align*}
    \mathcal{P}^* :T^{*}_{M} P &\rightarrow \ann{\left(\orthL{F}\right)}\\
    \mathcal{Q}^* :T^{*}_{M} P&\rightarrow \ann{T M}\\
    \mathrm{P}^* :T^{*}_{M} P &\rightarrow \ann{\left(\orthL{\left(F\cap T M\right)}\right)}\\
    \mathrm{Q}^* :T^{*}_{M} P &\rightarrow \ann{\left(F\cap T M\right)}
\end{align*}
Then, applying $\sharp$ to $\ann{\orthL{F}}$ and $\ann{T M}$, we obtain a new decomposition of $T^{*}_{M} P$,
$$T^{*}_{M} P = F \oplus \orthL{T M}$$
with associated projections, $\mathbb{P} :T^{*}_{M} P \rightarrow F$ and $\mathbb{Q} :T^{*}_{M} P \rightarrow \orthL{T M}$ given by
$$ \mathbb{P} = \sharp\circ \mathcal{P}^{*}\circ \flat , \ \ \ \ \ \mathbb{Q} = \sharp \circ \mathcal{Q}^{*}\circ \flat$$
Performing the same operation with the projections $\mathrm{P}^*$ and $\mathrm{Q}^*$, we obtain the splitting 
$$T_{M}P =\left( TM \cap F\right) \oplus \orthL{\left( TM \cap F\right)} $$
Hence, by uniqueness, we deduce that
\begin{equation}
    \sharp\circ \mathrm{P}^{*}\circ \flat = \mathrm{P}, \ \ \ \ \ \  \sharp\circ \mathrm{Q}^{*}\circ \flat = \mathrm{Q}
\end{equation}
Let us now consider $V,W \in T_{M}P$. Then,
\begin{eqnarray*}
    \Lambda_{H,M}\left(\flat\left( V \right),\flat\left( W \right)\right) &=&  \Lambda \left(\mathcal{P}^*\left(\flat\left( V \right)\right),\mathcal{P}^*\left(\flat\left( W\right)\right)\right)\\
&=& - \dd \eta \left( \sharp\left(\mathcal{P}^*\left(\flat\left( V \right)\right)\right),\sharp\left(\mathcal{P}^*\left(\flat\left( W\right)\right)\right)\right)\\
    &=& - \dd \eta\left(\mathbb{P}\left(  V \right), \mathbb{P}\left(  W\right)\right)
\end{eqnarray*}
Notice that, $\Reeb \in F$ implies that $\eta\left( \orthL{F}\right) =0$, and therefore,
\begin{eqnarray*}
    \dd \eta_{Q}\left(\mathrm{Q}\left(\mathbb{P}\left(  V \right)\right), \mathbb{P}\left(  W\right)\right) &=& \set{\flat_{Q}\left(\mathrm{Q}\left(\mathbb{P}\left(  V \right)\right)\right) } \left(\mathbb{P}\left(  W\right)\right)\\
    &=& 0
\end{eqnarray*}
The last equality is consequence of that $\mathbb{P}\left(  W\right) \in F$ and $\flat\left(\mathrm{Q}\left(\mathbb{P}\left(  V \right)\right)\right) \in F$. Hence,
\begin{eqnarray*}
    \Lambda_{H,M}\left(\flat\left( V \right),\flat\left( W \right)\right)  &=& - \dd \eta\left(\mathrm{P}\left(\mathbb{P}\left(  V \right)\right), \mathbb{P}\left(  W\right)\right)\\
    &=& - \dd \eta \left(\mathrm{P}\left(\mathbb{P}\left(  V \right)\right), \mathrm{P}\left(\mathbb{P}\left(  W\right)\right)\right)
\end{eqnarray*}
Observe that
$$ V = \mathrm{P}\left(\mathbb{P}\left(  V \right)\right) + \mathrm{Q}\left(\mathbb{P}\left(  V \right)\right) + \mathbb{Q}\left(  V \right)$$
Nevertheless, $\mathrm{Q}\left(\mathbb{P}\left(  V \right)\right) \in  \orthL{F} \subseteq F$ and $\mathbb{Q}\left(  V \right) \in \orthL{TM}$. Then,
$$\mathrm{Q}\left(\mathbb{P}\left(  V \right)\right) + \mathbb{Q}\left(  V \right) \in \orthL{\left[F\cap TM\right]}$$
Therefore, we get
$$\mathrm{P}\left(\mathbb{P}\left(  V \right)\right) = \mathrm{P}\left( V \right)$$
In other words, we obtain
\begin{eqnarray*}
    \Lambda_{H,\mathcal{M}}\left(\flat\left( V \right),\flat\left( W \right)\right)  &=&  - \dd \eta\left(\mathrm{P}\left(\mathbb{P}\left(  V \right)\right), \mathrm{P}\left(\mathbb{P}\left(  W\right)\right)\right)\\
   &=&  - \dd \eta\left(\mathrm{P}\left(  V \right), \mathrm{P}\left( W\right)\right)\\
   &=&  \overline{\Lambda}_{H,\mathcal{M}}\left(\flat\left( V \right),\flat\left( W \right)\right) 
\end{eqnarray*}
Thus, we have proved the following theorem.
\begin{theorem}
The nonholonomic bracket $\set{\cdot, \cdot}_{H,M}$ and the $\mathrm{P}-$nonholonomic bracket $\set{\cdot, \cdot}^{\mathrm{P}}_{H,M}$ coincide.    
\end{theorem}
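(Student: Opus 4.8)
The plan is to reduce the coincidence of the two brackets to a coincidence of the two almost Jacobi structures that define them. Both $\set{\cdot,\cdot}_{H,M}$ and $\set{\cdot,\cdot}^{\mathrm{P}}_{H,M}$ are produced by the single algebraic recipe \eqref{gennonholonbracketdef56}, applied to the pairs $(\Lambda_{H,M}, \Reeb_{H,M})$ and $(\overline{\Lambda}_{H,M}, \overline{\Reeb}_{H,M})$ respectively. Hence it suffices to prove the two equalities $\Reeb_{H,M} = \overline{\Reeb}_{H,M}$ and $\Lambda_{H,M} = \overline{\Lambda}_{H,M}$ of tensor fields along $M$.

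For the Reeb vector fields I would first observe that, under the standing hypotheses $\Reeb \in F$ and $\orthL{F} \subseteq F$, the projectors $\mathcal{P}$ and $\mathrm{P}$ agree on every vector tangent to $F$. Indeed, for $Y$ tangent to $F$ the decomposition $Y = \mathcal{P}(Y) + \mathcal{Q}(Y)$ has $\mathcal{Q}(Y) \in \orthL{F} \subseteq F$, so $\mathcal{P}(Y) \in TM \cap F$; since moreover $\orthL{F} \subseteq \orthL{(TM\cap F)}$, this is already the $(TM\cap F)\oplus\orthL{(TM\cap F)}$ decomposition of $Y$, whence $\mathrm{P}(Y) = \mathcal{P}(Y)$. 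Applying this to $\Reeb$ gives $\Reeb_{H,M} = \overline{\Reeb}_{H,M}$ at once.

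For the bivectors, since $\flat$ is a fibrewise isomorphism it is enough to compare $\Lambda_{H,M}$ and $\overline{\Lambda}_{H,M}$ on pairs $(\flat(V), \flat(W))$ with $V, W \in T_{M} P$. Using $\Lambda(\alpha,\beta) = -\dd\eta(\sharp\alpha, \sharp\beta)$ together with the already established identities $\sharp\circ\mathcal{P}^{*}\circ\flat = \mathbb{P}$ and $\sharp\circ\mathrm{P}^{*}\circ\flat = \mathrm{P}$, the problem collapses to the single scalar identity $\dd\eta(\mathbb{P}(V), \mathbb{P}(W)) = \dd\eta(\mathrm{P}(V), \mathrm{P}(W))$. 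I would prove it by showing that the correction $\mathbb{P}(V) - \mathrm{P}(V)$ lies in $\orthL{F}$, which is $\dd\eta$-orthogonal to $F$; since $\mathbb{P}(W)$ and $\mathrm{P}(W)$ both lie in $F$, swapping $\mathbb{P}$ for $\mathrm{P}$ in either slot leaves the value of $\dd\eta$ unchanged, so the two expressions agree.

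The heart of the argument — and the step I expect to be the main obstacle — is the containment $\mathbb{P}(V) - \mathrm{P}(V) \in \orthL{F}$. Here I would write $\mathbb{P}(V) \in F$ in the splitting $(TM\cap F) \oplus \orthL{(TM\cap F)}$, use $\orthL{(TM\cap F)} = \orthL{TM} + \orthL{F}$ to split $\mathrm{Q}(\mathbb{P}(V)) = a + b$ with $a \in \orthL{TM}$ and $b \in \orthL{F}$, and note that $a = \mathbb{P}(V) - \mathrm{P}(\mathbb{P}(V)) - b$ is forced to lie in $F$; the uniqueness condition $\orthL{TM} \cap F = \set{0}$ from \eqref{neccforexisten} then kills $a$, so $\mathbb{P}(V) - \mathrm{P}(V) = \mathrm{Q}(\mathbb{P}(V)) = b \in \orthL{F}$, using $\mathrm{P}(\mathbb{P}(V)) = \mathrm{P}(V)$. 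The $\dd\eta$-orthogonality of $\orthL{F}$ to $F$ is the easy companion fact: for $v \in \orthL{F}$ one has $\flat(v) \in \ann{F}$, and because $\Reeb \in F$ the relation $\flat(v)(\Reeb) = \eta(v)$ forces $\eta(v) = 0$, so that $\flat(v) = \contr{v}\dd\eta$ vanishes on all of $F$. Assembling these pieces yields $\Lambda_{H,M} = \overline{\Lambda}_{H,M}$, and with the Reeb identity the two brackets coincide.
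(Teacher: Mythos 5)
Your proof is correct and follows essentially the same route as the paper: both arguments reduce the statement to the two tensor identities $\Reeb_{H,M}=\overline{\Reeb}_{H,M}$ and $\Lambda_{H,M}=\overline{\Lambda}_{H,M}$, transport the bivectors through $\flat$ so that the comparison becomes $\dd\eta\left(\mathbb{P}(V),\mathbb{P}(W)\right)=\dd\eta\left(\mathrm{P}(V),\mathrm{P}(W)\right)$, and then use $\Reeb\in F$, $\orthL{F}\subseteq F$ and the uniqueness condition $\orthL{TM}\cap F=\set{0}$ to place the discrepancy in $\orthL{F}$, which is $\dd\eta$-orthogonal to $F$. Your derivation of $F\cap\orthL{\left(TM\cap F\right)}=\orthL{F}$ in fact supplies the justification for a step the paper only asserts (that $\mathrm{Q}\left(\mathbb{P}(V)\right)\in\orthL{F}$), so no gap remains.
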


Notice that both the Lagrangian and the Hamiltonian descriptions satisfy the mechanical condition, the Reeb vector field is tangent to $F$, and $\orthL{F}\subseteq F$ (see the next two sections for a proof). Therefore, this equivalent version of the non-holonomic bracket may be defined in both cases, giving a \textit{completely new way} (in both cases) of calculating the bracket of function in a (Hamiltonian or Lagrangian) non-holonomic contact system.

\section{Contact Lagrangian dynamics}
A particular but relevant case of contact manifold is given by the manifold $TQ \times \mathbb{R}$ where $Q$ is an $n-$dimensional manifold, equipped with a \textit{regular Lagrangian function} $L:TQ\times \RR \to \RR$, i.e., a function on $TQ\times \RR$ satisfying that the matrix,
\begin{equation}\label{eq:velocity_Hessian}
	W_{ij}=\left( \frac{\partial^2 L}{\partial \dot{q}^i \partial \dot{q}^j} \right)
\end{equation}
is regular. So, we may construct a contact form on $TQ\times \RR$ as follows
\begin{equation}
    \eta_L = \dd z - \alpha_L,
\end{equation}
where $\alpha_L = S^* (\dd L)$, and $S$ is the \textit{canonical vertical endomorphism on} $TQ$ extended in the natural way to $TQ \times \RR$ (see~\cite{deLeon2011}). $\eta_{L}$ is called the \textit{contact Lagrangian form}. Let $\left( q^{i}, \dot{q}^i,z\right)$ the natural coordinates on $TQ\times \RR$. Then, 
\begin{align}
    \eta_L &=\dd z -  \pdv{L}{\dot q^i} \dd {q}^i,
\end{align}
Moreover, the energy of the system is defined as follows
 \begin{equation}
    E_L = \Delta(L) - L = \dot{q}^i \pdv{L}{\dot{q}^i} - L,
 \end{equation}
 where $\displaystyle{\Delta = \dot{q}^i \pdv{}{\dot{q}^i}}$ is the \textit{Liouville vector field on} $TQ$ extended in a natural way to $TQ\times \RR$. The triple $\left(TQ \times \RR, \eta_{L} ,  E_{L} \right)$ is called a \textit{contact Lagrangian system}.

The dynamics is given by the \emph{Euler--Lagrangian vector field}, which is $\Gamma_L = X_{E_{L}}$ (Eq. (\ref{eq:hamiltonian_vf_contact})) whose integral curves $\left( \xi , \dot{\xi} , s \right)$ are solution of the following equations
\begin{eqnarray}\label{52}
   && \pdv{L}{q^i} - \dv{}{t} \pdv{L}{\dot{q}^i} + \pdv{L}{\dot{q}^i} \pdv{L}{z} = 0\\
   && \dot{z} = L
\end{eqnarray}

These equations were presented by G. Herglotz in 1930 \cite{Herglotz1930} (see also \cite{Georgieva2003,Georgieva2011}), and they will be called \textit{Herglotz equations}. In a more recent approach, A. Bravetti \emph{et al.}~\cite{Bravetti2018a} and M. de Le\'on and M. Lainz ~\cite{deLeon2019} have obtained the connection between these equations and the contact Hamiltonian dynamics. Furthermore, we may derive these Herglotz equations from the \textit{Herglotz variational principle}~\cite[Thm.~2]{deLeon2019}.\\

In \cite{primero}, the non-holonomic equations for contact Lagragian systems with linear kinematic constraints were studied, by restricting the variations so that they satisfy the nonholonomic constraints in the Herglotz variational principle. We will prove that this may be understood as a particular case of our general framework of constrained dynamics.

Let us assume that we have linear kinematic constraints (that is, constraints linear on the velocities) defined by a regular distribution $\mathcal{D}$ on the configuration manifold $Q$ with codimension $k$. Then, there exists a family of local $1-$forms $\{\Phi^a \}_{a=1, \dots , k}$ on $Q$ such that
\begin{equation} \label{5}
\mathcal{D}=\left\{v\in TQ \mid \Phi^a \left( v \right)=0\right\}.
\end{equation}
Consider the Lagrangian function $L: TQ \times \mathbb{R} \rightarrow \mathbb{R}$, and define the set
\begin{equation}
    \Omega(q_1,q_2, [a,b])_{\xi}^{\mathcal{D}} = \set{
        v \in T_\xi \Omega(q_1,q_2, [a,b]) \mid v(t) \in \mathcal{D}_{\xi(t)} \text{ for all } t \in [a,b]},
\end{equation}
where, $\Omega(q_1,q_2, [a,b]) \subseteq(\Cont^\infty([a,b]\to Q)$ is the infinite dimensional manifold given by the smooth curves $\xi$ on an interval $[a,b] \subset \RR$ such that $\xi(a)=q_1$ and $\xi(b)=q_2$. Then,
\begin{equation*}
\begin{aligned}
        T_\xi \Omega(q_1,q_2, [a,b]) =  \set{&
            v_\xi \in \Cont^\infty([a,b] \to TQ) \mid \\&
            \tau_Q \circ v_\xi = \xi, \,v_\xi(a)=0, \, v_\xi(b)=0
            }.
\end{aligned}
\end{equation*}
So, $\xi$ satisfies the \textit{Herglotz variational principle with constraints} \cite{primero} if, and only if,
\begin{enumerate}
\item $T_{\xi}\mathcal{A}_{\vert \Omega(q_1,q_2, [a,b])_{\xi}^{\mathcal{D}}} = 0.$

\item $\dot{\xi} \left( t \right) \in \mathcal{D}_{\xi(t)} \text{ for all } t \in [a,b].$
\end{enumerate}

\begin{theorem}[\cite{primero}]
A path $\xi \in \Omega(q_1,q_2, [a,b])$ satisfies the Herglotz variational principle with constraints if and only if
\begin{equation}
    \begin{dcases}
\left(\pdv{L}{q^i} - \dv{}{t} \pdv{L}{\dot{q}^i} + \pdv{L}{\dot{q}^i} \pdv{L}{z}\right)dq^{i}  \in \ann{\mathcal{D}}_{\xi(t)}
\\
\dot{\xi} \in \mathcal{D}
\end{dcases}
\end{equation}
where $\ann{\mathcal{D}} = \set {a \in T^*Q \mid a(u)=0 \text{ for all } u \in \mathcal{D} }$ is the annihilator of $\mathcal{D}$.
\end{theorem}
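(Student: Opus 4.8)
The plan is to derive the constrained Herglotz equations from the two conditions of the Herglotz variational principle with constraints, treating them separately. The second condition $\dot{\xi} \in \mathcal{D}$ is immediate: it is literally the second listed requirement of the variational principle, so it transfers directly. The substance of the theorem lies in showing that the first condition, $T_{\xi}\mathcal{A}_{\vert \Omega(q_1,q_2,[a,b])_{\xi}^{\mathcal{D}}} = 0$, is equivalent to the statement that the Herglotz covector
\begin{equation*}
\left(\pdv{L}{q^i} - \dv{}{t}\pdv{L}{\dot{q}^i} + \pdv{L}{\dot{q}^i}\pdv{L}{z}\right)\dd q^{i}
\end{equation*}
lies in $\ann{\mathcal{D}}_{\xi(t)}$ for each $t$.

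First I would recall the formula for the differential of the Herglotz action functional $\mathcal{A}$ in the \emph{unconstrained} case: a first-variation computation (integrating by parts, and tracking the dependence of $z$ on the path through the Herglotz ODE $\dot z = L$, which introduces the characteristic $\pdv{L}{\dot q^i}\pdv{L}{z}$ correction term relative to the classical Euler--Lagrange expression) yields
\begin{equation*}
T_{\xi}\mathcal{A}(v) = \int_a^b \left(\pdv{L}{q^i} - \dv{}{t}\pdv{L}{\dot{q}^i} + \pdv{L}{\dot{q}^i}\pdv{L}{z}\right) v^{i}(t)\, \dd t
\end{equation*}
for a variation $v \in T_{\xi}\Omega(q_1,q_2,[a,b])$, the boundary terms vanishing because $v(a)=v(b)=0$. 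This identity is the analytic engine; I would cite it from \cite{primero} or \cite{deLeon2019} rather than rederive it, since the Herglotz variational principle and its first variation are established there.

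The key step is then the restriction to constrained variations. The condition $T_{\xi}\mathcal{A}_{\vert \Omega^{\mathcal{D}}} = 0$ means the integral above vanishes for \emph{all} $v$ with $v(t) \in \mathcal{D}_{\xi(t)}$ for every $t$. Writing the Herglotz covector as $E_i(t)\,\dd q^i$ with $E_i = \pdv{L}{q^i} - \dv{}{t}\pdv{L}{\dot{q}^i} + \pdv{L}{\dot{q}^i}\pdv{L}{z}$, the requirement is $\int_a^b E_i(t) v^i(t)\,\dd t = 0$ for every $\mathcal{D}$-valued $v$ vanishing at the endpoints. A standard fundamental-lemma-of-the-calculus-of-variations argument — localizing with bump functions supported in the interior of $[a,b]$ and valued in $\mathcal{D}_{\xi(t)}$ — forces $E_i(t) u^i = 0$ for every $u \in \mathcal{D}_{\xi(t)}$ and every $t$, which is exactly the pointwise statement $E_i(t)\,\dd q^i \in \ann{\mathcal{D}}_{\xi(t)}$. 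The converse direction is the trivial substitution. The main obstacle I anticipate is purely technical: one must check that $\mathcal{D}$-valued variations vanishing at the endpoints are genuinely realizable as tangent vectors in $T_\xi\Omega^{\mathcal{D}}$ (so that the fundamental lemma may be applied within the admissible class), and that the localized test vectors can be chosen smoothly along $\xi$ — this is where the \emph{regularity} of the distribution $\mathcal{D}$ and the local frame $\{\Phi^a\}$ from \eqref{5} are used, ensuring $\mathcal{D}_{\xi(t)}$ varies smoothly and admits smooth compactly-supported sections along the curve.
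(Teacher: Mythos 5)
Your proposal follows what is essentially the only reasonable route, and it is the route of the cited reference: the paper itself states this theorem without proof, quoting it from \cite{primero}, and the argument there is exactly the first-variation-plus-fundamental-lemma scheme you describe, with the variations restricted to be $\mathcal{D}$-valued. One correction: the first variation of the Herglotz action is not the bare integral you wrote but carries a positive integrating factor coming from the dependence of $z$ on the path, namely
\begin{equation*}
T_{\xi}\mathcal{A}(v) = \frac{1}{\sigma(b)}\int_a^b \sigma(t)\left(\pdv{L}{q^i} - \dv{}{t}\pdv{L}{\dot{q}^i} + \pdv{L}{\dot{q}^i}\pdv{L}{z}\right) v^{i}(t)\, \dd t,
\qquad
\sigma(t) = e^{-\int_a^t \pdv{L}{z}\,\dd s},
\end{equation*}
so what the fundamental lemma actually kills is $\sigma(t)E_i(t)$ paired against $\mathcal{D}_{\xi(t)}$; since $\sigma>0$ everywhere this yields the same pointwise conclusion $E_i\,\dd q^i \in \ann{\mathcal{D}}_{\xi(t)}$, and the rest of your argument (realizing $\mathcal{D}$-valued test variations via a local frame of the regular distribution) is sound.
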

In other words, there exist some Lagrange multipliers $\lambda_i(q^i)$ such that,
\begin{equation}
    \begin{dcases}\label{eq:nonholonomic_herglotz_eqs_coords}
        \dv{}{t} \pdv{L}{\dot{q}^i} - \pdv{L}{q^i} - \pdv{L}{\dot{q}^i} \pdv{L}{z} = \lambda_a\Phi^a_i \\
        \Phi^a (\dot{\xi}(t)) = 0.
    \end{dcases}
\end{equation}
From now on, Eqs. (\ref{eq:nonholonomic_herglotz_eqs_coords}) will be called the \emph{constrained Herglotz equations}.

The distribution $\mathcal{D}$ may be naturally lifted into a distribution $\mathcal{D}^{l}$ on $TQ\times \mathbb{R}$ in such a way that
\begin{equation}\label{eqsDo}
    \ann{(\mathcal{D}^{l})} = \left( \tau_{Q,\mathbb{R}}\right)^{*} \ann{\mathcal{D}}= S^{*} \left( \ann{T \left(\mathcal{D} \times \mathbb{R}\right)}\right) = <\tilde{\Phi}^{a}= \Phi^{a}_{i}{\dd q}^{i}>,
\end{equation}
where  $\tau_{Q,\mathbb{R}}: TQ \times \mathbb{R} \rightarrow Q$ is the projection given by $\tau_{Q,\mathbb{R}}\left( v_{q},z\right) = q$.
\begin{theorem}[\cite{primero}]\label{18}
Assume that $L$ is regular. Let $X$ be a vector field on $TQ \times \RR$ satisfying the equation
\begin{equation}
    \begin{dcases}\label{6}
        \flat_L\left(X\right) - \dd E_L + \left(E_L + \Reeb_L\left(E_L\right)\right)\eta_L \in \ann{\mathcal{D}^{l}} \\
        X_{\vert \mathcal{D} \times \RR} \in  \mathfrak{X}\left(\mathcal{D} \times \RR \right).
    \end{dcases},
\end{equation}
Then,
\begin{itemize}
\item[(1)] $X$ is a SODE on $TQ \times \RR$.

\item[(2)] The integral curves of $X$ are solutions of the constrained Herglotz equations (\ref{eq:nonholonomic_herglotz_eqs_coords}).
\end{itemize}

\end{theorem}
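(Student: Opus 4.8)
The plan is to work in the natural bundle coordinates $(q^i,\dot q^i,z)$ and to exploit that, by \cref{eqsDo}, the annihilator $\ann{(\mathcal{D}^{l})}$ is generated by the \emph{semibasic} $1$-forms $\tilde\Phi^a = \Phi^a_i\,\dd q^i$, i.e.\ forms with no $\dd\dot q^i$ and no $\dd z$ component. The whole argument rests on one structural observation about the contact Lagrangian form. Since $\eta_L = \dd z - \tfrac{\partial L}{\partial\dot q^i}\dd q^i$, a direct computation gives $\flat_L\bigl(v^k\tfrac{\partial}{\partial\dot q^k}\bigr) = -W_{ki}v^k\,\dd q^i$, where $W_{ij}$ is the velocity Hessian \cref{eq:velocity_Hessian}. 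As $L$ is regular, $W$ is invertible, so $\flat_L$ restricts to an isomorphism between the vertical subbundle $\gen{\partial/\partial\dot q^i} = \operatorname{Im}S$ and the semibasic $1$-forms. Consequently $\sharp_L = \flat_L^{-1}$ maps every element of $\ann{(\mathcal{D}^{l})}$ into $\operatorname{Im}S$; explicitly $\sharp_L(\tilde\Phi^a) = -W^{ki}\Phi^a_i\,\partial/\partial\dot q^k$, which is purely vertical with neither a $\partial/\partial q^i$ nor a $\partial/\partial z$ component.

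With this in hand I would proceed by comparison with the free dynamics. Let $\Gamma_L = X_{E_L}$ be the unconstrained Euler--Lagrange vector field, which satisfies $\flat_L(\Gamma_L) = \dd E_L - (E_L + \Reeb_L(E_L))\eta_L$ and is known to be a SODE whose integral curves solve the free Herglotz equations \cref{52}, in particular with $\dd z/\dd t = L$. Subtracting this identity from \cref{6} yields $\flat_L(X-\Gamma_L) = \lambda_a\tilde\Phi^a \in \ann{(\mathcal{D}^{l})}$ for suitable multipliers $\lambda_a$. By the observation above, $X - \Gamma_L = \sharp_L(\lambda_a\tilde\Phi^a)$ lies in $\operatorname{Im}S$, so $X$ and $\Gamma_L$ have identical $\partial/\partial q^i$ components; since $\Gamma_L$ is a SODE this forces the $\partial/\partial q^i$ component of $X$ to equal $\dot q^i$, which is assertion \textbf{(1)}. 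The same verticality shows the $\partial/\partial z$ components agree, so $X$ also satisfies $\dd z/\dd t = L$.

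For \textbf{(2)}, the only discrepancy between $X$ and $\Gamma_L$ is the vertical correction $\sharp_L(\lambda_a\tilde\Phi^a) = -\lambda_a W^{ki}\Phi^a_i\,\partial/\partial\dot q^k$. Reading off the $\partial/\partial\dot q^k$ components along an integral curve $(\xi,\dot\xi,s)$, the acceleration acquires the extra term $-\lambda_a W^{ki}\Phi^a_i$; contracting with $W_{jk}$ inside $\tfrac{\dd}{\dd t}\tfrac{\partial L}{\partial\dot q^j}$ and using $W_{jk}W^{ki}=\delta^i_j$, this adds precisely a term $\lambda_a\Phi^a_j$ to the right-hand side of the free equation $\tfrac{\dd}{\dd t}\tfrac{\partial L}{\partial\dot q^j} - \tfrac{\partial L}{\partial q^j} - \tfrac{\partial L}{\partial\dot q^j}\tfrac{\partial L}{\partial z} = 0$ (after renaming the multipliers to absorb the sign convention), producing the first line of \cref{eq:nonholonomic_herglotz_eqs_coords}. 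Finally, the tangency condition $X_{\vert\mathcal{D}\times\RR}\in\mathfrak{X}(\mathcal{D}\times\RR)$ guarantees that integral curves starting on the constraint submanifold remain there, whence $\Phi^a(\dot\xi(t)) = \Phi^a_i\dot\xi^i = 0$, which is the second line of \cref{eq:nonholonomic_herglotz_eqs_coords}.

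The main obstacle---indeed the only nontrivial point---is the structural lemma of the first paragraph: that $\sharp_L$ carries the semibasic annihilator $\ann{(\mathcal{D}^{l})}$ into the vertical bundle $\operatorname{Im}S$. This is where regularity of $L$ (invertibility of $W$) and the specific form $\eta_L = \dd z - S^*(\dd L)$ enter essentially; once it is established, both the SODE property and the identification of the reaction forces with the Lagrange-multiplier terms follow by routine bookkeeping against the already-known unconstrained Herglotz theory.
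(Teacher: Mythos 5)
Your argument is correct, and it takes what is essentially the intended route: the structural lemma that $\sharp_L$ carries the semibasic annihilator $\ann{(\mathcal{D}^{l})}$ into the vertical bundle $\operatorname{Im}S$ (via invertibility of $W_{ij}$), followed by comparison with the unconstrained Euler--Lagrange vector field $\Gamma_L$, is exactly the mechanism of the proof in \cite{primero}, which the paper cites for this theorem without reproducing the argument. Nothing further is needed.
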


\begin{remark}
{\rm 
Recall that a \textit{SODE} is a vector field $X$ on $TQ \times \mathbb{R}$ such that,
\begin{equation}\label{55}
S \left( X \right) = \Delta
\end{equation}
Let $\left( q^{i} \right) $ be a local chart on $Q$. Then, Eq. (\ref{55}) reduces to
\[ X \left( q^{i} \right) = \dot{q}^{i}, \ \forall i.\]
Thus, $X$ is a SODE if and only if any integral curve of $X$ is written (locally) as $\left( \xi , \dot{\xi} , s \right)$ for some (local) curve $\xi$ on $Q$.
Observe that, it is immediate to prove that the vector field $X$ defined above is a SODE.}
\end{remark}

By considering $M = \mathcal{D}\times \mathbb{R}$ and $F = \mathcal{D}^{l}$, we have that \cref{6} reduces to \cref{6.4} and the quintuple $\left(TQ \times \mathbb{R}, \eta_L, E_{L},  \mathcal{D}\times \mathbb{R} , \mathcal{D}^{l} \right)$ is a constrained Hamiltonian system satisfying the \textit{mechanical condition} (Definition \ref{mechanicalcondit2343}).

Indeed, the local basis of $\ann{F}$ is given by the $1-$forms $\tilde{\Phi}^{a}= \Phi^{a}_{i}{\dd q}^{i}$, and it is trivial to check the mechanical condition and $\orthL{F}\subseteq F$. Furthermore, $\Reeb_{L}\left(\tilde{\Phi}^{a} \right) = 0$ for all $a$, i.e., $F$ is vertical.

Furthermore, at it is proved in \cite{primero}, the condition of existence in Corollary \ref{equaldimensions34} is always satisfied, and the condition of uniqueness of Proposition \ref{GeneralizedYoootraproposiciondelcarajomas} is satisfied, if and only if, the matrix,
\begin{equation}\label{lamatrizcdecarajo}
\left(\mathcal{C}_{ab}\right) =  \left(W^{ik}\Phi^{b}_{k}\Phi^{a}_{i} \right),
\end{equation}
where $\left( W^{ik} \right)$ is the inverse of the Hessian matrix $\left( W_{ik}\right) = \left(\dfrac{\partial^{2}L}{\partial\dot{q}^i \partial \dot{q}^{k}}\right)$, is regular.

It is a simple exercise to check that the \textit{non-holonomic bracket} for the constrained Hamiltonian system $\left(TQ \times \mathbb{R}, \eta_L, E_{L},  \mathcal{D}\times \mathbb{R} , \mathcal{D}^{l} \right)$ 
given in \cref{gennonholonbracketdef56}, coincides with the one given in \cite{primero}.

\section{Constrained dymanics of contact Hamiltonian systems}
In this section, we will use the general framework of constrained dynamics developed in Section \ref{genframework24}, to obtain the Hamiltonian counterpart of non-holonomic systems.

Here, we will consider a \textit{Hamiltonian function} as a differentiable function $H : T^{*}Q\times \mathbb{R} \rightarrow \mathbb{R}$. We may define the canonical contact form of $T^{*}Q\times \mathbb{R}$ given by
\begin{equation}
    \eta_{Q} = dz - \theta_{Q},
\end{equation}
where $\theta_{Q}$ is the \textit{canonical Liouville form on} $T^{*}Q\times \mathbb{R}$. Namely,
$$\set{\theta_{Q} \left( \alpha_{q} , z\right)}\left( X_{\alpha_{q}}, \lambda_{z}\right) = \alpha_{q}\left(T_{\alpha_{q}}\pi_{Q,\mathbb{R}}\left( X_{\alpha_{q}}\right) \right)$$
for all $\left( \alpha_{q} , z\right)\in T^{*}Q\times \mathbb{R}$, and $\left( X_{\alpha_{q}}, \lambda_{z}\right) \in T_{\left( \alpha_{q} , z\right)}\left(T^{*}Q\times \mathbb{R} \right) $, where $\pi_{Q,\mathbb{R}}: T^{*}Q \times \mathbb{R} \rightarrow Q$ is the projection defined by $\pi_{Q,\mathbb{R}}\left(\alpha_{q}, z\right) = q$.

Thus, considering the canonical coordinates $\left( q^{i}, p_{i} , z \right)$, we have that
\begin{equation}
    \eta_{Q} = dz - p_{i} \dd q^{i},
\end{equation}
Then, the Hamiltonian vector associated to $H$ is locally described as follows,
\begin{equation}\label{Hamiltneq2}
    X_{H} =\pdv{H}{p_i}\pdv{}{q^i}  -\left(\pdv{H}{q^i}  +p_{i} \pdv{H}{z}\right) \pdv{}{p_{i}}    + \left( p_{i} \pdv{H}{p_{i}}-H\right)\pdv{}{z}
\end{equation}
Therefore, its integral curves satisfy the \textit{contact Hamiltonian equations}
\begin{equation}\label{Hamiltoneqs34}
\begin{dcases}
 \frac{\dd q^{i}}{\dd t} &=    \pdv{H}{p^{i}}\\
 \frac{\dd p_{i}}{\dd t} &=  - \pdv{H}{q^{i}}  - p_{i} \pdv{H}{z}\\
 \frac{\dd z}{\dd t} &=   p_{i} \pdv{H}{p_{i}}-H
 \end{dcases}
\end{equation}

From a physical intuitive view, we will derive the Hamiltonian constrained equations from the Lagrangian constrained equations via the Legendre transformation. The \textit{Legendre transformation}, is defined as follows,
$$
\begin{array}{rccl}
FL : & TQ \times \mathbb{R} & \rightarrow & T^{*}Q \times \mathbb{R}\\
&\left( v_{q} , z \right) &\mapsto & \dd \left(L_{T_{q}Q \times \set{z} }\right)_{\vert \left( v_{q} , z \right)}\\
\end{array}
$$
i.e., locally
$$FL \left( q^{i}, \dot{q}^{i},z\right) = \left( q^{i}, \pdv{L}{\dot{q}^i},z\right)$$
Along this paper, we will assume that $FL$ is a global diffeomorphism, namely, $L$ is \textit{hyperregular}. Notice that, in this case, the \textit{Hamiltonian energy} is defined by
\begin{equation}\label{Hamiltonianfunction}
    H = E_{L}\circ \left( FL \right)^{-1}
\end{equation}
Then,
\begin{equation}\label{eqsmuchas3434}
    \begin{dcases}
FL^{*}\eta_{Q} &= \eta_{L}\\
FL_{*} \Reeb_{L} &= \Reeb_{Q}\\
FL_{*}\Gamma_{L} &= X_{H}\\
T^{*}FL\circ {\flat}_{Q} \circ T FL &= {\flat}_{L}
\end{dcases}
\end{equation}

Hence, $FL$ transforms the Herglotz equations (\ref{52}) into the contact Hamiltonian equations (\ref{Hamiltoneqs34}).

Let us now assume the existence of linear kinematic constraints (on the velocities) defined by a regular distribution $\mathcal{D}$ on the configuration manifold $Q$ with codimension $k$ and a family of local $1-$forms $\{\Phi^a \}_{a=1, \dots , k}$ on $Q$ generating $\ann{\mathcal{D}}$ (see Eq. (\ref{5})). Let us denote by $M$ the image of the constraint manifold $\mathcal{D}\times \mathbb{R}$ by the Legendre transformation $FL$, say $M = FL(\mathcal{D} \times \mathbb{R})$.

Assume that there exists a solution $\Gamma_{L, \mathcal{D}}$ of the Eq. (\ref{6}). Then, we may define a vector field $X_{H,M} = FL_{*}(\Gamma_{L,\mathcal{D}}) $ on $T^{*}Q\times \mathbb{R}$. So, by taking into account Eqs. (\ref{eqsmuchas3434}), $X_{H,M}$ satisfies,
\begin{equation}\label{aux-43}
 \begin{dcases}
        FL^{*}\left[\flat_Q\left(X_{H, M}\right) - \dd H + \left(H + \Reeb_Q\left(H\right)\right)\eta_Q\right] \in \ann{\mathcal{D}^{l}} \\
        \left(X_{H,M}\right)_{\vert M} \in  \mathfrak{X}\left(M \right).
    \end{dcases},
\end{equation}
Notice that, by Eq. (\ref{eqsDo}), we have
\begin{eqnarray}
{FL^{-1}}^{*} \left( \ann{\mathcal{D}^{l}} \right) &=& {FL^{-1}}^{*} \left( \left( \tau_{Q,\mathbb{R}}\right)^{*} \ann{\mathcal{D}} \right)\\
 &=& \left( \tau_{Q,\mathbb{R}} \circ FL^{-1}\right)^{*} \ann{\mathcal{D}}\\
 &=&  \left( \pi_{Q,\mathbb{R}}\right)^{*} \ann{\mathcal{D}}
\end{eqnarray}
We will denote by $\mathfrak{D}^{l}$ the vector subbundle of $T^{*}Q \times \mathbb{R}$ such that $\left( \pi_{Q,\mathbb{R}}\right)^{*} \ann{\mathcal{D}} = \ann{\mathfrak{D}^{l}}$. Then, the annihilator $\ann{\mathfrak{D}^{l}}$ is also generated by the $1-$forms $\Phi^a_{i}dq^{i}$ on $T^{*}Q \times \mathbb{R}$ (see Eq. (\ref{5})) and, we may rewrite  Eq. (\ref{aux-43}) as follows
\begin{equation}
 \begin{dcases}\label{6.2}
        \flat_Q\left(X_{H, M}\right) - \dd H + \left(H + \Reeb_Q\left(H\right)\right)\eta_Q \in \ann{\mathfrak{D}^{l}} \\
        X_{\vert M} \in  \mathfrak{X}\left(M \right).
    \end{dcases},
\end{equation}

So, for the Hamiltonian counterpart we have the constrained Hamiltonian system given by $\left( T^{*}Q\times \RR , \eta_Q, H, FL\left( \mathcal{D}\times \RR\right) , \mathfrak{D}^{l}\right)$ (see Definition \ref{definitionconstraind23}). Therefore, locally, the constrained Hamiltonian vector field is given by,
\begin{equation}\label{nonholHamiltneq}
    X_{H,M} =\pdv{H}{p_i}\pdv{}{q^i}  -\left(\pdv{H}{q^Pi}  +p_{i} \pdv{H}{z}+ \lambda_a\Phi^a_i\right) \pdv{}{p_{i}}    + \left( p_{i} \pdv{H}{p_{i}}-H\right)\pdv{}{z}
\end{equation}

and its integral curves $\xi$ satisfy the following equations,
\begin{align}
    \begin{cases}\label{eq:Hamiltonnonholonomic_herglotz_eqs_coords}
         \frac{\dd q^{i}}{\dd t} &=   \pdv{H}{p^{i}}\\
 \frac{\dd p_{i}}{\dd t} &=  - \pdv{H}{q^{i}}  -p_{i} \pdv{H}{z} - \lambda_a\Phi^a_i\\
 \frac{\dd z}{\dd t} &=   p_{i} \pdv{H}{p_{i}}-H\\
        \varphi^{a}\left(\xi\right) & = 0, \ \forall a
    \end{cases}
\end{align}
where $\varphi^{a}$ are the constraint functions defined by $\Phi_a = \phi_a \circ FL$. From now on, Eqs. (\ref{eq:Hamiltonnonholonomic_herglotz_eqs_coords}) will be called the \emph{constrained Hamiltonian Herglotz equations}. Thus, we have proved the following result.

\begin{theorem}\label{18.2}
Let be a Hamiltonian function $H: T^{*}Q\times \mathbb{R} \rightarrow \mathbb{R}$, and a constraint manifold $M \subseteq T^{*}Q \times \mathbb{R}$. Let $X$ be a vector field on $T^{*}Q \times \RR$ satisfying the equation
\begin{equation*}
 \begin{dcases}
        \flat_Q\left(X\right) - \dd H + \left(H + \Reeb_Q\left(H\right)\right)\eta_Q \in \ann{\mathfrak{D}^{l}} \\
        X_{\vert M} \in  \mathfrak{X}\left(M \right)
    \end{dcases}
\end{equation*}
Then, the integral curves of $X$ are solutions of the constrained Hamiltonian Herglotz equations (\ref{eq:Hamiltonnonholonomic_herglotz_eqs_coords}). A solution for this equation will be denoted by $X_{H,M}$, and will be called \textit{constrained Hamiltonian vector field}.

\end{theorem}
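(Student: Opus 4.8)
The plan is to recognize the displayed equation in the statement as a specialization of the general constrained equation (\ref{6.4}) and then simply invoke \cref{18.4}. First I would make the identifications $P = T^{*}Q\times\RR$, $\eta = \eta_Q$, $\flat = \flat_Q$, $\Reeb = \Reeb_Q$, and take as reaction-force distribution $F = \mathfrak{D}^{l}$. With these substitutions the system in the statement is literally \cref{6.4}, so \cref{18.4} applies verbatim and tells us that the integral curves of any solution $X$ satisfy the general constrained Herglotz equations (\ref{generalizedeq:Hamiltonnonholonomic_herglotz_eqs_coordsMechatype}). Nothing new has to be proved at this level; the content is entirely in reducing those general equations to the specific form (\ref{eq:Hamiltonnonholonomic_herglotz_eqs_coords}).

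The second step is to specialize the coefficients entering (\ref{generalizedeq:Hamiltonnonholonomic_herglotz_eqs_coordsMechatype}) to the present situation, and for this the key input is the explicit shape of $\ann{\mathfrak{D}^{l}}$. As established just above the statement via the pullback identity $(\pi_{Q,\RR})^{*}\ann{\mathcal{D}} = \ann{\mathfrak{D}^{l}}$, this annihilator is generated by the $1$-forms $\Phi^{a}_{i}\,\dd q^{i}$ on $T^{*}Q\times\RR$. Comparing with the generic local basis $\Psi^{a} = \Phi^{a}_{i}\dd q^{i} + \Psi^{a}_{i}\dd p_{i} + \mu^{a}\dd z$ used in \cref{18.4}, the absence of $\dd p_{i}$- and $\dd z$-components forces $\Psi^{a}_{i} = 0$ and $\mu^{a} = 0$ for all $a,i$, while the surviving $\dd q^{i}$-coefficient is exactly the $\Phi^{a}_{i}$ coming from the defining forms of $\mathcal{D}$.

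Finally, I would substitute $\Psi^{a}_{i}=0$ and $\mu^{a}=0$ into (\ref{generalizedeq:Hamiltonnonholonomic_herglotz_eqs_coordsMechatype}): the term $\lambda_a\Psi^{a}_{i}$ disappears from the $\dd q^{i}/\dd t$ equation, the term $\lambda_{a}(\mu^{a}p_{i}+\Phi^{a}_{i})$ collapses to $\lambda_a\Phi^{a}_{i}$ in the $\dd p_{i}/\dd t$ equation, the whole multiplier contribution $\lambda_{a}(\mu^{a}+p_{i}\Psi^{a}_{i})$ vanishes from the $\dd z/\dd t$ equation, and the constraint line $\varphi^{a}(\xi)=0$ is untouched. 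What remains is exactly (\ref{eq:Hamiltonnonholonomic_herglotz_eqs_coords}). The argument is thus essentially bookkeeping, and the only place where any genuine care is needed—and where I would expect a subtlety to hide—is the identification of the generators of $\ann{\mathfrak{D}^{l}}$, together with the purely notational clash in the symbol $\Phi^{a}_{i}$, which labels both the generic $\dd q^{i}$-coefficient in \cref{18.4} and the components of the forms defining $\mathcal{D}$; that these coincide is precisely what the pullback identity $(\pi_{Q,\RR})^{*}\ann{\mathcal{D}} = \ann{\mathfrak{D}^{l}}$ guarantees.
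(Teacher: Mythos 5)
Your proposal is correct and takes essentially the same route as the paper: the paper also treats the displayed system as the instance of \cref{6.4} with $P = T^{*}Q\times\RR$, $F = \mathfrak{D}^{l}$ and $\ann{\mathfrak{D}^{l}} = \gen{\Phi^{a}_{i}\,\dd q^{i}}$, so that the general constrained Herglotz equations of \cref{18.4} collapse (via $\Psi^{a}_{i}=0$, $\mu^{a}=0$) to (\ref{eq:Hamiltonnonholonomic_herglotz_eqs_coords}). The only difference is presentational --- the paper reaches this specialization by pushing the Lagrangian constrained equation forward through the Legendre transformation before reading off the local expressions, whereas you invoke \cref{18.4} directly --- but the bookkeeping is identical.
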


Notice that, in general, $FL$ preserves the fibres of $\tau_{Q,\mathbb{R}}$ and $\pi_{Q,\mathbb{R}}$, but it is not linear on these fibres. Thus, we may not assume that $M$ is ``\textit{linear}'', i.e., a subbundle of $\pi_{Q,\mathbb{R}}$. Indeed, the momentum coordinates of the Legendre transformation $FL$ depend on the real variable ``$z$'', so we may not even assume that $M = \mathcal{M} \times \mathbb{R}$, for some subset $\mathcal{M}$ of $T^{*}Q$.

Nevertheless, there is a relevant family of Lagrangian functions which transform the space $\mathcal{D}\times \mathbb{R}$, with $\mathcal{D}$ a regular distribution of $Q$, into the space $\mathcal{M}\times \mathbb{R}$, with $\mathcal{M}$ a subbundle of $T^{*}Q$; these are those of \textit{mechanical type}. A lagrangian $L$ is of \textit{mechanical type} if there exists a Riemannian metric $g$ on $Q$, such that,
$$L \left(v_{q},z\right) \ = \ \frac{1}{2}g\left( v_{q},v_{q}\right) + V\left( q,z\right), \ \forall \left(v_{q},z\right) \in T_{q}Q\times \mathbb{R}.$$
Here, $T \left(v_{q}\right)= \frac{1}{2}g\left( v_{q},v_{q}\right)$ represent the \textit{kinetic energy}, and $V$ is the \textit{potential energy}. Then,
$$FL \left( v_{q},z\right) = \left(\flat_{g}\left( v_{q}\right),z\right),$$
where $\flat_{g}$ is the \textit{musical isomorphism} associated to $g$. Thus, in this case, we have
$$FL \left( \mathcal{D}\times \mathbb{R}\right) = \flat_{g}\left( \mathcal{D}\right) \times \mathbb{R},$$
and $\mathcal{M} = \flat_{g}\left( \mathcal{D}\right) = \ann{\mathcal{D}^{{\perp}_{g}}}$ is a vector subbundle of $T^{*}Q$ isomorphic to $\mathcal{D}$, where $\cdot^{\perp_{g}}$ defines the orthogonal complement respect to the Riemannian metric $g$. We will denote the inverse of $\flat_{g}^{-1}$ by $\sharp_{g}$.

\begin{remark}{\rm
For an arbitrary vector subbundle $\mathcal{M}$ of $T^{*}Q$, we may define the annihilator
$$ \ann{\mathcal{M}} := \set{v\in TQ  \mid  \alpha \left( v \right) = 0, \ \forall \alpha \in \mathcal{M}}$$
Then, $\ann{\mathcal{M}}$ is generated by local vector fields $Z_{a}$. So, in the Hamiltonian side of the picture, the local vector fields $Z_{a}$ will fulfill the role of the \textit{constraints}. Indeed,
$$\flat_{g}\left( \ann{\mathcal{M}}\right) \ = \ \ann{\left(\sharp_{g}\left(\mathcal{M}\right)\right)}$$
which means that the $1-$forms,
$$ \flat_{g}\left( Z_{a} \right) \ = \ \Phi^{a},$$
generates $\ann{\mathcal{D}}= \ann{\left(\sharp_{g}\left(\mathcal{M}\right)\right)}$. Notice that any vector field $X$ on $Q$ may be equivalently depicted as a function $X:T^{*}Q \rightarrow \mathbb{R}$ which is linear on the fibres of $T^{*}Q$. So, the \textit{linear constraints} of the Hamiltonian system will be given by the vector fields $Z_{a}$, as maps from $T^{*}Q$ to $\mathbb{R}$.
}
\end{remark}

Therefore, we may rewrite the constrained Hamiltonian equations (\ref{eq:Hamiltonnonholonomic_herglotz_eqs_coords}) in terms of a subbundle $\mathcal{M}$ of $T^{*}Q$ of codimension $k$. Notice that, using \cref{Hamiltonianfunction}, we have that our Hamiltonian function is given by 
\begin{equation}
H \left( \alpha_{q} , z  \right)= T \left( \sharp_{g}\left( \alpha_{q}\right) , \sharp_{g}\left( \alpha_{q}\right)\right) - V\left( q , z\right)  = \frac{1}{2}\left( \alpha_{q}\left( \sharp_{g}\left( \alpha_{q}\right) \right)\right) - V\left( q , z\right)    
\end{equation}
Locally,

\begin{equation}
    H \left( q^{i} , p_{i} , z  \right) = \frac{1}{2} g^{i,j}p_{i}p_{j} - V\left( q^{i} , z\right) ,
\end{equation}
where $g^{i,j}$ are the coefficients of the inverse matrix of $\left( g_{i,j} = g\left( \frac{\partial}{\partial q^{i}} , \frac{\partial}{\partial q^{j}}\right)\right)$. In fact, we may prove the following result for the case in which the constraint are given by a subbundle $\mathcal{M}$ of $T^{*}Q$.

\begin{theorem}\label{18.3}
Let be a Hamiltonian function $H: T^{*}Q\times \mathbb{R} \rightarrow \mathbb{R}$, and a constraint manifold $\mathcal{M} \subseteq T^{*}Q $ which is a subbundle of $T^{*}Q$. Let $X$ be a vector field on $T^{*}Q \times \RR$ satisfying the equations
\begin{equation}\label{6.3}
 \begin{dcases}
        \flat_Q\left(X\right) - \dd H + \left(H + \Reeb_Q\left(H\right)\right)\eta_Q \in \ann{\mathfrak{D}^{l}} \\
        X_{\vert \mathcal{M} \times \mathbb{R}} \in  \mathfrak{X}\left(\mathcal{M} \times \mathbb{R} \right).
    \end{dcases}
\end{equation}
where
$$ \ann{\mathfrak{D}^{l}} = \left( \pi_{Q,\mathbb{R}}\right)^{*} \flat_{g}\left( \ann{\mathcal{M}}\right)$$
Then, the integral curves of $X$ are solutions of the following constrained Hamiltonian Herglotz equations,
\begin{align}
    \begin{cases}\label{eq:Hamiltonnonholonomic_herglotz_eqs_coordsMechatype}
         \frac{\dd q^{i}}{\dd t} &=   \pdv{H}{p^{i}}\\
 \frac{\dd p_{i}}{\dd t} &=  - \pdv{H}{q^{i}}  -p_{i} \pdv{H}{z} - \lambda_a\Phi^a_i\\
 \frac{\dd z}{\dd t} &=   p_{i} \pdv{H}{p_{i}}-H\\
        \varphi^{a}\left(\flat_{g}\left(\dot{\xi}\right)\right) & = 0, \ \forall a
    \end{cases}
\end{align}

\end{theorem}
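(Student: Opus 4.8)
The plan is to regard \cref{6.3} as the mechanical-type specialization of \cref{6.2}, so that the three dynamical equations are inherited directly from \cref{18.2} and only the last line---the reformulation of the constraint through $\flat_{g}$---requires a genuine argument.

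First I would check that the hypotheses of \cref{18.2} are met here. With $M=\mathcal{M}\times\mathbb{R}$ and $\mathcal{M}=\flat_{g}(\mathcal{D})$, the preceding remark gives $\flat_{g}(\ann{\mathcal{M}})=\ann{(\sharp_{g}(\mathcal{M}))}=\ann{\mathcal{D}}$, so the subbundle $\mathfrak{D}^{l}$ appearing in \cref{6.3} coincides with the one in \cref{6.2} and is generated locally by the $1$-forms $\Phi^{a}_{i}\,\dd q^{i}$. Consequently the integral curves of $X$ satisfy the first three lines of \cref{eq:Hamiltonnonholonomic_herglotz_eqs_coords}, which are literally the first three lines of \cref{eq:Hamiltonnonholonomic_herglotz_eqs_coordsMechatype}. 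In Darboux coordinates these follow from writing $\flat_{Q}(X)-\dd H+(H+\Reeb_{Q}(H))\eta_{Q}=\lambda_{a}\Phi^{a}_{i}\,\dd q^{i}$ and matching the $\dd q^{i}$, $\dd p_{i}$ and $\dd z$ components, exactly as in the general computation producing \cref{nonholHamiltneq}; the only simplification is that the reaction forces now possess a $\dd q^{i}$ component alone.

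The heart of the matter is the last line. Here I would exploit that $H$ is of mechanical type, so $H=\tfrac{1}{2}g^{ij}p_{i}p_{j}-V(q,z)$ and hence $\partial H/\partial p_{i}=g^{ij}p_{j}$. The first dynamical equation then reads $\dot q^{i}=g^{ij}p_{j}$, i.e. $\dot\xi=\sharp_{g}(p)$ along the curve, which is equivalent to $\flat_{g}(\dot\xi)=p$. On the other hand, the tangency requirement $X_{\vert\mathcal{M}\times\mathbb{R}}\in\mathfrak{X}(\mathcal{M}\times\mathbb{R})$ keeps the integral curve inside $\mathcal{M}\times\mathbb{R}$, so $(q(t),p(t))\in\mathcal{M}$ for all $t$. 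Describing the constraint functions $\varphi^{a}$ that cut out $\mathcal{M}$ as the fibrewise-linear maps $\alpha_{q}\mapsto\alpha_{q}(Z_{a})$, with $\flat_{g}(Z_{a})=\Phi^{a}$ as in the remark, membership in $\mathcal{M}$ is exactly $\varphi^{a}(p)=0$. Substituting $p=\flat_{g}(\dot\xi)$ yields $\varphi^{a}(\flat_{g}(\dot\xi))=0$, the remaining equation; moreover $\varphi^{a}(\flat_{g}(\dot\xi))=\Phi^{a}(\dot\xi)$, so this is precisely the original kinematic constraint $\dot\xi\in\mathcal{D}$.

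The main obstacle is the careful bookkeeping in this final step: one must verify that the fibrewise identity $\flat_{g}(\dot\xi)=p$ genuinely holds---it relies on both the mechanical form of $H$ and the first Hamilton equation---and that the functions $\varphi^{a}$ on $T^{*}Q$, defined through $\flat_{g}(Z_{a})=\Phi^{a}$ and annihilating $\mathcal{M}=\flat_{g}(\mathcal{D})$, are the correct objects, so that $\varphi^{a}(p)=0$ is equivalent to $p\in\mathcal{M}$. Once these identifications are in place, everything else is an immediate transcription of the already-established \cref{18.2}.
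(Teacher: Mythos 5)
Your proposal is correct and takes essentially the same route as the paper, which presents this theorem as the mechanical-type specialization of the preceding general Hamiltonian result rather than giving it a separate argument: the first three equations are inherited verbatim, and the only new content is the identification $\flat_{g}(\dot\xi)=p$ coming from $\partial H/\partial p_{i}=g^{ij}p_{j}$ together with the reading of the fibrewise-linear functions $Z_{a}$ (with $\flat_{g}(Z_{a})=\Phi^{a}$) as the constraints cutting out $\mathcal{M}$. Your explicit bookkeeping of why $\varphi^{a}(\flat_{g}(\dot\xi))=0$ is equivalent to tangency to $\mathcal{M}\times\mathbb{R}$ is in fact more detailed than what the paper records.
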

In this case, the constrained Hamiltonian vector field will be denoted by $X_{H, \mathcal{M}}$. Notice that

\begin{equation}\label{esta}
    \ann{\mathfrak{D}^{l}} = \left( \pi_{Q,\mathbb{R}}\right)^{*} \flat_{g}\left( \ann{\mathcal{M}}\right) = \left( \pi_{Q,\mathbb{R}}\right)^{*} \ann{\left(\sharp_{g}\left(\mathcal{M}\right)\right)} =  \left( \pi_{Q,\mathbb{R}}\right)^{*} \flat_{g}\left(\sharp_{g}\left(\mathcal{M}\right)^{\perp_{g}}\right)
\end{equation}

Therefore, Eq. (\ref{eq:Hamiltonnonholonomic_herglotz_eqs_coords}) (resp. Eq. (\ref{eq:Hamiltonnonholonomic_herglotz_eqs_coordsMechatype})) provides the correct nonholonomic dynamics in the Hamiltonian side. From now on, we will restrict to the case in which the constraint are codified into a vector subbundle $\mathcal{M}$ of $T^{*}Q$ (see Theorem \ref{18.3}). In other words, we will restrict to the constrained Hamiltonian systems given by the quadruples $\left( T^{*}Q \times \mathbb{R}, \eta_Q, H, \mathcal{M}\times \mathbb{R} , \mathfrak{D}^{l}\right)$, where $\mathcal{M}$ is a subbundle of the cotangent bundle $T^{*}Q$.\\

Let us now investigate the existence and uniqueness of solutions via Proposition \ref{GeneralizedYoootraproposiciondelcarajomas} and Corollary \ref{equaldimensions34}.

From the coordinate expression of the constraints $\tilde{\Phi}^a= \Phi^a_{i}\dd q^{i}$ defining $\mathfrak{D}^l$, one can see that $\Reeb_{Q}(\tilde{\Phi}^a)=0$, hence $\mathfrak{D}^l$ is vertical in the sense of~\cref{def:subspace_position}. Thus, it is satisfies condition $i)$ of Definition \ref{mechanicalcondit2343}.

Let $\set{Z_{a}}_{a=1 \dots k}$ be a local basis of $\ann{\mathcal{M}}$, and
$$ \flat_{g}\left( Z_{a}^{j} \frac{\partial}{\partial q^{j}}\right) \ = \ \left[Z_{a}^{j}g_{j,i}\right] \dd q^{i} ,$$
where $g_{i,j} = g \left(  \frac{\partial}{\partial q^{i}} , \frac{\partial}{\partial q^{j}}\right)$ are the local components of $g$. Then, $\tilde{\Phi}^{a} = \left[Z_{a}^{j}g_{j,i}\right]\dd q^{i}$ is a local basis of $\ann{\mathfrak{D}^{l}}$. Therefore, 
\begin{equation}\label{eq:324ff}
X_{a} = \sharp_{Q}\left( \tilde{\Phi}^{a}\right) = - \left[Z_{a}^{j}g_{j,i}\right] \frac{\partial}{\partial p_{i}}
\end{equation}
is a (local) basis of sections of the subbundle $\orthL{\mathfrak{D}^{l}}$. Observe that, it is trivial to prove that
\begin{equation}\label{33}
\orthL{\mathfrak{D}^{l}} \subseteq \mathfrak{D}^{l}
\end{equation} 

\begin{theorem}
The Hamiltonian Herglotz equations (\ref{6.3}) have the property of existence and uniqueness solutions.
\begin{proof}
Notice that $T \left( \mathcal{M} \times \RR \right)$ may be considered as a subbundle of $TQ \times \RR$ along the submanifold $ \mathcal{M} \times \RR $. Then, it is easy to show that the annihilator of the distribution $T \left( \mathcal{M} \times \RR \right)$ is given by $\pr_{\mathcal{M} \times \mathbb{R}}^{*} \ann{\left(T \mathcal{M} \right)}$ where $\pr_{\mathcal{M} \times \mathbb{R}}: \mathcal{M} \times \mathbb{R} \rightarrow \mathcal{M}$ denotes the projection on the first component. Then, let $Y =Y^{b}X_{b} $ be a vector field on $\mathcal{M} \times \RR$ tangent to $\orthL{\mathfrak{D}^{l}}$. Hence, we have that
\[ Y^{b} T Z_{a}\left( X_{b} \right) = 0.\]
Equivalently,
\[ Y^{b}Z_{b}^{j}g_{j,i} Z_{a}^{i} =  0, \ \forall a.\]
Then, $Y^{b}=0$ for all $b$.\\

On the other hand, for each $\left( \alpha_{q} , z \right) \in \mathcal{M} \times \RR$ we have that
\begin{itemize}
\item $\dim \left( \orthL{\mathfrak{D}^{l}}_{ \vert \left( \alpha_{q} , z \right)} \right) = k$

\item $\dim \left(T_{ \left( \alpha_{q} , z \right)} \left( \mathcal{M} \times \RR \right)\right) = 2n +1 -k $
\end{itemize}
So, uniqueness implies existence. In fact,
\begin{equation}\label{41}
\orthL{\mathfrak{D}^{l}} \oplus T \left( \mathcal{M} \times \RR \right) = T_{\mathcal{M} \times \RR}\left( T^{*}Q \times \RR \right),
\end{equation}
where $T_{\mathcal{M} \times \RR}\left( T^{*}Q \times \RR \right)$ consists of the tangent vectors to $T^{*}Q \times \RR $ at points of $\mathcal{M} \times \RR $.

\end{proof}
\end{theorem}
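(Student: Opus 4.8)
The plan is to verify, for the constrained Hamiltonian system $\left( T^{*}Q \times \mathbb{R}, \eta_Q, H, \mathcal{M}\times \mathbb{R} , \mathfrak{D}^{l}\right)$, the two abstract criteria established earlier. First I would check the uniqueness condition of Proposition \ref{GeneralizedYoootraproposiciondelcarajomas}, namely $\orthL{\mathfrak{D}^{l}} \cap T\left(\mathcal{M}\times\mathbb{R}\right) = \{0\}$, and then deduce existence from Corollary \ref{equaldimensions34} once the ranks are seen to match. For this I would record the explicit local generators: by \eqref{eq:324ff}, $\orthL{\mathfrak{D}^{l}}$ is spanned by the purely vertical vector fields $X_{a} = -\left[Z_{a}^{j}g_{j,i}\right]\frac{\partial}{\partial p_{i}}$, where $\{Z_{a}\}_{a=1,\dots,k}$ is a local basis of $\ann{\mathcal{M}}=\sharp_{g}\left(\mathcal{M}\right)^{\perp_{g}}$, so that a generic section of $\orthL{\mathfrak{D}^{l}}$ is $Y=Y^{b}X_{b}$.

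To impose tangency of $Y$ to $M=\mathcal{M}\times\mathbb{R}$ cleanly, I would view each $Z_{a}$ as a fibrewise-linear function on $T^{*}Q$, with coordinate expression $Z_{a}^{i}p_{i}$; then $\mathcal{M}$ is cut out by $\{Z_{a}=0\}$ and a vector is tangent to $M$ exactly when it annihilates every $Z_{a}$. Evaluating $Y(Z_{a})$ on the vertical generators above collapses the tangency condition to $Y^{b}\,Z_{b}^{j}g_{j,i}Z_{a}^{i}=0$ for all $a$, that is, $Y^{b}\,g(Z_{b},Z_{a})=0$.

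The crux---and the step I expect to be the only genuine obstacle---is to see that the Gram matrix $\left(g(Z_{a},Z_{b})\right)$ is nonsingular. This is where the mechanical hypothesis enters decisively: since $g$ is Riemannian (hence positive-definite) and the $Z_{a}$ are linearly independent, the Gram matrix is positive-definite, forcing $Y^{b}=0$ and hence $\orthL{\mathfrak{D}^{l}} \cap T M = \{0\}$. This is precisely why regularity is automatic here, in contrast with the general constrained Lagrangian case, where the matrix $\left(\mathcal{C}_{ab}\right)=\left(W^{ik}\Phi^{b}_{k}\Phi^{a}_{i}\right)$ of \eqref{lamatrizcdecarajo} must be \emph{assumed} regular; for a mechanical Lagrangian the Hessian $W$ is the metric itself, so $\mathcal{C}_{ab}$ reduces exactly to $g(Z_{a},Z_{b})$.

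Finally, for existence I would count dimensions. Since $\ann{\mathfrak{D}^{l}}$ has the $k$ generators $\tilde{\Phi}^{a}$, we get $\operatorname{rank}\mathfrak{D}^{l}=2n+1-k$, while $\dim\left(\mathcal{M}\times\mathbb{R}\right)=(2n-k)+1=2n+1-k$. As $\operatorname{rank}\mathfrak{D}^{l}=\dim M$, Corollary \ref{equaldimensions34} promotes the uniqueness just obtained to existence; equivalently, the trivial intersection together with $\dim\orthL{\mathfrak{D}^{l}}+\dim T\left(\mathcal{M}\times\mathbb{R}\right)=k+(2n+1-k)=2n+1$ yields the Whitney decomposition $\orthL{\mathfrak{D}^{l}} \oplus T\left(\mathcal{M}\times\mathbb{R}\right)=T_{\mathcal{M}\times\mathbb{R}}\left(T^{*}Q\times\mathbb{R}\right)$ along $M$.
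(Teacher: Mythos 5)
Your proposal is correct and follows essentially the same route as the paper: compute the tangency condition for a section $Y=Y^{b}X_{b}$ of $\orthL{\mathfrak{D}^{l}}$, reduce it to $Y^{b}g(Z_{b},Z_{a})=0$, conclude $Y^{b}=0$, and then obtain existence from the dimension count $k+(2n+1-k)=2n+1$. The only difference is that you make explicit the reason the Gram matrix $\left(g(Z_{a},Z_{b})\right)$ is invertible (positive-definiteness of the Riemannian metric applied to the linearly independent $Z_{a}$), a step the paper leaves implicit; this is a worthwhile clarification, not a deviation.
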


\noindent{Thus, we may consider these two projectors
\begin{subequations}\label{projectors2}
\begin{align}
\mathcal{P}: T_{\mathcal{M} \times \RR}\left( T^{*}Q \times \RR \right) &\rightarrow T \left(  \mathcal{M} \times \RR \right),\\
\mathcal{Q} :  T_{\mathcal{M} \times \RR}\left( T^{*}Q \times \RR \right) &\rightarrow \orthL{\mathfrak{D}^{l}}.
\end{align}
\end{subequations}
and the Hamiltonian constrained vector field $X_{H,\mathcal{M}}$ is obtained by projecting the Hamiltonian vector field $X_{H}$ with $\mathcal{P}$, i.e., $\mathcal{P} \left( {X_{H}}_{\vert \mathcal{M} \times\RR} \right) = X_{H , \mathcal{M}}$ is the solution of \cref{6.3}. Furthermore, an explicit (local) expression of the solution $X_{H, \mathcal{M}}$ is given by
\begin{equation}\label{20}
X_{H, \mathcal{M}} = \left(X_{H}\right)_{\vert \mathcal{M} \times \RR} -  X_{H}\left( Z_{b} \right)\mathcal{C}^{ba}X_{a}
\end{equation}

Recall that, by applying $\flat_{Q}$, we have a splitting on the cotangent bundle,
$$ T^{*}_{\mathcal{M} \times \RR}\left( T^{*}Q \times \RR \right) = \overline{T \left(\mathcal{M} \times \RR \right)} \oplus \ann{\mathfrak{D}^{l}},$$
in such a way that $\overline{T \left(\mathcal{M} \times \RR \right)} = \flat_{Q}\left( T \left(\mathcal{M} \times \RR \right)\right)$ and $T^{*}_{\mathcal{M} \times \RR} \left( T^{*}Q \times \RR \right)$ are the $1-$forms on $T^{*}Q \times \RR$ at points of $\mathcal{M} \times \RR$. Then, for all $a_{\left( \alpha_{q},z\right)} \in T^{*}_{\left( \alpha_{q},z\right)} \left( T^{*}Q \times \RR \right)$ with $\left( \alpha_{q},z\right) \in \mathcal{M} \times \RR$, the associated projections $\overline{\mathcal{Q}}: T^{*}_{\mathcal{M} \times \RR}\left( T^{*}Q \times \RR \right) \rightarrow \ann{\mathfrak{D}^{l}}$ and $\overline{\mathcal{P}}: T^{*}_{\mathcal{M} \times \RR}\left( T^{*}Q \times \RR \right) \rightarrow \overline{T \left(\mathcal{M} \times \RR \right)}$ are given by
\begin{itemize}
\item $\overline{\mathcal{Q}} \left( a_{\left( \alpha_{q},z\right)}\right) = \flat_{Q}\left(\mathcal{Q}\left(\sharp_{Q} \left( a_{\left( \alpha_{q},z\right)}\right)\right)\right)$.

\item $\overline{\mathcal{P}} \left( a_{\left( \alpha_{q},z\right)}\right) = \flat_{Q}\left(\mathcal{P}\left(\sharp_{Q} \left( a_{\left( \alpha_{q},z\right)}\right)\right)\right)$.
\end{itemize}
Furthermore, we have the following results, as corollaries of the correspondent ones in Section \ref{genframework24}.
\begin{theorem}\label{21}
Let $X$ be a vector field on $T^{*}Q \times \RR$. Then $X$ satisfies \cref{6.3}, if, and only if,
\begin{equation}
    \begin{dcases}
        \flat_{Q}\left( X \right) = \overline{\mathcal{P}} \left(  \dd H + \left(H + \Reeb_Q\left(H\right)\right)\eta_Q\right) \\
        X_{\vert \mathcal{M} \times \RR} \in  \mathfrak{X} \left(\mathcal{M} \times \RR \right).
    \end{dcases}
\end{equation}
\end{theorem}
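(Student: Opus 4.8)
The plan is to obtain \cref{21} as the verbatim specialization of the general \cref{21.2} to the constrained Hamiltonian system $\left(T^{*}Q\times\RR,\ \eta_Q,\ H,\ \mathcal{M}\times\RR,\ \mathfrak{D}^{l}\right)$ of \cref{definitionconstraind23}. First I would fix the dictionary with the abstract framework of \cref{genframework24}, taking $P=T^{*}Q\times\RR$, $\eta=\eta_Q$, $\flat=\flat_Q$, $\sharp=\sharp_Q$, $\Reeb=\Reeb_Q$, constraint submanifold $M=\mathcal{M}\times\RR$ and force distribution $F=\mathfrak{D}^{l}$. Under this identification the first line of \cref{6.3} is literally the first line of \cref{6.4}, the second line is the tangency condition $X_{\vert M}\in\mathfrak{X}(M)$, and the projection $\overline{\mathcal{P}}(a)=\flat_Q(\mathcal{P}(\sharp_Q(a)))$ introduced just above is exactly the specialization of the projection $\overline{\mathcal{P}}=\flat\circ\mathcal{P}\circ\sharp$ used to state \cref{21.2}.

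Next I would confirm that the standing hypotheses of the abstract result---existence and uniqueness of solutions---hold here, so that the splitting $T^{*}_{M}P=\overline{TM}\oplus\ann{F}$ and the projections $\overline{\mathcal{P}},\overline{\mathcal{Q}}$ are available. This is precisely the existence-and-uniqueness statement established above for the Hamiltonian Herglotz equations, which gives $\orthL{\mathfrak{D}^{l}}\oplus T(\mathcal{M}\times\RR)=T_{\mathcal{M}\times\RR}(T^{*}Q\times\RR)$; in particular $\orthL{F}\cap TM=\{0\}$ (uniqueness, \cref{GeneralizedYoootraproposiciondelcarajomas}) and $\operatorname{rank}F=\dim M$ (existence, \cref{equaldimensions34}).

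The step that genuinely requires attention is to verify that the right-hand side $\dd H+(H+\Reeb_Q(H))\eta_Q$ actually lies in the domain $\overline{TM}\oplus\ann{\mathfrak{D}^{l}}=\ann{(\orth{TM}\cap F)}$ of $\overline{\mathcal{P}}$; note that the remark following \cref{21.2} only records the projectability of $\dd H-(H+\Reeb_Q(H))\eta_Q$, so the displayed combination must be treated on its own. I would handle it summand by summand. Since $\Reeb_Q=\partial/\partial z$ is tangent to $\mathcal{M}\times\RR$ (because of the $\RR$ factor), the distribution $TM$ is vertical in the sense of \cref{def:subspace_position}; hence \cref{37.e} applies and the existence just recorded yields $\dd H\in\ann{(\orth{TM}\cap F)}$. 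For the $\eta_Q$ summand I would use \cref{lem:rl_complement}: verticality of $TM$ forces $\orth{TM}$ to be horizontal, i.e. $\orth{TM}\subseteq\ker\eta_Q$, so that $\orth{TM}\cap F\subseteq\ker\eta_Q$ and therefore $\eta_Q\in\ann{(\orth{TM}\cap F)}$. Because both $\dd H$ and $\eta_Q$ annihilate $\orth{TM}\cap F$, so does every combination $\dd H\pm(H+\Reeb_Q(H))\eta_Q$ with scalar coefficient, and in particular the one appearing in the statement.

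With the dictionary fixed and the domain check settled, the conclusion is immediate: applying \cref{21.2} to $\left(T^{*}Q\times\RR,\eta_Q,H,\mathcal{M}\times\RR,\mathfrak{D}^{l}\right)$ shows that $X$ solves \cref{6.3} if and only if $X_{\vert\mathcal{M}\times\RR}\in\mathfrak{X}(\mathcal{M}\times\RR)$ and $\flat_Q(X)=\overline{\mathcal{P}}\!\left(\dd H+(H+\Reeb_Q(H))\eta_Q\right)$, which is exactly the asserted identity with the displayed sign. I expect the only real obstacle to be the bookkeeping above---checking that $\flat_Q,\sharp_Q,\eta_Q,\Reeb_Q,\mathcal{P},\overline{\mathcal{P}}$ coincide with the abstract objects of \cref{genframework24} and that the plus-sign right-hand side is projectable---after which \cref{21.2} is invoked with no computation beyond the translation of notation.
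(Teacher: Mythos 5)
Your proposal is correct and takes the paper's own route: the paper gives no separate proof of this theorem, presenting it merely ``as corollaries of the correspondent ones in Section \ref{genframework24}'', i.e.\ exactly your specialization of \cref{21.2} to the system $\left(T^{*}Q\times\RR,\eta_Q,H,\mathcal{M}\times\RR,\mathfrak{D}^{l}\right)$, and your added checks (the dictionary, the existence/uniqueness hypotheses via \cref{GeneralizedYoootraproposiciondelcarajomas} and \cref{equaldimensions34}, and the summandwise verification that the right-hand side lies in the domain $\overline{TM}\oplus\ann{\mathfrak{D}^{l}}$ of $\overline{\mathcal{P}}$) supply diligence the paper omits. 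One caveat worth recording: the plus sign in front of $\left(H+\Reeb_Q\left(H\right)\right)\eta_Q$ is almost certainly a typo inherited from \cref{21.2} --- unwinding \cref{6.4} directly, using $\overline{\mathcal{P}}\left(\flat_Q\left(X\right)\right)=\flat_Q\left(X\right)$ for $X$ tangent to $\mathcal{M}\times\RR$ and $\overline{\mathcal{P}}\left(\eta_Q\right)=\eta_Q$ (since $\Reeb_Q$ is tangent to $\mathcal{M}\times\RR$), yields $\flat_Q\left(X\right)=\overline{\mathcal{P}}\left(\dd H-\left(H+\Reeb_Q\left(H\right)\right)\eta_Q\right)$, and the two candidate right-hand sides differ by $2\left(H+\Reeb_Q\left(H\right)\right)\eta_Q$ --- so your citation of \cref{21.2} verbatim makes your derivation exactly as valid as the paper's, and your projectability argument covers either sign, but an independent verification of the displayed identity would fail unless the sign is corrected to minus.
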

\begin{theorem}\label{25}
A vector field $X$ on $T^{*}Q \times \mathbb{R}$ satisfies
$$\flat_{Q}\left(X\right) - \dd H + \left(H + \Reeb_{Q}\left(H\right)\right)\eta_Q \in \ann{\mathfrak{D}^{l}}$$
if and only if
\begin{equation}
    \begin{dcases}\label{26}
    \mathcal{L}_{X}\eta_{Q} + \Reeb_{Q}\left(H\right) \eta_{Q} \in \ann{\mathfrak{D}^{l}} \\
        \eta_{Q} \left(  X\right) = - H.
    \end{dcases},
\end{equation}

\end{theorem}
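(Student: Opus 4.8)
The plan is to recognize this statement as the Hamiltonian instance of \cref{25.2}: the ambient contact manifold is $T^{*}Q \times \RR$, the contact form is $\eta_Q$, the Hamiltonian is $H$, and the force distribution is $F = \mathfrak{D}^{l}$. Thus the cleanest route is to verify the single hypothesis required by \cref{25.2}, namely that the Reeb field $\Reeb_Q$ be tangent to $F$, and then invoke that theorem directly. This verification is immediate from the coordinate description already established: $\ann{\mathfrak{D}^{l}}$ is spanned by the forms $\tilde{\Phi}^{a} = \Phi^{a}_{i}\,\dd q^{i}$, which carry no $\dd z$ component, whereas $\Reeb_Q = \pdv{}{z}$; hence $\tilde{\Phi}^{a}(\Reeb_Q) = 0$ for every $a$, so $\Reeb_Q \in \ann{(\ann{\mathfrak{D}^{l}})} = \mathfrak{D}^{l}$. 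In other words $\mathfrak{D}^{l}$ is vertical in the sense of \cref{def:subspace_position}, and the hypothesis of \cref{25.2} is met.

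For completeness I would also reprise the short computation underlying \cref{25.2} in the present notation. Writing $\flat_Q(X) = \contr{X}\dd\eta_Q + \eta_Q(X)\,\eta_Q$ and contracting the membership relation $\flat_Q(X) - \dd H + (H + \Reeb_Q(H))\eta_Q \in \ann{\mathfrak{D}^{l}}$ with $\Reeb_Q$ — which is legitimate precisely because $\Reeb_Q \in \mathfrak{D}^{l}$ annihilates every element of $\ann{\mathfrak{D}^{l}}$ — and using $\contr{\Reeb_Q}\dd\eta_Q = 0$, $\eta_Q(\Reeb_Q) = 1$, and $\contr{\Reeb_Q}\dd H = \Reeb_Q(H)$, one finds that all the Reeb-derivative terms cancel and there remains $\eta_Q(X) + H = 0$, which is exactly the second equation $\eta_Q(X) = -H$.

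Substituting $\eta_Q(X) = -H$ back yields $\flat_Q(X) = \contr{X}\dd\eta_Q - H\eta_Q$, so the membership relation collapses to $\contr{X}\dd\eta_Q - \dd H + \Reeb_Q(H)\eta_Q \in \ann{\mathfrak{D}^{l}}$. By Cartan's formula, $\lieD{X}\eta_Q = \contr{X}\dd\eta_Q + \dd(\eta_Q(X)) = \contr{X}\dd\eta_Q - \dd H$, and hence the relation is precisely $\lieD{X}\eta_Q + \Reeb_Q(H)\eta_Q \in \ann{\mathfrak{D}^{l}}$, the first line of \cref{26}. The converse follows by running these equivalences backwards. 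There is essentially no obstacle here: the only point that must be checked is the verticality of $\mathfrak{D}^{l}$, which is what makes the contraction by $\Reeb_Q$ produce the equation $\eta_Q(X) = -H$ and permits the implication to be reversed; once that is in place the statement is a direct corollary of \cref{25.2}.
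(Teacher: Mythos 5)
Your proposal is correct and follows essentially the same route as the paper: Theorem~\ref{25} is presented there as a direct corollary of Theorem~\ref{25.2} applied to $F=\mathfrak{D}^{l}$, with the verticality of $\mathfrak{D}^{l}$ (i.e.\ $\tilde{\Phi}^{a}(\Reeb_{Q})=0$) already noted, and the underlying computation is exactly the Reeb contraction followed by Cartan's formula that you reproduce. The only cosmetic remark is that verticality is needed for the forward implication (to extract $\eta_{Q}(X)=-H$), while the converse is a purely algebraic substitution that holds without it.
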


Notice that, by \cref{33}, we have that
\begin{eqnarray*}
\tilde{\Phi}_{a}\left( X_{H,\mathcal{M}}\right) &=& \tilde{\Phi}_{a}\left( {X_{H}}_{\vert \mathcal{M}\times \mathbb{R}}\right)\\
&=& \tilde{\Phi}_{a}\left(  \sharp_{Q}\left(\dd H - \left(H + \Reeb_Q\left(H\right)\right)\eta_Q \right)_{\vert \mathcal{M}\times \mathbb{R}}\right)\\
&=&\tilde{\Phi}_{a}\left(  \sharp_{Q}\left(\dd H \right)_{\vert \mathcal{M}\times \mathbb{R}}\right)
\end{eqnarray*}
Then, $X_{H,\mathcal{M}}- \sharp_{Q}\left(\dd H\right)_{\vert \mathcal{M}\times \mathbb{R}} \in \mathfrak{D}^{l}$. Hence, $X_{H,\mathcal{M}} \in \mathfrak{D}^{l}$ if, and only if, $\sharp_{Q}\left(\dd H\right)_{\vert \mathcal{M}\times \mathbb{R}} \in \mathfrak{D}^{l}$. On the other hand, for all $\left(\alpha_{q} , z\right) \in \mathcal{M} \times \mathbb{R}$, we have
\begin{eqnarray*}
\set{\tilde{\Phi}_{a}\left(  \sharp_{Q}\left(\dd H \right)_{\vert \mathcal{M}\times \mathbb{R}}\right)}\left(\alpha_{q} , z\right)
&=& \set{\Phi_{a}^{i} \frac{\partial H}{\partial p_{i}}}\left(\alpha_{q} , z\right)\\
&=& \set{\Phi^{i}_{a}g^{i,k}p_{k}}\left(\alpha_{q} , z\right)\\
&=& \Phi_{a}\left( \sharp_{g} \left( \alpha_{q}\right)\right)\\
&=&0
\end{eqnarray*}
Therefore,
\begin{equation}\label{24}
X_{H , \mathcal{M}} \in \mathfrak{D}^{l} \ \ \ \ \ \ , \ \ \ \ \ \dd H_{\vert \mathcal{M}\times \mathbb{R}} \in \flat \left( \mathfrak{D}^{l}\right) = \ann{\orth{\mathfrak{D}^{l}}}
\end{equation}
Namely, it is satisfied the mechanical condition (see Definition \ref{mechanicalcondit2343}). In other words, we have proved that $\left(H, T^{*}Q\times \RR, \mathcal{M}\times \RR, \mathfrak{D}^{l}\right)$ satisfies the \textit{mechanical condition} in Definition \ref{mechanicalcondit2343}, as well as, $\orthL{\mathfrak{D}^{l}} \subseteq \mathfrak{D}^{l}$, $\Reeb_{Q} \in \mathfrak{D}^{l}$, and conditions of existence and uniqueness of the solutions in Proposition \ref{GeneralizedYoootraproposiciondelcarajomas} and Corollary \ref{equaldimensions34}.

Notice that, the mechanical condition also implies that
$$ T \pi_{Q,\mathbb{R}}\left( X_{H , \mathcal{M}} \right) \in \mathcal{D}.$$

As a consequence of satisfying these conditions, we still may consider another projection on $T_{\mathcal{M}\times \mathbb{R}}\left(T^{*}Q \times \mathbb{R}\right)$ by the splitting,
$$T\left( \mathcal{M} \times \mathbb{R} \right) \cap \mathfrak{D}^{l} \oplus \left[T\left( \mathcal{M} \times \mathbb{R} \right) \cap \mathfrak{D}^{l}\right]^{\perp} = T_{\mathcal{M}\times \mathbb{R}}\left(T^{*}Q \times \mathbb{R}\right)$$
with the projections,
\begin{subequations}\label{otromasprojectors2cotg}
\begin{align}
\mathrm{P}: T_{\mathcal{M} \times \RR}\left( T^{*}Q \times \RR \right) &\rightarrow T\left( \mathcal{M} \times \mathbb{R} \right) \cap \mathfrak{D}^{l},\\
\mathrm{Q} :  T_{\mathcal{M} \times \RR}\left( T^{*}Q \times \RR \right) &\rightarrow \left[T\left( \mathcal{M} \times \mathbb{R} \right) \cap \mathfrak{D}^{l}\right]^{\perp}.
\end{align}
\end{subequations}
Therefore,
$$\mathrm{P} \left( {X_{H}}_{\vert \mathcal{M} \times\RR} \right) = \mathcal{P} \left( {X_{H}}_{\vert \mathcal{M} \times\RR} \right) = X_{H , \mathcal{M}}$$
Observe that, in this case, also happens that $\Reeb_{Q}\in T\left( \mathcal{M} \times \mathbb{R} \right)$. Then,
$$\orthL{\left[T\left( \mathcal{M} \times \mathbb{R} \right) \cap \mathfrak{D}^{l}\right]} = \left[T\left( \mathcal{M} \times \mathbb{R} \right) \cap \mathfrak{D}^{l}\right]^{\perp}.$$
Finally, we may define a \textit{non-holonomic bracket} for the constrained Hamiltonian system $\left(H, T^{*}Q\times \RR, \mathcal{M}\times \RR, \mathfrak{D}^{l}\right)$ given in \cref{gennonholonbracketdef56}.

\begin{remark}
\rm{
Recall that, in this case, the vector fields $Z_{a}$, as fiberwise linear map over $T^{*}Q \times \mathbb{R}$, define locally $\mathcal{M}\times \RR$, and \begin{equation*}
X_{a} = \sharp_{Q}\left( \tilde{\Phi}^{a}\right) = - \left[Z_{a}^{j}g_{j,i}\right] \frac{\partial}{\partial p_{i}}
\end{equation*}
is a (local) basis of sections of the subbundle $\orthL{\mathfrak{D}^{l}}$. Then, a basis of $\orthL{TM}$, is given by
\begin{eqnarray*}
Y_{c} &=&    Z_{c}^{i}  \frac{\partial}{\partial q_{i}}  -  p_{l}\frac{\partial  Z_{c}^{l}}{\partial q^{i}}\frac{\partial}{\partial p_{i}}   + p_{i}  Z_{c}^{i}  \frac{\partial}{\partial z}
\end{eqnarray*}
Therefore, the matrix $\mathcal{G}$ has the following coefficients
\begin{equation}\label{coordinatessecproy354}
\mathcal{G}_{dc} = \tilde{\Phi}^{d} \left(Y_{c}\right)  =  Z_{d}^{j}g_{j,i}Z^{i}_{c} = X_{d}\left( Z_{c}\right) = -\mathcal{C}_{dc}
\end{equation}

So, $\orthL{\left(TM \cap F\right)}$ is locally generated by the family of (local) vector fields $\set{Y_{c},X_{a}}$, where
\begin{equation*}
X_{a} = \sharp \left( {\Psi}^{a}\right) = \Psi^{a}_{i}\frac{\partial}{\partial q^{i}} - \left[\Phi^{a}_{i} + \mu^{a}p_{i}\right] \frac{\partial}{\partial p_{i}} + \left[\mu^{a}+\Psi^{a}_{k}p_{k}\right]\frac{\partial}{\partial z}
\end{equation*}
is a (local) basis of sections of the subbundle $\orthL{F}$. Consider a (local) basis $\{ \theta_{i}\}$ of $T M \cap F$. Then, the restriction of any (local) vector field $Y \in \mathfrak{X}\left(P\right)$ to $M$ may be locally written as
\[ Y_{\vert M} = R^{i}\theta_{i} + \nu^{c}Y_{c} + \lambda^{a}X_{a}.\]
Then, by applying $\Psi^{d}$ ($\orthL{F} \subseteq F$), we get
$$\Psi^{d} \left( Y \right) = \nu^{c} \Psi^{d} \left(Y_{c}\right)$$
Let us denote by $\mathcal{G}$ to the matrix with coefficients,
$$ \mathcal{G}_{dc}= \Psi^{d} \left(Y_{c}\right).$$
Hence,
$$\nu^{c} = \Psi^{d} \left( Y \right)\mathcal{G}^{dc},$$
with $\mathcal{G}^{dc}$, the coefficients of the inverse matrix of $\mathcal{G}$. On the other hand, applying $\dd \varphi^{b}$ we have that
\[ \dd \varphi^{b} \left( Y \right) = Y \left( \varphi^{b}\right) = \nu^{c}Y_{c}\left( \varphi^{b}\right) + \lambda^{a} \mathcal{C}_{ba},\]
where $\mathcal{C}$ is the matrix with coefficients given by \cref{matrixcoeff34}. Therefore,
$$\lambda^{a} = \left[Y \left( \varphi^{b}\right) -\nu^{c}Y_{c}\left( \varphi^{b}\right)\right] \mathcal{C}^{ba},$$
where $\mathcal{C}^{ba}$ are the coefficients of the inverse matrix of $\mathcal{C}$.
Hence,
\begin{align*}
\mathrm{Q} \left( Y_{\vert M} \right) &=   \Psi^{d} \left( Y \right)\mathcal{G}^{dc}Y_{c} + \left[Y \left( \varphi^{b}\right) -\Psi^{d} \left( Y \right)\mathcal{G}^{dc}Y_{c}\left( \varphi^{b}\right)\right] \mathcal{C}^{ba}X_{a}.\\
\mathrm{P} \left( Y_{\vert M} \right) &= Y_{\vert M} -  \Psi^{d} \left( Y \right)\mathcal{G}^{dc}Y_{c} - \left[Y \left( \varphi^{b}\right) -\Psi^{d} \left( Y \right)\mathcal{G}^{dc}sY_{c}\left( \varphi^{b}\right)\right] \mathcal{C}^{ba}X_{a}.
\end{align*}

}
\end{remark}

Notice that, in this case, the Reeb vector field $\Reeb_{Q}$ is also tangent to $\mathcal{M} \times\RR$. Then, the nonholonomic bracket is defined along $\mathcal{M} \times \RR$ as follows
\begin{equation}\label{Hamilnonholonbracketdef56}
  \set{f,g}_{H,\mathcal{M}} = \Lambda_{H,\mathcal{M}}(\dd f, \dd g) - f \Reeb_{Q}(g) + g \Reeb_{Q}(f).
\end{equation}
where
$$
    \Lambda_{H,\mathcal{M}} = \mathcal{P}_* {\Lambda_Q}_{\vert \mathcal{M} \times \RR},
$$
$\Lambda_{Q}$ being the Jacobi structure associated to the canonical contact form $\eta_{Q}$. Then, along $\mathcal{M} \times \RR$, we have
  \begin{equation*}
   X_{H, \mathcal{M}} =
    {\sharp_{\Lambda_{H,\mathcal{M}}}}(\dd H) - H \Reeb_{Q},
  \end{equation*}
Recall that Theorem \ref{genesteteorema23} shows us that this bracket may be restricted to $\mathcal{M}\times \RR$, and provides the correct evolution of the observables by the formula
\begin{equation}
      X_{H,\mathcal{M}}(f) = \set{H,f}_{H,\mathcal{M}} - f \Reeb_{Q} (H).
    \end{equation}

Let us describe locally this bracket. Let $\left(q^{i}\right)$ be a local system of coordinates on $Q$. Then,
\begin{itemize}
        \item $\mathcal{P}^{*}\left( \dd q^{i}\right) = \dd q^{i}$
        \item $\mathcal{P}^{*}\left( \dd p_{i}\right) = \dd p_{i} +g_{m,i}Z^{m}_{a}\mathcal{C}^{ba} p_{l} \frac{\partial Z_{b}^{l}}{\partial q^{j}} \ \dd q^{j}   +Z^{j}_{b}g_{l,i}Z^{l}_{a}\mathcal{C}^{ba} \dd p_{j}$
            \item $\mathcal{P}^{*}\left( \dd z\right) = \dd z$
\end{itemize}
This implies that
\begin{itemize}
    \item $\sharp_{Q} \left(\mathcal{P}^{*}\left( \dd q^{i}\right)  \right) = - \frac{\partial}{\partial p_{i}} $
    \item 
    $\sharp_{Q} \left( \mathcal{P}^{*}\left( \dd p_{i}\right)  \right)  = \frac{\partial}{\partial q_{i}} +\left[ Z^{j}_{b}g_{l,i}Z^{l}_{a}\mathcal{C}^{ba}  - g_{m,i}Z^{m}_{a}\mathcal{C}^{ba} p_{l} \frac{\partial Z_{b}^{l}}{\partial q^{j}}\right]\frac{\partial}{\partial q_{j}} + \left[p_{i} + Z^{j}_{b}g_{l,i}Z^{l}_{a}\mathcal{C}^{ba}
     p_{j} \right]\frac{\partial}{\partial z}$
    \item $\sharp_{Q} \left( \mathcal{P}^{*}\left( \dd z\right)  \right) =  p_{i} \frac{\partial}{\partial p_{i}} + \frac{\partial}{\partial z}$
\end{itemize}
So,
\begin{itemize}
    \item $\Lambda_{H , \mathcal{M}}\left( \dd q^{i}, \dd q^{j}\right)  = \Lambda_{H , \mathcal{M}}\left( \dd z, \dd z\right) = \Lambda_{H , \mathcal{M}}\left( \dd q^{i}, \dd z\right) = \Lambda_{H , \mathcal{M}}\left( \dd p_{i}, \dd p_{j}\right) = 0$.
    \item $\Lambda_{H , \mathcal{M}}\left( \dd p_{i}, \dd q^{j}\right) = \delta_{i}^{j} + Z^{j}_{b}g_{l,i}Z^{l}_{a}\mathcal{C}^{ba}  - g_{m,i}Z^{m}_{a}\mathcal{C}^{ba} p_{l} \frac{\partial Z_{b}^{l}}{\partial q^{j}}$
    \item $\Lambda_{H , \mathcal{M}}\left( \dd p_{i}, \dd z\right) = p_{i} + p_{r}\left(Z^{r}_{b}g_{l,i}Z^{l}_{a}\mathcal{C}^{ba}  - g_{m,i}Z^{m}_{a}\mathcal{C}^{ba} p_{l} \frac{\partial Z_{b}^{l}}{\partial q^{r}}\right)$
\end{itemize}
Therefore, by taking into account that $\Reeb_{Q} = \frac{\partial}{\partial z}$, we have that the local expression of the nonholonomic bracket is given by
\begin{itemize}
    \item $\set{q^{i}, q^{j}}_{H,\mathcal{M}}  = \set{p^{i}, p^{j}}_{H,\mathcal{M}} = \set{z, z}_{H,\mathcal{M}} =0$.
    \item $\set{p_{i}, q^{j}}_{H,\mathcal{M}} = \delta_{i}^{j} + Z^{j}_{b}g_{l,i}Z^{l}_{a}\mathcal{C}^{ba}  - g_{m,i}Z^{m}_{a}\mathcal{C}^{ba} p_{l} \frac{\partial Z_{b}^{l}}{\partial q^{j}}$
    \item $ \set{p_{i}, z}_{H,\mathcal{M}} =  p_{r}\left(Z^{r}_{b}g_{l,i}Z^{l}_{a}\mathcal{C}^{ba}  - g_{m,i}Z^{m}_{a}\mathcal{C}^{ba} p_{l} \frac{\partial Z_{b}^{l}}{\partial q^{r}}\right)$
    \item $\set{q^{i}, z}_{H,\mathcal{M}} = -q^{i}$
\end{itemize}

\subsection{The contact Eden bracket}

We can also define another nonholonomic bracket, based in the projection derived from the Riemaniann metric.

Notice that,
$$TQ = \mathcal{D}\oplus \mathcal{D}^{\perp_{g}},$$
where $\mathcal{D}= \sharp_{Q}\left( \mathcal{M}\right)$. Then, by applying $\flat_{g}$, we have that
$$T^{*}Q = \mathcal{M}\oplus \ann{\mathcal{D}}$$
In particular,
$$T^{*}Q\times \mathbb{R} = \left(\mathcal{M}\times \mathbb{R}\right)\oplus \ann{\mathcal{D}}$$
Let us consider the projection $\gamma: T^{*}Q\times \mathbb{R} \rightarrow \mathcal{M}\times \mathbb{R}$. Then, it turns natural to define the following bracket of functions,
$$\set{f,g}_{E} = \set{f \circ \gamma,g\circ \gamma}_{\vert \mathcal{M}\times \mathbb{R}}$$
for all $f,g \in \mathcal{C}^{\infty}\left(\mathcal{M}\times \mathbb{R}\right)$. This bracket will be called \textbf{contact Eden bracket} following the spirit of \cite{eden1,eden2}.\\
Let $Z \in T_{\left( \alpha_{q} ,s \right)} \left(T^{*}Q \times \mathbb{R}\right)$, $\alpha_{q} \in \mathcal{M}$. Consider a curve $\left(\alpha \left( t\right), f \left( t \right) \right) \subseteq T^{*}Q\times \mathbb{R}$ in such a way that $\left(\dot{\alpha}\left( 0\right) , \dot{f}\left( 0 \right) \right)= Z$, $\alpha \left( 0 \right) = \alpha_{q}$ and $f\left( 0 \right) = s$. So, we may define
$$ \Lambda \left( t \right) = \left(\alpha \left( t\right), f \left( t \right) \right) - \gamma \left( \left(\alpha \left( t\right), f \left( t \right) \right) \right) \in \ann{\mathcal{D}}_{b\left( t \right)},$$
where $b \left( t \right) = \pi_{Q,\mathbb{R}}\left(\alpha \left( t\right), f \left( t \right) \right) = \pi_{Q,\mathbb{R}}\left( \gamma\left(\alpha \left( t\right), f \left( t \right) \right) \right)$. Notice that $\gamma$ leaves the coordinate on $\mathbb{R}$ invariant. 

Then,
\begin{equation}\label{constraintsnoh34}
\Lambda \left( t \right) = \lambda_{a}\left( t \right)\Phi^{a}\left( b\left( t \right) \right) =\lambda_{a}\left( t \right)\Phi^{a}_{i}\left( b\left( t \right) \right) dq^{i}_{\vert b \left( t \right)} .
\end{equation}
Observe that $\Lambda \left( 0 \right) = 0_{q}\in T^{*}_{q}Q$, which implies that $\lambda_{a}\left( 0\right) = 0$, for all $a$ (since $\set{\Phi^{a}}$ is a basis of $\ann{\mathcal{D}}$). Therefore, by \cref{constraintsnoh34}, we have in coordinates
\begin{eqnarray*}
\dot{\Lambda}\left( 0 \right) &=& \dot{b}\left( 0 \right)  \frac{\partial}{\partial {q_{i}}}_{\vert 0_{q}} +    \left[\dot{\alpha}^{i}\left(0\right) - \dot{\gamma\left(\alpha\right)}^{i}\left(0\right) \right] \frac{\partial}{\partial {p_{i}}}_{\vert 0_{q}}\\
&=& \dot{b}\left( 0 \right)  \frac{\partial}{\partial {q_{i}}}_{\vert 0_{q}} +    \dot{\lambda}_{a}\left( 0 \right)\Phi^{a}_{i}\left(  \alpha_{q} \right) \frac{\partial}{\partial {p_{i}}}_{\vert 0_{q}}
\end{eqnarray*}
Notice that we have used that $\lambda_{a}\left( 0\right) = 0$, for all $a$.\\
On the other hand, observe that
\begin{align*}
 Z  &= \dot{b}\left( 0 \right)  \frac{\partial}{\partial {q_{i}}}_{\vert \left( \alpha_{q} ,s \right)} +    \dot{\alpha}^{i}\left(0\right) \frac{\partial}{\partial {p_{i}}}_{\vert \left( \alpha_{q} ,s \right)}\\
T\gamma \left( Z \right) &= \dot{b}\left( 0 \right)  \frac{\partial}{\partial {q_{i}}}_{\vert \left( \alpha_{q} ,s \right)} +    \dot{\gamma\left(\alpha\right)}^{i}\left(0\right) \frac{\partial}{\partial {p_{i}}}_{\vert \left( \alpha_{q} ,s \right)}
\end{align*}
where $\alpha \left( t \right) = \alpha^{i}\left(t\right) \frac{\partial}{\partial {q_{i}}}_{\vert b\left( t \right)}$, and $\gamma\left(\alpha\right)\left( t \right) = \gamma\left(\alpha\right)^{i}\left(t\right) \frac{\partial}{\partial {q_{i}}}_{\vert b\left( t \right)}$. Thus,
\begin{eqnarray*}
    X= Z-T\gamma \left( Z \right) &=&  \left[\dot{\alpha}^{i}\left(t\right) - \dot{\gamma\left(\alpha\right)}^{i}\left(t\right) \right]\frac{\partial}{\partial {p_{i}}}_{\vert \left( \alpha_{q} ,s \right)}\\
    &=& \dot{\lambda}_{a}\left( 0 \right)\Phi^{a}_{i}\left(  \alpha_{q} \right) \frac{\partial}{\partial {p_{i}}}_{\vert \left( \alpha_{q} ,s \right)}
\end{eqnarray*}
Observe that $\eta_{Q}\left( X \right) = 0$. Therefore,
$$ \flat_{Q} \left( X \right) = \contr{X}\dd \eta_{Q} = - X\left( p_{i}\right) dq^{i}_{\vert \left( \alpha_{q} ,s \right)} = - \dot{\lambda}_{a}\left( 0 \right)\Phi^{a}_{i}\left(  \alpha_{q} \right)  dq^{i}_{\vert \left( \alpha_{q} ,s \right)}$$
In other words, $\flat_{Q} \left( X \right) \in  \ann{\mathfrak{D}^{l}}$ or, equivalently, $X \in \orthL{\mathfrak{D}^{l}}$. Thus, we will prove the following result
\begin{theorem}
Let be $\alpha_{q} \in \mathcal{M}$. Then,
$$ T \gamma\left( Z \right) \ = \ \mathcal{P}\left( Z \right),$$
for all $Z \in T_{ \left( \alpha_{q} ,s \right)} \left(T^{*}Q \times \mathbb{R}\right)$.
\begin{proof}
Take into account that $Z = T \gamma \left(Z \right) + X$, with $X= Z-T\gamma \left( Z \right) \in \orthL{\mathfrak{D}^{l}}$. Furhtermore, obviously $T \gamma \left(Z \right) \in T \left( \mathcal{M}\times \mathbb{R}\right)$. Then, by uniqueness, we obtain the result.  
\end{proof}
\end{theorem}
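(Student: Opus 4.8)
The plan is to read the result off directly from the direct-sum decomposition
$$T_{\mathcal{M}\times\RR}(T^{*}Q\times\RR) = \orthL{\mathfrak{D}^{l}} \oplus T(\mathcal{M}\times\RR)$$
established in \cref{41}, combined with the defining property of $\mathcal{P}$ as the projector onto $T(\mathcal{M}\times\RR)$ along $\orthL{\mathfrak{D}^{l}}$. Since this sum is direct, every $Z \in T_{(\alpha_q,s)}(T^{*}Q\times\RR)$ splits \emph{uniquely} as a vector tangent to $\mathcal{M}\times\RR$ plus a vector in $\orthL{\mathfrak{D}^{l}}$; the whole argument reduces to recognizing $T\gamma(Z)$ as the first of these two summands.

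First I would record the trivial observation that $T\gamma(Z) \in T(\mathcal{M}\times\RR)$: because $\gamma$ takes its values in $\mathcal{M}\times\RR$, the image of its tangent map necessarily lands in the tangent bundle of that submanifold. Next I would turn to the complementary piece $X := Z - T\gamma(Z)$ and argue that $X \in \orthL{\mathfrak{D}^{l}}$. This is exactly what the coordinate computation immediately preceding the statement delivers: writing $\Lambda(t) = (\alpha(t),f(t)) - \gamma(\alpha(t),f(t))$ and differentiating at $t=0$ yields $\flat_Q(X) = -\dot{\lambda}_a(0)\,\Phi^a_i(\alpha_q)\,\dd q^i$, which lies in $\ann{\mathfrak{D}^{l}}$, and this is equivalent to $X \in \orthL{\mathfrak{D}^{l}}$.

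With both facts secured, the identity $Z = T\gamma(Z) + X$ is precisely the unique decomposition of $Z$ relative to \cref{41}; since $\mathcal{P}$ selects the $T(\mathcal{M}\times\RR)$-component, uniqueness forces $\mathcal{P}(Z) = T\gamma(Z)$, which is the claim. The only substantive point is the verification that $X \in \orthL{\mathfrak{D}^{l}}$, and the hard part there --- already handled in the preceding computation --- is that $\Lambda(0) = 0_q$ forces the multipliers $\lambda_a(0)$ to vanish, so that only their derivatives $\dot{\lambda}_a(0)$ contribute and produce a purely ``vertical'' cotangent vector annihilating $\mathfrak{D}^{l}$. Once this is granted, no further obstacle remains: the theorem follows formally from the uniqueness of the splitting.
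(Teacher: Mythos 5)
Your proposal is correct and follows essentially the same route as the paper's own proof: it invokes the splitting $\orthL{\mathfrak{D}^{l}} \oplus T\left(\mathcal{M}\times \mathbb{R}\right) = T_{\mathcal{M}\times\mathbb{R}}\left(T^{*}Q\times\mathbb{R}\right)$, notes that $T\gamma\left(Z\right)$ is tangent to $\mathcal{M}\times\mathbb{R}$ while $Z - T\gamma\left(Z\right)$ lies in $\orthL{\mathfrak{D}^{l}}$ by the preceding coordinate computation, and concludes by uniqueness of the decomposition. Your write-up is somewhat more explicit about where each ingredient comes from, but the argument is the same.
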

Notice that, as an immediate corollary, we deduce
\begin{corollary}
$T\gamma \left( {X_{H}}_{\vert \mathcal{M} \times\RR} \right) = X_{H , \mathcal{M}}$
\end{corollary}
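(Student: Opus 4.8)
The plan is to read off this corollary directly from the preceding theorem, which identifies the geometric projection $T\gamma$ with the algebraic projector $\mathcal{P}$ on each fibre over $\mathcal{M}$. First I would fix an arbitrary point $\left(\alpha_q,s\right)\in\mathcal{M}\times\RR$ and apply the theorem to the particular tangent vector $Z = \left(X_H\right)_{\left(\alpha_q,s\right)}\in T_{\left(\alpha_q,s\right)}\left(T^{*}Q\times\RR\right)$. This is legitimate precisely because $\alpha_q\in\mathcal{M}$, which is exactly the standing hypothesis of the theorem. The theorem then gives $T\gamma\left(\left(X_H\right)_{\left(\alpha_q,s\right)}\right) = \mathcal{P}\left(\left(X_H\right)_{\left(\alpha_q,s\right)}\right)$, and since the point was arbitrary this is the fibrewise form of the identity $T\gamma\left(\left(X_H\right)_{\vert\mathcal{M}\times\RR}\right) = \mathcal{P}\left(\left(X_H\right)_{\vert\mathcal{M}\times\RR}\right)$ along all of $\mathcal{M}\times\RR$.

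Second I would invoke the identity established earlier in this section, namely $\mathcal{P}\left(\left(X_H\right)_{\vert\mathcal{M}\times\RR}\right) = X_{H,\mathcal{M}}$, which characterises the constrained Hamiltonian vector field as the $\mathcal{P}$-projection of the unconstrained one under the splitting $\orthL{\mathfrak{D}^{l}}\oplus T\left(\mathcal{M}\times\RR\right) = T_{\mathcal{M}\times\RR}\left(T^{*}Q\times\RR\right)$. Chaining the two equalities yields $T\gamma\left(\left(X_H\right)_{\vert\mathcal{M}\times\RR}\right) = X_{H,\mathcal{M}}$, which is the assertion.

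There is essentially no obstacle here: all the genuine content has already been spent, either in the theorem itself (the verification that $Z - T\gamma(Z)\in\orthL{\mathfrak{D}^{l}}$ and $T\gamma(Z)\in T\left(\mathcal{M}\times\RR\right)$, combined with uniqueness of the decomposition) or in the earlier existence-and-uniqueness discussion that makes the projector $\mathcal{P}$ well defined. The only point worth stating explicitly is that the theorem is phrased pointwise for a single tangent vector $Z$, so I would note that it applies to the restricted field $X_H$ fibre by fibre — which is automatic, since every point of $\mathcal{M}\times\RR$ has its cotangent component lying in $\mathcal{M}$ and hence satisfies the hypothesis $\alpha_q\in\mathcal{M}$.
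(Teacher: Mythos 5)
Your argument is correct and matches the paper's intent exactly: the paper states this as an immediate consequence of the preceding theorem (the pointwise identification $T\gamma = \mathcal{P}$ on fibres over $\mathcal{M}$) combined with the earlier identity $\mathcal{P}\left({X_{H}}_{\vert \mathcal{M}\times\RR}\right) = X_{H,\mathcal{M}}$, which is precisely the two-step chain you give. Your remark that the theorem applies fibre by fibre to the restricted vector field is the only detail worth making explicit, and you handle it correctly.
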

Recall that the adjoint operator of $\mathcal{P}$ is given by, $\mathcal{P}^* :T^{*}_{\mathcal{M} \times \RR} \left( T^{*}Q \times \RR \right) \rightarrow \ann{\orthL{\mathfrak{D}^{l}}}$, such that, for all $\alpha \in T_{\left(\alpha_{q} , s\right)}^{*} \left( T^{*}Q \times \RR \right) $ and $Z \in T_{\left(\alpha_{q} , s\right)} \left( T^{*}Q \times \RR \right)$, we have
\begin{eqnarray*}
  \mathcal{P}^{*}\alpha \left( Z \right) &=& \alpha \left( \mathcal{P}\left( Z\right)\right)\\
  &=& \alpha \left( T \gamma\left( Z\right)\right)\\
  &=& \set{T^{*}\gamma \left(\alpha\right)}  \left( Z\right)
\end{eqnarray*}
Hence, $T^{*}\gamma = \mathcal{P}^{*}$.
\begin{theorem}
The contact Eden bracket $\set{\cdot, \cdot}_{E}$ and the nonholonomic bracket $\set{\cdot, \cdot}_{H,\mathcal{M}}$ coincide, namely
$$\set{f,g}_{E} = \set{f \circ \gamma,g\circ \gamma}_{\vert \mathcal{M}\times \mathbb{R}}$$

\begin{proof}
Recall that the nonholonomic bracket was defined on functions over $\mathcal{M} \times \RR$ as follows
\begin{equation*}
  \set{f,g}_{H,\mathcal{M}} = \Lambda_{H,\mathcal{M}}(\dd f, \dd g) - f \Reeb_{H,\mathcal{M}}(g) + g \Reeb_{H,\mathcal{M}}(f).
\end{equation*}
where
\begin{align*}
    \Reeb_{H,\mathcal{M}} &=  \mathcal{P} \left({\Reeb_{Q}}_{\vert \mathcal{M} \times \RR}\right),\\
    \Lambda_{H,\mathcal{M}} &= \mathcal{P}_* {\Lambda_Q}_{\vert \mathcal{M} \times \RR},
\end{align*}
with $\Lambda_{Q}$ as the Jacobi structure associated to the contact form $\eta_{Q}$. So, for $f,g \in \mathcal{M}\times \mathbb{R}$, we have that
\begin{eqnarray*}
\Lambda_{H,\mathcal{M}}\left(\dd f, \dd g\right) &=& {\Lambda_{Q}}_{\vert \mathcal{M}\times \mathbb{R}}\left(\mathcal{P}^*\left(\dd f\right),\mathcal{P}^*\left(\dd g\right)\right)\\
&=& {\Lambda_{Q}}_{\vert \mathcal{M}\times \mathbb{R}}\left(T^{*}\gamma \left(\dd f\right),T^{*}\gamma \left(\dd g\right)\right)\\
&=& {\Lambda_{Q}}_{\vert \mathcal{M}\times \mathbb{R}}\left(\dd \left(f\circ \gamma\right),\dd \left(g\circ \gamma\right)\right)
\end{eqnarray*}
On the other hand,
$$\Reeb_{H,\mathcal{M}} \left( f \right)  =   \mathcal{P} \left({\Reeb_{Q}}_{\vert \mathcal{M} \times \RR} \right) \left( f \right) = \set{T \gamma  \left({\Reeb_{Q}}_{\vert \mathcal{M} \times \RR}\right)} \left( f \right) = {\Reeb_{Q}}_{\vert \mathcal{M} \times \RR}\left( f \circ \gamma\right)$$
Therefore,
\begin{eqnarray*}
 \set{f,g}_{H,\mathcal{M}}  &=& \Lambda_{H,\mathcal{M}}(\dd f, \dd g) - f \Reeb_{H,\mathcal{M}}(g) + g \Reeb_{H,\mathcal{M}}(f)\\
 &=& {\Lambda_{Q}}_{\vert \mathcal{M}\times \mathbb{R}}\left(\dd \left(f\circ \gamma\right),\dd \left(g\circ \gamma\right)\right) \\
 &&- f {\Reeb_{Q}}_{\vert \mathcal{M} \times \RR}\left( g \circ \gamma\right)+ g {\Reeb_{Q}}_{\vert \mathcal{M} \times \RR}\left( f \circ \gamma\right)\\
 &=& \set{f \circ \gamma,g\circ \gamma}_{\vert \mathcal{M}\times \mathbb{R}}
\end{eqnarray*}
Notice that, the last equality is a consequence of this fact $\left(f \circ \gamma \right)_{\vert \mathcal{M}\times \mathbb{R}} = f$. So,

$$\set{f,g}_{E} = \set{f \circ \gamma,g\circ \gamma}_{\vert \mathcal{M}\times \mathbb{R}}$$
\end{proof}
\end{theorem}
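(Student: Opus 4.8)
The plan is to reduce everything to the two identities established immediately before the statement, namely $T\gamma = \mathcal{P}$ on $T_{(\alpha_q,s)}(T^{*}Q\times\mathbb{R})$ for $\alpha_q\in\mathcal{M}$, and its adjoint $T^{*}\gamma = \mathcal{P}^{*}$. Once these are in hand, the coincidence of the two brackets is a direct substitution into the definition of $\set{\cdot,\cdot}_{H,\mathcal{M}}$, so I expect the argument to be short and the real geometric content to sit entirely in those two identities.

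First I would unwind the bivector term. By definition of the pushforward $\Lambda_{H,\mathcal{M}} = \mathcal{P}_{*}\,{\Lambda_Q}_{\vert\mathcal{M}\times\mathbb{R}}$, one has $\Lambda_{H,\mathcal{M}}(\dd f,\dd g) = {\Lambda_Q}_{\vert\mathcal{M}\times\mathbb{R}}(\mathcal{P}^{*}\dd f,\mathcal{P}^{*}\dd g)$. Substituting $\mathcal{P}^{*} = T^{*}\gamma$ and using that pullback of a function's differential is the differential of the composite, i.e. $T^{*}\gamma(\dd f) = \dd(f\circ\gamma)$, this becomes ${\Lambda_Q}_{\vert\mathcal{M}\times\mathbb{R}}(\dd(f\circ\gamma),\dd(g\circ\gamma))$. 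For the Reeb term I would use that $\Reeb_Q$ is tangent to $\mathcal{M}\times\mathbb{R}$, so $\Reeb_{H,\mathcal{M}} = \mathcal{P}(\Reeb_Q) = \Reeb_Q$, and then apply $\mathcal{P} = T\gamma$ to get $\Reeb_{H,\mathcal{M}}(f) = (T\gamma\,\Reeb_Q)(f) = \Reeb_Q(f\circ\gamma)$. Assembling the two pieces gives, along $\mathcal{M}\times\mathbb{R}$,
\[
\set{f,g}_{H,\mathcal{M}} = {\Lambda_Q}_{\vert\mathcal{M}\times\mathbb{R}}(\dd(f\circ\gamma),\dd(g\circ\gamma)) - f\,\Reeb_Q(g\circ\gamma) + g\,\Reeb_Q(f\circ\gamma),
\]
which is precisely the ambient Jacobi bracket $\set{f\circ\gamma,\,g\circ\gamma}$ of the contact structure $\eta_Q$, evaluated on $\mathcal{M}\times\mathbb{R}$.

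The only delicate point — and the place where the defining property of $\gamma$ is invoked — is the final matching of the outer scalar factors: in the ambient bracket the coefficients of the $\Reeb_Q$ terms are $f\circ\gamma$ and $g\circ\gamma$, whereas above they are the bare $f,g$. Since $\gamma$ is a projection onto $\mathcal{M}\times\mathbb{R}$, one has $(f\circ\gamma)_{\vert\mathcal{M}\times\mathbb{R}} = f$ and $(g\circ\gamma)_{\vert\mathcal{M}\times\mathbb{R}} = g$, so the two expressions agree when restricted to $\mathcal{M}\times\mathbb{R}$. This identifies the display with $\set{f\circ\gamma,\,g\circ\gamma}_{\vert\mathcal{M}\times\mathbb{R}}$, which is by definition $\set{f,g}_{E}$, completing the proof. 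I do not anticipate a genuine obstacle, since the substantive step was already carried out in proving $T\gamma = \mathcal{P}$, which itself rested on the uniqueness of the splitting $\orthL{\mathfrak{D}^{l}}\oplus T(\mathcal{M}\times\mathbb{R})$ together with the fact that $Z - T\gamma(Z)\in\orthL{\mathfrak{D}^{l}}$.
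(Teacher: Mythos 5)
Your proposal is correct and follows essentially the same route as the paper: both arguments reduce the claim to the identities $\mathcal{P}=T\gamma$ and $\mathcal{P}^{*}=T^{*}\gamma$ established just beforehand, substitute them into the definitions of $\Lambda_{H,\mathcal{M}}$ and $\Reeb_{H,\mathcal{M}}$, and conclude via $\left(f\circ\gamma\right)_{\vert \mathcal{M}\times\mathbb{R}}=f$. Your explicit flagging of the mismatch between the scalar coefficients $f,g$ and $f\circ\gamma, g\circ\gamma$ in the Reeb terms is a slightly more careful rendering of the step the paper dispatches in one line, but it is the same argument.
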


At this point, it is important to note that we have restricted our results to a Hamiltonian constrained systems induced by a Lagrangian of mechanical type. However, we actually do not need the existence of a Riemannian metric to construct the Eden bracket, and to prove the associated results. In fact, it is enough to consider a constrained Hamiltonian systems given by the quintuple $\left(  T^{*}Q \times \mathbb{R}, \eta_{Q}, H, \mathcal{M}\times \mathbb{R} , \mathfrak{D}^{l}\right)$, where $\mathcal{M}$ is a subbundle of the cotangent bundle $T^{*}Q$ in such a way that
$$T^{*}Q = \mathcal{M}\oplus \ann{\mathcal{D}}$$
So, in general, we may easily generalize the Eden bracket, for the case in which our constrained Hamiltonian systems is generated by a splitting of the cotangent bundle,
$$T^{*}Q = \mathcal{M}\oplus \mathcal{N}$$
where $\mathcal{M}$ and $\mathcal{N}$ are subbundles of $T^{*}Q$. Then, the constrained Hamiltonian system is defined by the quadruples $\left( T^{*}Q \times \mathbb{R}, \eta_Q, H, \mathcal{M}\times \mathbb{R} , \mathfrak{N}^{l}\right)$, where
$$\left( \pi_{Q,\mathbb{R}}\right)^{*} \mathcal{N} = \ann{\mathfrak{N}^{l}}$$

\section{Conclusions and further work}

In this paper, we have presented a general framework of contact non-holonomic dynamics in which we have studied conditions of uniqueness and existence of solutions. Furthermore, we have been able to define two equivalent non-holonomic brackets describing the dynamics of the system. We have proved that the known case of constraint dynamics for Lagrangian systems studied in \cite{primero} can be obtained as a particular case of this framework. We have also present the Hamiltonian counterpart, which is another a particular case of this framework. Finally, in this context, we have been able to describe another version of the non-holonomic bracket, which is the so-called contact Eden bracket, which turns out to be equivalent to the two already described before.

These are some of the subjects on which we wish to work in the near future and which would be a natural continuation of the results contained in this paper: 

\begin{enumerate}

\item In a future paper we will extend the reduction procedure developed in \cite{cantrijn1999reduction,CoLe99} for contact nonholonomic mechanical systems.

\item We want to develop numerical integrators for this type of systems, extending the results of \cite{Jorge,aitor}.

\item We will develop a noholonomic moment map in this case and the relationship with the noholonomic bracket \cite{cantrijn1999reduction}.
    
\end{enumerate}

\section*{Acknowledgments}

M. de Le\'on and V. M. Jiménez acknowledge financial support from the Spanish Ministry of Science and Innovation, under grants Grant PID2019-106715GB-C21 and PID2022-137909NB-C2 and the Severo Ochoa Programme for Centres of Excellence in R$\&$D” (CEX2019-000904-S). V. M. Jiménez acknowledge the finantial support from MIU and European Union-\textit{NextGenerationUE}.


\bibliographystyle{plain}

\bibliography{Library}

\begin{thebibliography}{10}

\bibitem{Abraham1978}
R.~Abraham and J.~E. Marsden.
\newblock {\em Foundations of Mechanics}.
\newblock AMS Chelsea publishing. AMS Chelsea Pub./American Mathematical
  Society, Providence, 2008.

\bibitem{Arnold}
V.~I. Arnold.
\newblock {\em Mathematical Methods of Classical Mechanics}.
\newblock Graduate Texts in Mathematics. Springer, New York, 2013.

\bibitem{BatesSnia}
L.~Bates and J.~{\'S}niatycki.
\newblock Nonholonomic reduction.
\newblock {\em Reports on Mathematical Physics}, 32(1):99--115, 1993.

\bibitem{BLOCH1}
A.~Bloch.
\newblock {\em Nonholonomic Mechanics and Control Systems and Control}.
\newblock Interdisciplinary Applied Mathematics, 24. Springer-Verlag, New York,
  2003.

\bibitem{BLMARSKRI}
A.~Bloch, P.~S. Krishnaprasad, J.~E. Marsden, and R.~Murray.
\newblock Nonholonomic mechanical systems with symmetry.
\newblock {\em Archive for Rational Mechanics and Analysis}, 136(1):21--99,
  1996.

\bibitem{Borisov2002}
A.~Borisov and I.~Mamaev.
\newblock On the history of the development of the nonholonomic dynamics.
\newblock {\em Regular and Chaotic Dynamics}, 7, 03 2005.

\bibitem{mamaev}
A.V. Borisov, I.S. Mamaev, and I.A. Bizyaev.
\newblock Historical and critical review of the development of nonholonomic
  mechanics: the classical period.
\newblock {\em Regul. Chaotic Dyn.}, 21(4):455–476, 2016.

\bibitem{Bravetti2017}
A.~Bravetti.
\newblock Contact {H}amiltonian dynamics: The concept and its use.
\newblock {\em Entropy}, 19(10):12 pg., 2017.

\bibitem{Bravetti2018a}
A.~Bravetti.
\newblock Contact geometry and thermodynamics.
\newblock {\em International Journal of Geometric Methods in Modern Physics},
  16(supp01):1940003, 2019.

\bibitem{CaLeDi99}
F.~Cantrijn, M.~{de Le{\'o}n}, and D.~Mart\'{\i}n de~Diego.
\newblock On almost-{P}oisson structures in nonholonomic mechanics.
\newblock {\em Nonlinearity}, 12(3):721, 1999.

\bibitem{cantrijn1999reduction}
F.~Cantrijn, M.~{de Le{\'o}n}, J.~C. Marrero, and D.~Mart\'{\i}n de~Diego.
\newblock Reduction of constrained systems with symmetries.
\newblock {\em Journal of Mathematical Physics}, 40(2):795--820, 1999.

\bibitem{CoLe99}
J.~Cort{\'e}s and M.~{de Le{\'o}n}.
\newblock Reduction and reconstruction of the dynamics of nonholonomic systems.
\newblock {\em J. Phys. A}, 32(49):8615–8645, 1999.

\bibitem{aitor}
M.~de~León, D.~Martín de~Diego, and A.~Santamaría-Merino.
\newblock Geometric numerical integration of nonholonomic systems and optimal
  control problems.
\newblock {\em Eur. J. Control}, 10(5):515–521, 2004.

\bibitem{primero}
M.~de~León, V.~M. Jiménez, and M.~Lainz Valcázar.
\newblock Contact {H}amiltonian and {L}agrangian systems with nonholonomic
  constraints.
\newblock {\em Journal of Geometric Mechanics}, 13(1):25--53, 2021.

\bibitem{deLeon2019a}
M.~de~León and M.~Lainz.
\newblock Contact {H}amiltonian systems.
\newblock {\em Journal of Mathematical Physics}, 60(10), 10 2019.
\newblock 102902.

\bibitem{canario}
M.~de~León and M.~Lainz.
\newblock A review on contact {H}amiltonian and {L}agrangian systems.
\newblock {\em Rev.Acad.Canaria de Ciencias}, XXXI:1–46, 2019.

\bibitem{deLeon2020b}
M.~{de León} and M.~{Lainz}.
\newblock Infinitesimal symmetries in contact {H}amiltonian systems.
\newblock {\em Journal of Geometry and Physics}, 153:103651, 2020.

\bibitem{deleonetal}
M.~de~León, M.~Lainz, A.~López-Gordón, and J.~C. Marrero.
\newblock A new perspective on nonholonomic brackets and {H}amilton-jacobi
  theory.
\newblock {\em Journal of Geometry and Physics}, 198(105116), 2024.

\bibitem{cristina}
M.~de~León and C.~Sardón.
\newblock Cosymplectic and contact structures to resolve time-dependent and
  dissipative {H}amiltonian systems.
\newblock {\em J. Phys. A: Math. Theor.}, 50(25):255205, 23 pg., 2017.

\bibitem{deLeonRACSAM}
M.~{de Le{\'o}n}.
\newblock A historical review on nonholomic mechanics.
\newblock {\em Rev. R. Acad. Cienc. Exactas F{\'i}s. Nat. Ser. A Math. RACSAM},
  106:191--224, 2012.

\bibitem{deLeon1996d}
M.~{de Le{\'o}n} and D.~Mart\'{\i}n de~Diego.
\newblock On the geometry of non‐holonomic {L}agrangian systems.
\newblock {\em Journal of Mathematical Physics}, 37(7):3389--3414, 1996.

\bibitem{deLeon2019}
M.~de~Le\'{o}n and M.~Lainz.
\newblock Singular {L}agrangians and precontact {H}amiltonian systems.
\newblock {\em International Journal of Geometric Methods in Modern Physics},
  16(10):1950158, 2019.

\bibitem{deLeon2011}
M.~{de Le{\'o}n} and P.~R. Rodrigues.
\newblock {\em Methods of Differential Geometry in Analytical Mechanics}.
\newblock North-Holland Mathematics Studies. Elsevier Science, Amsterdam, 2011.

\bibitem{eden1}
R.J. Eden.
\newblock The {H}amiltonian dynamics of non-holonomic systems.
\newblock {\em Proc. R. Soc. Lond. A}, 205(1083):564–583, 1951.

\bibitem{eden2}
R.J. Eden.
\newblock The quantum mechanics of non-holonomic systems.
\newblock {\em Proc. R. Soc. Lond. A}, 205(1083):583–595, 1951.

\bibitem{Georgieva2011}
B.~Georgieva.
\newblock The {{variational principle}} of {{Hergloz}} and {{related
  resultst}}.
\newblock {\em Geometry, Integrability and Quantization, Avangard Prima}, pages
  214--225, 2011.

\bibitem{Georgieva2003}
B.~Georgieva, R.~Guenther, and T.~Bodurov.
\newblock Generalized variational principle of {{Herglotz}} for several
  independent variables. first noether-type theorem.
\newblock {\em Journal of Mathematical Physics}, 44(9):3911--3927, 2003.

\bibitem{godbillon}
C.~Godbillon.
\newblock {\em Géométrie différentielle et mécanique analytique}.
\newblock Hermann, 1969.

\bibitem{Herglotz1930}
G.~Herglotz.
\newblock Beruhrungstransformationen.
\newblock {\em Lectures at the {{University}} of {{Gottingen}}}, 1930.

\bibitem{Jorge}
J.Cortés and S.~Martínez.
\newblock Non-holonomic integrators.
\newblock {\em Nonlinearity}, 14(5):1365–1392, 2001.

\bibitem{Kirillov1976}
A.~A. Kirillov.
\newblock Local lie algebras.
\newblock {\em Russian Mathematical Surveys}, 31(4):55, aug 1976.

\bibitem{Koiller1992}
J.~Koiller.
\newblock Reduction of some classical nonholonomic systems with symmetry.
\newblock {\em Arch. Rational Mech. Anal.}, 118:113--148, 1992.

\bibitem{JMS}
J.~M. Lee.
\newblock {\em Introduction to Smooth Manifolds}.
\newblock Graduate Texts in Mathematics. Springer, 2012.

\bibitem{Libermann1987}
P.~Libermann and C.~M. Marle.
\newblock {\em Symplectic Geometry and Analytical Mechanics}.
\newblock Mathematics and Its Applications. Springer Netherlands, Dordrecht,
  1987.

\bibitem{Lichnerowicz1978}
A.~Lichnerowicz.
\newblock Les vari\'et\'es de {{Jacobi}} et leurs alg\`ebres de {{Lie}}
  associ\'ees.
\newblock {\em J. Math. Pures Appl.}, 57(9):453--488, 1978.

\bibitem{Neimark1972}
J.~I. Neimark and N.~A. Fufaev.
\newblock {\em Dynamics of Nonholonomic Systems}.
\newblock Translations of mathematical monographs. American Mathematical
  Society, Providence, 1972.

\bibitem{Vaisman1994}
I.~Vaisman.
\newblock {\em Lectures on the Geometry of {P}oisson Manifolds}.
\newblock Progress in Mathematics, 118. Birkh\"auser, Basel, 1994.

\bibitem{VdSM}
A.J. {van der Schaft} and B.M. Maschke.
\newblock On the {H}amiltonian formulation of nonholonomic mechanical systems.
\newblock {\em Rep. Math. Phys.}, 34(2):225–233, 1994.

\bibitem{Vershik1972}
A.~M. {Vershik} and L.~D. {Faddev}.
\newblock Differential geometry and {L}agrangian mechanics with constraints.
\newblock {\em Soviet Physics Doklady}, 17:34, jul 1972.

\end{thebibliography}

\end{document}